\newtheorem{theorem}{Theorem}[section]
\newtheorem{lemma}[theorem]{Lemma}
\newcommand{\cB}{\mathcal{B}}
\newcommand{\cR}{\mathcal{R}}
\newcommand{\C}{\mathbb{C}}
\newcommand{\R}{\mathbb{R}}
\newcommand{\Z}{\mathbb{Z}}
\newcommand{\alg}{\mathrm{alg}\,}
\newcommand{\eps}{\varepsilon}
\newcommand{\im}{\mathrm{Im}\,}
\newcommand{\Ker}{\mathrm{Ker}\,}
\title[Semi-Fredholm singular integral operators]{%
Semi-Fredholm singular integral operators
with piecewise continuous coefficients
on weighted variable Lebesgue spaces
are Fredholm}
\author{Alexei Yu. Karlovich}
\address{%
Departamento de Matem\'atica,
Instituto Superior T\'ecnico,
Av. Rovisco Pais 1,
1049-001, Lisbon, Portugal}
\email{akarlov@math.ist.utl.pt}
\thanks{The author is partially supported by F.C.T. (Portugal) grants
SFRH/BPD/11619/2002 and FCT/FEDER/POCTI/MAT/59972/2004}
\subjclass[2000]{Primary 47B35, 47A53; Secondary 45E05, 45F15, 46E30, 47L80}
\keywords{Semi-Fredholm operator, Carleson curve, Khvedelidze weight,
generalized Lebesgue space with variable exponent, singular integral operator}
\begin{document}
\begin{abstract}
Suppose $\Gamma$ is a Carleson Jordan curve with logarithmic whirl points,
$\varrho$ is a Khvedelidze weight, $p:\Gamma\to(1,\infty)$ is a continuous
function satisfying $|p(\tau)-p(t)|\le -\mathrm{const}/\log|\tau-t|$ for
$|\tau-t|\le 1/2$, and $L^{p(\cdot)}(\Gamma,\varrho)$ is a weighted generalized
Lebesgue space with variable exponent. We prove that all semi-Fredholm
operators in the algebra of singular integral operators with $N\times N$
matrix piecewise continuous coefficients are Fredholm on
$L_N^{p(\cdot)}(\Gamma,\varrho)$.
\end{abstract}
\maketitle
\section{Introduction}
Let $X$ be a Banach space and $\cB(X)$ be the Banach algebra of all bounded
linear operators on $X$. An operator $A\in\cB(X)$ is said to be $n$-normal
(resp. $d$-normal) if its image $\im A$ is closed in $X$ and the defect number
$n(A;X):=\dim\Ker A$ (resp. $d(A;X):=\dim\Ker A^*$) is finite. An operator $A$
is said to be semi-Fredholm on $X$ if it is $n$-normal or $d$-normal. Finally,
$A$ is said to be Fredholm if it is simultaneously $n$-normal and $d$-normal.
Let $N$ be a positive integer. We denote by $X_N$ the direct sum of $N$ copies of
$X$ with the norm
\[
\|f\|=\|(f_1,\dots,f_N)\|:=(\|f_1\|^2+\dots+\|f_N\|^2)^{1/2}.
\]

Let $\Gamma$ be a Jordan curve, that is, a curve that is homeomorphic to a circle.
We suppose that $\Gamma$ is rectifiable. We equip $\Gamma$ with Lebesgue length
measure $|d\tau|$ and the counter-clockwise orientation. The \textit{Cauchy
singular integral} of $f\in L^1(\Gamma)$ is defined by
\[
(Sf)(t):=\lim_{R\to 0}\frac{1}{\pi i}\int_{\Gamma\setminus\Gamma(t,R)}
\frac{f(\tau)}{\tau-t}d\tau
\quad (t\in\Gamma),
\]
where $\Gamma(t,R):=\{\tau\in\Gamma:|\tau-t|<R\}$ for $R>0$.
David \cite{David84} (see also \cite[Theorem~4.17]{BK97}) proved that the
Cauchy singular integral generates the bounded operator $S$ on the Lebesgue
space $L^p(\Gamma)$, $1<p<\infty$, if and only if $\Gamma$ is a Carleson
(Ahlfors-David regular) curve, that is,
\[
\sup_{t\in\Gamma}\sup_{R>0}\frac{|\Gamma(t,R)|}{R}<\infty,
\]
where $|\Omega|$ denotes the measure of a measurable set $\Omega\subset\Gamma$.
We can write $\tau-t=|\tau-t|e^{i\arg(\tau-t)}$
for $\tau\in\Gamma\setminus\{t\}$,
and the argument can be chosen so that it is continuous on $\Gamma\setminus\{t\}$.
It is known \cite[Theorem~1.10]{BK97} that for an arbitrary Carleson curve the
estimate
\[
\arg(\tau-t)=O(-\log|\tau-t|)\quad (\tau\to t)
\]
holds for every $t\in\Gamma$. One says
that a Carleson curve $\Gamma$ satisfies the \textit{logarithmic whirl condition}
at $t\in\Gamma$ if
\begin{equation}\label{eq:spiralic}
\arg(\tau-t)=-\delta(t)\log|\tau-t|+O(1)\quad (\tau\to t)
\end{equation}
with some $\delta(t)\in\R$. Notice that all piecewise smooth curves satisfy this
condition at each point and, moreover, $\delta(t)\equiv 0$. For more information
along these lines, see \cite{BK95}, \cite[Chap.~1]{BK97}, \cite{BK01}.

Let $t_1,\dots,t_m\in\Gamma$ be pairwise distinct points. Consider the
Khvedelidze weight
\[
\varrho(t):=\prod_{k=1}^m|t-t_k|^{\lambda_k}
\quad(\lambda_1,\dots,\lambda_m\in\R).
\]
Suppose $p:\Gamma\to(1,\infty)$ is a continuous function. Denote by
$L^{p(\cdot)}(\Gamma,\varrho)$ the set of all measurable complex-valued functions
$f$ on $\Gamma$ such that
\[
\int_\Gamma |f(\tau)\varrho(\tau)/\lambda|^{p(\tau)}|d\tau|<\infty
\]
for some $\lambda=\lambda(f)>0$. This set becomes a Banach space when equipped
with the Luxemburg-Nakano norm
\[
\|f\|_{p(\cdot),\varrho}:=\inf\left\{\lambda>0:
\int_\Gamma |f(\tau)\varrho(\tau)/\lambda|^{p(\tau)}|d\tau|\le 1\right\}.
\]
If $p$ is constant, then $L^{p(\cdot)}(\Gamma,\varrho)$ is nothing else than
the weighted Lebesgue space. Therefore, it is natural to refer to
$L^{p(\cdot)}(\Gamma,\varrho)$ as a \textit{weighted generalized Lebesgue space
with variable exponent} or simply as weighted variable Lebesgue spaces.
This is a special case of Musielak-Orlicz spaces \cite{Musielak83}.
Nakano \cite{Nakano50} considered these spaces (without weights) as
examples of so-called modular spaces, and sometimes the spaces
$L^{p(\cdot)}(\Gamma,\varrho)$ are referred to as weighted Nakano spaces.

If $S$ is bounded on $L^{p(\cdot)}(\Gamma,\varrho)$, then from
\cite[Theorem~6.1]{Karlovich03} it follows that $\Gamma$ is a Carleson curve.
The following result is announced in \cite[Theorem~7.1]{Kokilashvili05}
and in \cite[Theorem~D]{KS05}. Its full proof is published in \cite{KPS-Simonenko}.
\begin{theorem}\label{th:KPS}
Let $\Gamma$ be a Carleson Jordan curve and $p:\Gamma\to(1,\infty)$
be a continuous function satisfying
\begin{equation}\label{eq:Dini-Lipschitz}
|p(\tau)-p(t)|\le -A_\Gamma/\log|\tau-t|
\quad\mbox{whenever}\quad
|\tau-t|\le1/2,
\end{equation}
where $A_\Gamma$ is a positive constant depending only on $\Gamma$.
The Cauchy singular integral operator $S$ is bounded on $L^{p(\cdot)}(\Gamma,\varrho)$
if and only if
\begin{equation}\label{eq:Khvedelidze}
0<1/p(t_k)+\lambda_k<1
\quad\mbox{for all}\quad
k\in\{1,\dots,m\}.
\end{equation}
\end{theorem}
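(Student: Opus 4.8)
The plan is to treat boundedness of $S$ on $L^{p(\cdot)}(\Gamma,\varrho)$ as a weighted-norm problem on the space of homogeneous type $(\Gamma,|d\tau|)$ and to prove the two implications separately, in each direction reducing the variable-exponent statement near the nodes $t_1,\dots,t_m$ to a classical constant-exponent statement by exploiting \eqref{eq:Dini-Lipschitz}. Note first that, $\Gamma$ being compact and $p$ continuous, one has $1<\min_\Gamma p\le\max_\Gamma p<\infty$, so $p'$ is bounded away from $1$ and $\infty$ as well; moreover \eqref{eq:Dini-Lipschitz} forces $|p(\tau)-p(t_k)|=O(1/\log(1/|\tau-t_k|))$, so on the arcs $\Gamma(t_k,R)$ with $R\to0$ the exponent $p$ is, for all relevant estimates, indistinguishable from the constant $p(t_k)$.

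\emph{Necessity.} That $\Gamma$ must be a Carleson curve when $S$ is bounded is the cited Theorem~6.1 of \cite{Karlovich03}, so assume this from now on. Since $S$ is skew-adjoint with respect to the natural duality, its boundedness on $L^{p(\cdot)}(\Gamma,\varrho)$ is inherited by the associate space $L^{p'(\cdot)}(\Gamma,\varrho^{-1})$; combining the two bounds by the standard test-function arguments (the constant-exponent analogue being the $A_p$-necessity for $S$ on Carleson curves, \cite{BK97}) yields a Muckenhoupt-type condition for the pair $(p(\cdot),\varrho)$ over the balls $\Gamma(t,R)$. Localizing this condition at a node $t_k$, where $\varrho(\tau)\asymp|\tau-t_k|^{\lambda_k}$ and $|\Gamma(t_k,R)|\asymp R$ by the Carleson property, and invoking the local-constancy reduction, one arrives at the requirement $|\tau-t_k|^{\lambda_k p(t_k)}\in A_{p(t_k)}(\Gamma)$; for a power weight on a Carleson curve this holds exactly when $-1<\lambda_k p(t_k)<p(t_k)-1$, which is \eqref{eq:Khvedelidze}. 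Alternatively one may test $S$ directly on $f_\beta(\tau)=|\tau-t_k|^{-\beta}\chi_{\Gamma(t_k,R_0)}(\tau)$: then $\varrho f_\beta\in L^{p(\cdot)}(\Gamma)$ iff $(\beta-\lambda_k)p(t_k)<1$, while $Sf_\beta$ retains a singularity of order $|t-t_k|^{-\beta}$ near $t_k$ for $0<\beta<1$, and demanding $\varrho Sf_\beta\in L^{p(\cdot)}(\Gamma)$ throughout the admissible range of $\beta$ again pins down \eqref{eq:Khvedelidze}.

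\emph{Sufficiency.} Three ingredients combine. First, the constant-exponent weighted theory: David's theorem gives $S\in\cB(L^2(\Gamma))$, and since $1/(\tau-t)$ is a standard Calder\'on--Zygmund kernel on $(\Gamma,|d\tau|)$ with respect to the Euclidean metric --- the size and smoothness bounds following from rectifiability together with the Carleson condition --- Calder\'on--Zygmund theory on spaces of homogeneous type yields $S\in\cB(L^{p_0}(\Gamma,w))$ for every constant $p_0\in(1,\infty)$ and every $w$ with $w^{p_0}\in A_{p_0}(\Gamma)$. Second, the maximal-operator input: \eqref{eq:Dini-Lipschitz} is the Carleson-curve form of log-H\"older continuity, and under it the Hardy--Littlewood maximal operator over the balls $\Gamma(t,R)$ is bounded both on $L^{p(\cdot)}(\Gamma)$ and on its associate space $L^{p'(\cdot)}(\Gamma)$. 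Third, Rubio de Francia extrapolation in its variable-exponent weighted form: the first two ingredients upgrade the weighted $L^{p_0}$ bounds for $S$ to boundedness on $L^{p(\cdot)}(\Gamma,\varrho)$, once $\varrho$ is checked to be admissible --- and by the localization above admissibility of $\varrho$ amounts to $|\tau-t_k|^{\lambda_k p(t_k)}\in A_{p(t_k)}(\Gamma)$ for each $k$, i.e.\ to \eqref{eq:Khvedelidze}.

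\emph{Main obstacle.} The delicate point, present in both directions, is the passage between the finitely many pointwise conditions \eqref{eq:Khvedelidze} at the nodes and a genuine membership of $\varrho$ in the relevant \emph{variable} Muckenhoupt-type class on all of $\Gamma$: one must verify that log-H\"older continuity really lets $p(t_k)$ stand in for $p$ on shrinking arcs \emph{uniformly enough} to feed the extrapolation, that away from the nodes $\varrho$ is bounded above and below and hence harmless, and that the possibly wildly oscillating Carleson geometry of $\Gamma$ spoils neither the Calder\'on--Zygmund estimates nor the characterization of power weights in $A_r(\Gamma)$. I expect the bookkeeping for this localization --- controlling the $O(1/\log)$ errors from \eqref{eq:Dini-Lipschitz} as they are summed against the Luxemburg--Nakano modular --- to be the main technical burden; the remaining steps assemble established machinery.
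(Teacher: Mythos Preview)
The paper does not prove Theorem~\ref{th:KPS}; it is quoted as a known result, with the remark that it ``is announced in \cite[Theorem~7.1]{Kokilashvili05} and in \cite[Theorem~D]{KS05}'' and that ``its full proof is published in \cite{KPS-Simonenko}.'' There is therefore no proof in the paper to compare your proposal against.

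That said, your outline is broadly consonant with the approach in the cited literature: sufficiency in \cite{KPS-Simonenko} does proceed via boundedness of the maximal operator on $L^{p(\cdot)}(\Gamma)$ under the log-H\"older condition \eqref{eq:Dini-Lipschitz} (this is the step where the Carleson property of $\Gamma$ and the Dini--Lipschitz regularity of $p$ are used together), and then passes to $S$ by a Calder\'on--Zygmund/extrapolation mechanism; necessity is handled by localizing to the nodes and reducing to the classical power-weight characterization. Your proposal correctly identifies the localization bookkeeping as the technical core, but be aware that what you have written is a plan rather than a proof: the ``variable Muckenhoupt class'' you invoke, the precise form of the extrapolation theorem on Carleson curves with variable exponent, and the uniform control of the $O(1/\log)$ errors against the modular all require explicit statements and arguments that you have not supplied. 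If you intend to actually carry this out, you should consult \cite{KPS-Simonenko} directly, since the present paper treats Theorem~\ref{th:KPS} as input, not as something to be reproved.
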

We define by $PC(\Gamma)$ as the set of all $a\in L^\infty(\Gamma)$ for which
the one-sided limits
\[
a(t\pm 0):=\lim_{\tau\to t\pm 0}a(\tau)
\]
exist at each point $t\in\Gamma$; here $\tau\to t-0$ means that $\tau$
approaches $t$ following the orientation of $\Gamma$, while $\tau\to t+0$
means that $\tau$ goes to $t$ in the opposite direction. Functions in $PC(\Gamma)$
are called piecewise continuous functions.

The operator $S$ is defined on $L_N^{p(\cdot)}(\Gamma,\varrho)$ elementwise.
We let stand $PC_{N\times N}(\Gamma)$ for the algebra of all $N\times N$
matrix functions with entries in $PC(\Gamma)$. Writing the elements of
$L_N^{p(\cdot)}(\Gamma,\varrho)$ as columns, we can define the multiplication
operator $aI$ for $a\in PC_{N\times N}(\Gamma)$ as multiplication by the matrix
function $a$. Let $\alg(S,PC;L_N^{p(\cdot)}(\Gamma,\varrho))$ denote the smallest
closed subalgebra of $\cB(L_N^{p(\cdot)}(\Gamma,\varrho))$ containing the
operator $S$ and the set $\{aI:a\in PC_{N\times N}(\Gamma)\}$.

For the case of piecewise Lyapunov curves $\Gamma$ and constant exponent $p$,
a Fredholm criterion for an arbitrary operator $A\in \alg(S,PC;L_N^p(\Gamma,\varrho))$
was obtained by Gohberg and Krupnik \cite{GK71} (see also \cite{GK92} and
\cite{Krupnik87}). Spitkovsky \cite{Spitkovsky92} established a Fredholm criterion
for the operator $aP+Q$, where $a\in PC_{N\times N}(\Gamma)$ and
\[
P:=(I+S)/2,
\quad
Q:=(I-S)/2,
\]
on the space $L_N^p(\Gamma,w)$, where $\Gamma$ is a smooth curve and $w$ is
an arbitrary Muckenhoupt weight. He also proved that if $aP+Q$ is semi-Fredholm
on $L_N^p(\Gamma,w)$, then it is automatically Fredholm on $L_N^p(\Gamma,w)$.
These results were extended to the case of an arbitrary operator
$A\in \alg(S,PC;L_N^p(\Gamma,w))$ in \cite{GKS93}. The Fredholm theory for singular
integral operators with piecewise continuous coefficients on Lebesgue spaces
with arbitrary Muckenhoupt weights on arbitrary Carleson curves curves was
accomplished in a series of papers by B\"ottcher and Yu.~Karlovich. It is
presented in their monograph \cite{BK97} (see also the nice survey \cite{BK01}).

The study of singular integral operators with discontinuous coefficients
on generalized Lebesgue spaces with variable exponent was started in \cite{KS03,KPS05}.
The results of \cite{BK97} are partially extended to the case of weighted
generalized Lebesgue spaces with variable exponent in
\cite{Karlovich03,Karlovich05,Karlovich06}. Suppose $\Gamma$ is a Carleson curve
satisfying the logarithmic whirl condition (\ref{eq:spiralic}) at each point
$t\in\Gamma$, $\varrho$ is a Khvedelidze weight, and $p$ is a variable exponent
as in Theorem~\ref{th:KPS}. Under these assumptions, a Fredholm criterion for an
arbitrary operator $A$ in the algebra $\alg(S,PC;L_N^{p(\cdot)}(\Gamma,\varrho))$
is obtained in \cite[Theorem~5.1]{Karlovich05} by using the Allan-Douglas
local principle \cite[Section~1.35]{BS06} and the two projections theorem
\cite{FRS93}. However, this approach does not allow us to get additional
information about semi-Fredholm and Fredholm operators in this algebra. For
instance, to obtain an index formula for Fredholm operators in this algebra,
we need other means (see, e.g., \cite[Section~6]{Karlovich06}).
Following the ideas of \cite{GK71,Spitkovsky92,GKS93}, in this paper
we present a self-contained proof of the following result.
\begin{theorem}\label{th:main}
Let $\Gamma$ be a Carleson Jordan curve satisfying the logarithmic whirl
condition {\rm(\ref{eq:spiralic})} at each point $t\in\Gamma$, let
$p:\Gamma\to(1,\infty)$ be a continuous function satisfying
{\rm(\ref{eq:Dini-Lipschitz})}, and let $\varrho$ be a Khvedelidze weight
satisfying {\rm(\ref{eq:Khvedelidze})}. If an operator in the
algebra $\alg(S,PC;L_N^{p(\cdot)}(\Gamma,\varrho))$ is semi-Fredholm, then
it is Fredholm.
\end{theorem}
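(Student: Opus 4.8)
The plan is to reduce the statement to a local, pointwise model problem via the Allan--Douglas local principle and then to exploit the explicit structure of the local algebras, which are generated by two idempotents, to verify that one-sided invertibility of a local representative forces two-sided invertibility. First I would recall from \cite[Theorem~5.1]{Karlovich05} (or reconstruct the relevant part of its proof) the local spectra attached to each point $t\in\Gamma$: after localization at $t$, the coset of $A$ in the quotient algebra by the compact operators maps to an operator in the $C^*$-like algebra generated by two projections, and its invertibility is governed by the invertibility of a family of $2N\times 2N$ matrix functions
\[
A_t(x):=a(t-0)\,p_t(x)+a(t+0)\,(I-p_t(x)),
\]
where $p_t(x)$ ranges over a one-parameter family of idempotents indexed by a parameter $x$ living on a circular arc (or the whole circle) determined by $\delta(t)$, $p(t)$, and the weight exponent $\lambda_k$ at $t=t_k$. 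The key analytic input, already available from the cited results, is that the algebra $\alg(S,PC;L_N^{p(\cdot)}(\Gamma,\varrho))$ is a so-called standard algebra: the union of the local spectra is exactly the set where these leaf matrices degenerate, and Fredholmness of $A$ is equivalent to $\det A_t(x)\neq 0$ for all $t$ and all $x$ on the corresponding leaf. The same symbol calculus is, by the general theory of \cite{GKS93}, \emph{sufficient} for semi-Fredholmness as well, with the one-sided defect numbers controlled by the winding/linking of the leaves.

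Next I would show the converse direction, which is the substance of the theorem: if $A$ is semi-Fredholm but \emph{not} Fredholm, then some leaf matrix $A_t(x_0)$ is singular at an interior point $x_0$ of the leaf. The crucial geometric observation is that the parameter set for $x$ is always a full circle or a closed arc \emph{with both endpoints corresponding to genuine limit operators}, so a leaf that touches the non-invertible locus at one point, but is invertible at all other points of the circle, would force the determinant function $x\mapsto\det A_t(x)$ to have a zero of even order there (because the circle bounds and the determinant is the restriction of an analytic function in the disk, by the Poincaré--Bohl / one-sided-invertibility argument of Spitkovsky). I would then argue, exactly as in \cite[\S\,3--4]{Spitkovsky92} and \cite{GKS93}, that a zero of the leaf determinant on the circle is incompatible with $A$ being $n$-normal \emph{or} $d$-normal: the model operator on the arc, $a(t-0)P_t+a(t+0)Q_t$ with $P_t$ the Cauchy projection twisted by the spiral/weight data, has either infinite-dimensional kernel or infinite-dimensional cokernel as soon as its symbol determinant vanishes somewhere on the defining curve, because the curve is \emph{closed} and one cannot have a finite partial index without a nonzero determinant. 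Lifting this back through the local principle (the local representative inherits the non-semi-Fredholmness), I would conclude that $A$ itself is neither $n$-normal nor $d$-normal, a contradiction; hence every semi-Fredholm $A$ in the algebra must already be Fredholm.

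Concretely, the steps in order are: (i) set up the Allan--Douglas localization over the points $t_1,\dots,t_m$ (where $p$ or $\varrho$ may be bad, or $\Gamma$ may whirl) and over the regular points of $\Gamma$, citing the boundedness of $S$ from Theorem~\ref{th:KPS} and the known local algebra isomorphisms from \cite{Karlovich03,Karlovich05}; (ii) identify each local algebra with an algebra generated by two idempotents $p_t$ and $I-p_t$ together with the scalar field, and write down the associated leaf of $2N\times 2N$ matrices; (iii) record that Fredholmness of $A$ is equivalent to invertibility of every leaf matrix, and that semi-Fredholmness of $A$ implies at least one-sided invertibility of every leaf matrix; (iv) prove the \emph{local dichotomy}: for the two-idempotent model operator attached to a closed parameter curve, one-sided invertibility implies two-sided invertibility, because a singular symbol on a closed curve produces an infinite-dimensional defect of \emph{both} kinds; (v) assemble (iii) and (iv): if $A$ is semi-Fredholm, every leaf matrix is one-sided invertible, hence by (iv) invertible, hence by (iii) $A$ is Fredholm. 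I expect the main obstacle to be step (iv): in the weighted variable-exponent setting the "twisted" Cauchy projection $P_t$ is not self-adjoint and the space is not a Hilbert space, so the clean Hardy-space duality arguments of \cite{Spitkovsky92} are unavailable; the remedy is to work instead with the abstract two-projections theorem of \cite{FRS93}, which already encodes the relevant spectral structure, and to combine it with the observation from \cite[Theorem~1.10]{BK97} that the spiralic parameter $\delta(t)$ always yields a \emph{full} (unbounded) family of idempotents, so the associated leaf is closed and the even-order-zero obstruction applies without any Hilbert-space input.
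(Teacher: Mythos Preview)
Your plan differs substantially from the paper's proof, which avoids localization entirely. The paper first establishes a scalar closed-range criterion (Theorem~\ref{th:criterion-closedness}) by embedding $L^{p(\cdot)}(\Gamma,\varrho)$ into an ordinary $L^{1+\eps}(\Gamma)$ and bounding both defect numbers there; it then reduces the matrix operator $aP+bQ$ to the scalar case via a factorization $b^{-1}a=c_1gc_2$ with $g$ upper-triangular together with Spitkovsky's block-operator theorem (Theorem~\ref{th:Spitkovsky}), lifts to finite sums of products by linear dilation (Lemma~\ref{le:dilation}), and passes to the closure by stability of semi-Fredholmness (Lemma~\ref{le:stability}). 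The introduction in fact remarks explicitly that the Allan--Douglas route used in \cite{Karlovich05} ``does not allow us to get additional information about semi-Fredholm and Fredholm operators in this algebra.''

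The substantive gap in your plan is in steps (iii)--(iv). In (iii) you simply ``record'' that semi-Fredholmness of $A$ forces one-sided invertibility of every leaf matrix, citing \cite{GKS93}; but that paper treats constant-exponent $L^p$ with Muckenhoupt weights, and the Allan--Douglas principle itself characterises only two-sided invertibility in the Calkin algebra. One-sided invertibility does survive passage to a quotient, but that yields one-sided invertibility of the local \emph{coset} in an abstract Banach quotient algebra, not a statement about the matrices $A_t(x)$; to go further you would need the two-projections symbol map in the Banach (non-$C^*$) setting to be injective on the local quotient and its range to be inverse-closed, and neither fact is supplied. Your step (iv) then argues via ``zeros of even order'' on a ``closed parameter curve'' and infinite-dimensional defects of a ``model operator $a(t-0)P_t+a(t+0)Q_t$'', but after localization there is no concrete operator with a kernel or cokernel---only a coset---so those defect statements are not meaningful, and in any case the leaves here are \emph{arcs} (parametrised by $\mu\in[0,1]$, endpoints corresponding to the one-sided limits of the coefficient), not closed curves, so the winding/even-order obstruction you invoke does not apply. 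The paper's argument sidesteps all of this by never leaving the concrete operator level: the scalar step works because on the auxiliary space $L^{1+\eps}(\Gamma)$ the operator $aP+Q$ is genuinely Fredholm for small $\eps$, which directly bounds both defect numbers on $L^{p(\cdot)}(\Gamma,\varrho)$.
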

The paper is organized as follows. Section~\ref{sec:semi-Fredholm}
contains general results on semi-Fredholm operators. Some auxiliary
results on singular integral operators acting on $L^{p(\cdot)}(\Gamma,\varrho)$
are collected in Section~\ref{sec:SIO}. In Section~\ref{sec:closed-range},
we prove a criterion guaranteeing that $aP+Q$, where $a\in PC(\Gamma)$,
has closed image in $L^{p(\cdot)}(\Gamma,\varrho)$. This criterion is
intimately related with a Fredholm criterion for $aP+Q$ proved in
\cite{Karlovich05}. Notice that we are able to prove both results for
Carleson Jordan curves which satisfy the additional condition (\ref{eq:spiralic}).
Section~\ref{sec:necessity} contains the proof of the fact that if
the operator $aP+bQ$
is semi-Fredholm on $L_N^{p(\cdot)}(\Gamma,\varrho)$, then the coefficients
$a$ and $b$ are invertible in the algebra $L_{N\times N}^\infty(\Gamma)$.
In Section~\ref{sec:SIO-matrix}, we prove that the semi-Fredholmness and
Fredholmness of $aP+bQ$ on $L_N^{p(\cdot)}(\Gamma,\varrho)$, where $a$ and
$b$ are piecewise continuous matrix functions, are equivalent. In
Section~\ref{sec:algebra}, we extend this result to the sums of products
of operators of the form $aP+bQ$ by using the procedure of linear dilation.
Since these sums are dense in $\alg(S,PC;L_N^{p(\cdot)}(\Gamma,\varrho))$,
Theorem~\ref{th:main} follows from stability properties of semi-Fredholm
operators.
\section{General results on semi-Fredholm and Fredholm operators}
\label{sec:semi-Fredholm}
\subsection{The Atkinson and Yood theorems}
For a Banach space $X$, let $\Phi(X)$ be the set of all Fredholm operators
on $X$ and let $\Phi_+(X)$ (resp. $\Phi_-(X)$) denote the set of all
$n$-normal (resp. $d$-normal) operators $A\in\cB(X)$ such that $d(A;X)=+\infty$
(resp. $n(A;X)=+\infty$).
\begin{theorem}\label{th:Atkinson-Yood}
Let $X$ be a Banach space and $K$ be a compact operator on $X$.
\begin{enumerate}
\item[{\rm(a)}] If $A,B\in\Phi(X)$, then $AB\in\Phi(X)$ and $A+K\in\Phi(X)$.
\item[{\rm(b)}] If $A,B\in\Phi_\pm(X)$, then $AB\in\Phi_\pm(X)$ and $A+K\in\Phi_\pm(X)$.
\item[{\rm(c)}] If $A\in\Phi(X)$ and $B\in\Phi_\pm(X)$, then $AB\in\Phi_\pm(X)$
and $BA\in\Phi_\pm(X)$.
\end{enumerate}
\end{theorem}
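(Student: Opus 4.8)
The plan is to reduce everything to one characterization of $n$-normality and to handle $d$-normality by duality. The key lemma I would establish first is that $A\in\cB(X)$ is $n$-normal if and only if it is bounded below modulo a compact operator, i.e. there are a Banach space $Y$, a compact operator $K\in\cB(X,Y)$, and a constant $c>0$ with $\|Ax\|+\|Kx\|\ge c\|x\|$ for all $x\in X$. For the forward direction, $\Ker A$ is finite-dimensional, hence complemented, and $A$ is bounded below on a complement because $\im A$ is closed; one then takes $K$ to be a scalar multiple of the finite-rank projection onto $\Ker A$. For the backward direction, the inequality forces $K$ to be bounded below on the closed subspace $\Ker A$, so $\dim\Ker A<\infty$ since a compact operator cannot be bounded below on an infinite-dimensional subspace, and closedness of $\im A$ follows from a routine Cauchy-sequence argument using compactness of $K$. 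Dually, $A$ is $d$-normal if and only if $A^*$ is $n$-normal: this follows from the closed range theorem together with $d(A;X)=\dim\Ker A^*$, and it lets me transfer every assertion about $n$-normal operators to $d$-normal operators via $(AB)^*=B^*A^*$ and Schauder's theorem.

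From this characterization the stability of $n$-normality under composition and compact perturbation is immediate. If $A,B$ are $n$-normal with $\|Ax\|+\|K_Ax\|\ge c_A\|x\|$ and $\|Bx\|+\|K_Bx\|\ge c_B\|x\|$, then applying the first estimate to $Bx$ and inserting the second yields $\|ABx\|+\|K_ABx\|+c_A\|K_Bx\|\ge c_Ac_B\|x\|$; packaging $x\mapsto(K_ABx,c_AK_Bx)$ as a single compact operator into $X\oplus X$ shows that $AB$ is $n$-normal. For a compact $K_0$, the estimate $\|(A+K_0)x\|+\|K_0x\|+\|K_Ax\|\ge\|Ax\|+\|K_Ax\|\ge c_A\|x\|$ shows that $A+K_0$ is $n$-normal. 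By duality the same holds for $d$-normality. Since a Fredholm operator is exactly one that is simultaneously $n$-normal and $d$-normal, part (a) follows at once.

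For parts (b) and (c) I would treat the $\Phi_+$ and $\Phi_-$ cases separately, the sign being fixed throughout each statement. In part (b) with $A,B\in\Phi_+$, the products and perturbations are already $n$-normal by the above, so only $d=+\infty$ remains. If $AB$ were Fredholm, then $\im(AB)\subseteq\im A$ would force $\im A$ to have finite codimension, making the $n$-normal operator $A$ Fredholm and contradicting $d(A;X)=+\infty$; and if $A+K$ were Fredholm, then $A=(A+K)+(-K)$ would be Fredholm by part (a), again a contradiction. The $\Phi_-$ case is dual, using $\Ker B\subseteq\Ker(AB)$ to control $n(B;X)$. The cleanest tool for part (c) is that a Fredholm $A$ has a finite-rank regularizer: since $\Ker A$ is finite-dimensional and $\im A$ has finite codimension, both are complemented, and inverting $A$ between the complements produces $B_A\in\cB(X)$ and finite-rank $F_1,F_2$ with $B_AA=I-F_1$ and $AB_A=I-F_2$. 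Then from $B=B_A(AB)+F_1B=(BA)B_A+BF_2$ one sees that if either $AB$ or $BA$ were Fredholm, then $B$ would be Fredholm by part (a), contradicting $B\in\Phi_\pm$; since a Fredholm operator is both $n$- and $d$-normal, the products $AB$ and $BA$ inherit the one-sided normality of $B$, and hence lie in $\Phi_\pm$.

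I expect the main obstacle to be the proof of the characterization lemma in a general Banach space, specifically the asymmetry between kernels and ranges: finite-dimensional kernels are always complemented, but the closed range of a merely $n$-normal operator need not be, so a one-sided regularizer need not exist. I avoid this by phrasing the characterization as a bounded-below-modulo-compact inequality rather than as one-sided invertibility, and by invoking genuine regularizers only for Fredholm operators, where both kernel and cokernel are finite-dimensional and hence complemented — which is exactly the situation exploited in part (c).
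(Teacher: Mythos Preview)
The paper does not give its own proof of this theorem: it simply attributes part (a) to Atkinson and parts (b) and (c) to Yood, and refers the reader to Gohberg and Krupnik \cite[Chap.~4, Sections 6 and 15]{GK92}. Your proposal therefore goes well beyond what the paper provides, and it is correct. The route you take --- characterizing $n$-normality as ``bounded below modulo a compact operator'', transferring to $d$-normality via the closed range theorem and $A\mapsto A^*$, and handling the infinite defect in (b) and (c) by contradiction through the inclusions $\im(AB)\subset\im A$, $\Ker B\subset\Ker(AB)$, and the regularizer identities $B=B_A(AB)+F_1B=(BA)B_A+BF_2$ --- is in fact essentially the argument one finds in the cited reference, so there is no meaningful methodological divergence to report. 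The only spot that deserves a word of care is the closed-range half of your key lemma: before running the Cauchy-sequence argument on a sequence with $Ax_n$ convergent, one should first pass to a closed complement of the finite-dimensional $\Ker A$ so that unboundedness of $(x_n)$ can be ruled out; you allude to this but it is worth making explicit.
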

Part (a) is due to Atkinson, parts (b) and (c) were obtained by Yood.
For a proof, see e.g. \cite[Chap.~4, Sections 6 and 15]{GK92}.
\begin{theorem}[see e.g. \cite{GK92}, Chap.~4, Theorem~7.1]
\label{th:regularization}
Let $X$ be a Banach space. An operator $A\in\cB(X)$ is Fredholm if and only
if there exists an operator $R\in\cB(X)$ such that $AR-I$ and $RA-I$ are
compact.
\end{theorem}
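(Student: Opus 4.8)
The plan is to establish the two implications separately, using only Riesz--Schauder theory for compact operators, the open mapping theorem, and the complementability of finite-dimensional and closed finite-codimensional subspaces.

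For sufficiency, I would assume $R\in\cB(X)$ is given with $K_1:=AR-I$ and $K_2:=RA-I$ compact, and first pin down the two defect numbers. Since $\Ker A\subseteq\Ker(RA)=\Ker(I+K_2)$ and a compact perturbation of the identity has finite-dimensional kernel, $n(A;X)<\infty$. Passing to adjoints in $AR=I+K_1$ gives $R^*A^*=I+K_1^*$ with $K_1^*$ compact, so $\Ker A^*\subseteq\Ker(I+K_1^*)$ is finite-dimensional and $d(A;X)<\infty$. It remains to prove that $\im A$ is closed. Using $\dim\Ker A<\infty$ I would write $X=\Ker A\oplus X_0$ with $X_0$ closed and put $A_0:=A|_{X_0}$, so that $A_0$ is injective with $\im A_0=\im A$. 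I would then show $A_0$ is bounded below: if $x_n\in X_0$, $\|x_n\|=1$, and $Ax_n\to 0$, then $RAx_n\to 0$, whence $x_n=-K_2x_n+o(1)$; compactness of $K_2$ extracts a subsequence $x_{n_k}\to x_0\in X_0$ with $\|x_0\|=1$ and $Ax_0=0$, contradicting injectivity of $A_0$. Thus $\im A=\im A_0$ is closed, and $A$ is both $n$-normal and $d$-normal, that is, Fredholm.

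For necessity, I would assume $A$ is Fredholm and construct a regularizer explicitly. Complementing the finite-dimensional kernel gives $X=\Ker A\oplus X_0$ with $X_0$ closed; complementing the closed image gives $X=\im A\oplus Y_0$ with $\dim Y_0=\dim(\im A)^\perp=\dim\Ker A^*<\infty$, using that $(\im A)^\perp=\Ker A^*$. The restriction $A_0\colon X_0\to\im A$ is a continuous bijection between Banach spaces, so $A_0^{-1}$ is bounded by the open mapping theorem. Letting $P$ denote the bounded projection of $X$ onto $\im A$ along $Y_0$, I would define $R:=A_0^{-1}P$. A direct computation gives $RA=I-P_1$ and $AR=I-P_2$, where $P_1$ and $P_2$ are the finite-rank projections of $X$ onto $\Ker A$ (along $X_0$) and onto $Y_0$ (along $\im A$); hence $RA-I$ and $AR-I$ are of finite rank, in particular compact.

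The hard part is the closedness of $\im A$ in the sufficiency direction, which is exactly where the compactness of $K_2$ enters in an essential, non-algebraic way through the bounded-below argument; the finiteness of the defect numbers and the entire necessity direction are then bookkeeping built on the standard facts that finite-dimensional and closed finite-codimensional subspaces are complemented and that the relevant restrictions are boundedly invertible.
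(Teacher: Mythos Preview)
Your argument is correct and is the standard Atkinson proof. Note, however, that the paper does not actually supply a proof of this theorem: it merely records the statement with the citation ``see e.g.\ \cite{GK92}, Chap.~4, Theorem~7.1'' and uses the result later as a black box. So there is no proof in the paper to compare against; what you have written is exactly the kind of argument one finds in \cite{GK92} or any textbook treatment of Fredholm theory.

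One minor remark on the necessity direction: when you invoke $\dim Y_0=\dim\Ker A^*$, you are implicitly using that for a closed subspace $M\subset X$ one has $\dim(X/M)=\dim M^\perp$, together with $(\im A)^\perp=\Ker A^*$ because $\im A$ is closed. This is standard, but since the paper's definition of $d$-normal is phrased in terms of $\dim\Ker A^*$ rather than $\operatorname{codim}\im A$, it is worth making the identification explicit.
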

\subsection{Stability of semi-Fredholm operators}
\begin{theorem}[see e.g. \cite{GK92}, Chap. 4, Theorems~6.4, 15.4]
\label{th:stability}
Let $X$ be a Banach space.
\begin{enumerate}
\item[(a)]
If $A\in\Phi(X)$, then there exists an $\eps=\eps(A)>0$ such that
$A+D\in\Phi(X)$ whenever $\|D\|_{\cB(X)}<\eps$.

\item[(b)]
If $A\in\Phi_\pm(X)$, then there exists an $\eps=\eps(A)>0$ such that
$A+D\in\Phi_\pm(X)$ whenever $\|D\|_{\cB(X)}<\eps$.
\end{enumerate}
\end{theorem}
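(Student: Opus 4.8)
The plan is to treat the two parts separately: part (a) reduces directly to the regularizer characterization of Theorem~\ref{th:regularization}, while part (b) splits into preserving semi-normality (easy) and preserving the infinite defect (the crux). For the $\Phi_-$ case I would reduce to the $\Phi_+$ case by passing to adjoints. Throughout, the admissible size of $D$ is governed by quantities attached to $A$ (the norm of a regularizer, a lower bound for $A$ modulo its kernel), so the resulting $\eps$ depends only on $A$, as required.

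For part (a), let $A\in\Phi(X)$ and choose, by Theorem~\ref{th:regularization}, an operator $R\in\cB(X)$ and compact operators $K_1,K_2$ with $AR=I+K_1$ and $RA=I+K_2$; if $R=0$ then $I=-K_1$ is compact, $X$ is finite dimensional, and the claim is trivial, so assume $\|R\|_{\cB(X)}>0$ and set $\eps:=1/\|R\|_{\cB(X)}$. For $\|D\|_{\cB(X)}<\eps$ both $\|DR\|<1$ and $\|RD\|<1$, so $I+DR$ and $I+RD$ are invertible by the Neumann series. Then $(A+D)R=(I+DR)+K_1$ and $R(A+D)=(I+RD)+K_2$, and multiplying by the respective inverses yields $R_r:=R(I+DR)^{-1}$ and $R_\ell:=(I+RD)^{-1}R$ with $(A+D)R_r-I$ and $R_\ell(A+D)-I$ compact. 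A routine computation with $R_\ell(A+D)R_r$ shows $R_\ell-R_r$ is compact, so $R_\ell$ is in fact a two-sided regularizer of $A+D$; by Theorem~\ref{th:regularization}, $A+D\in\Phi(X)$.

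For part (b) it suffices, by passing to adjoints and using the closed range theorem (which relates $\Phi_-(X)$ to the operators whose adjoints lie in $\Phi_+(X^*)$ and preserves the perturbation norm, $\|D^*\|=\|D\|$), to treat the case $A\in\Phi_+(X)$, that is, $A$ is $n$-normal with $d(A;X)=\infty$. Since $\Ker A$ is finite dimensional it is complemented, say $X=\Ker A\oplus M$ with $M$ closed, and $A|_M\colon M\to\im A$ is an isomorphism onto the closed subspace $\im A$; hence $\|Ax\|\ge c\|x\|$ for all $x\in M$ and some $c>0$. For $\|D\|<c$ the bound $\|(A+D)x\|\ge(c-\|D\|)\|x\|$ on $M$ shows $A+D$ is bounded below on $M$, so $(A+D)(M)$ is closed and $\Ker(A+D)\cap M=\{0\}$; projecting along $M$ gives $n(A+D;X)\le\dim\Ker A<\infty$. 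As $\im(A+D)=(A+D)(M)+(A+D)(\Ker A)$ is the sum of a closed subspace and a finite dimensional one, it is closed, so $A+D$ is $n$-normal.

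It remains to preserve the infinite defect, which I expect to be the main obstacle, since it is exactly here that the references invoke index-constancy arguments. The range $\im A=A(M)$ has infinite codimension because $d(A;X)=\infty$, and I would transfer this to $(A+D)(M)$ by a gap argument. Writing $\delta(U,V):=\sup_{u\in U,\,\|u\|=1}\mathrm{dist}(u,V)$ for closed subspaces and using that $A|_M$ and $(A+D)|_M$ are bounded below, one gets $\mathrm{dist}(Ax,(A+D)(M))\le\|Dx\|\le(\|D\|/c)\|Ax\|$ and, symmetrically, a matching bound in the other direction, so the gap $\max\{\delta(A(M),(A+D)(M)),\delta((A+D)(M),A(M))\}$ tends to $0$ as $\|D\|\to0$. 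By Kato's theorem two closed subspaces of gap $<1$ share the same codimension, so $(A+D)(M)$ has infinite codimension once $\|D\|$ is small. Adjoining the finite dimensional space $(A+D)(\Ker A)$ lowers the codimension by at most $\dim\Ker A<\infty$, whence $\im(A+D)$ still has infinite codimension, i.e. $d(A+D;X)=\infty$. Thus $A+D\in\Phi_+(X)$, completing the plan; the only nontrivial external input is the gap-perturbation fact for closed subspaces, which here replaces the local constancy of the index used in the cited proofs.
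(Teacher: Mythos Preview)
The paper does not give its own proof of this theorem: it is quoted as a standard result with the reference to \cite{GK92}, Chap.~4, Theorems~6.4 and 15.4, and is used as a black box throughout. So there is no ``paper's proof'' to compare against; I can only assess your argument on its own merits.

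Your proof of part~(a) via a Neumann-series perturbation of a two-sided regularizer is the classical argument and is correct. In part~(b), the preservation of $n$-normality by the lower-bound estimate on a closed complement $M$ of $\Ker A$ is standard and correct, and the duality reduction from $\Phi_-(X)$ to $\Phi_+(X^*)$ is fine (closed range passes to the adjoint, and $n(A^*)=d(A)$, $d(A^*)=n(A)$ for closed-range operators). The one place where you invoke a nontrivial external fact is the preservation of $d(A+D;X)=\infty$: you control the gap between $A(M)$ and $(A+D)(M)$ and appeal to Kato's theorem that closed subspaces at gap $<1$ have equal dimension and codimension. Your gap bounds $\delta(A(M),(A+D)(M))\le\|D\|/c$ and $\delta((A+D)(M),A(M))\le\|D\|/(c-\|D\|)$ are correct, and since $A(M)=\im A$ has infinite codimension the conclusion follows. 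This is a clean substitute for the local constancy of the index argument used in \cite{GK92}; either route works, and yours has the minor advantage of not requiring the index at all.
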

\begin{lemma}\label{le:stability}
Let $X$ be a Banach space. Suppose $A$ is a semi-Fredholm operator on $X$
and $\|A_n-A\|_{\cB(X)}\to 0$ as $n\to\infty$. If the operators $A_n$ are
Fredholm on $X$ for all sufficiently large $n$, then $A$ is Fredholm, too.
\end{lemma}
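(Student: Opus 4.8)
The plan is to deduce the stability of semi-Fredholmness into Fredholmness from the Atkinson–Yood theory recalled as Theorem~\ref{th:Atkinson-Yood}, using the stability result Theorem~\ref{th:stability}(b). The key observation is a dichotomy: a semi-Fredholm operator either is Fredholm or belongs to $\Phi_+(X)\cup\Phi_-(X)$, and the latter two classes are \emph{open} and \emph{disjoint from} $\Phi(X)$. Since $A_n\to A$ in the operator norm and each $A_n$ (for large $n$) lies in $\Phi(X)$, if $A$ were in $\Phi_+(X)$ or $\Phi_-(X)$, then by Theorem~\ref{th:stability}(b) all sufficiently close operators, in particular $A_n$ for large $n$, would again lie in $\Phi_\pm(X)$, contradicting $A_n\in\Phi(X)$.

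More precisely, I would argue as follows. Suppose $A$ is semi-Fredholm but not Fredholm. Then either $A\in\Phi_+(X)$ (i.e. $A$ is $n$-normal with $d(A;X)=\infty$) or $A\in\Phi_-(X)$ (i.e. $A$ is $d$-normal with $n(A;X)=\infty$); in either case $A\in\Phi_\pm(X)$. By Theorem~\ref{th:stability}(b) there is an $\eps=\eps(A)>0$ such that $A+D\in\Phi_\pm(X)$ whenever $\|D\|_{\cB(X)}<\eps$. Choose $n$ large enough that simultaneously $\|A_n-A\|_{\cB(X)}<\eps$ and $A_n\in\Phi(X)$; writing $A_n=A+(A_n-A)$ we obtain $A_n\in\Phi_\pm(X)$. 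But $\Phi(X)\cap\big(\Phi_+(X)\cup\Phi_-(X)\big)=\varnothing$: an operator in $\Phi_+(X)$ has infinite codimension of its image while a Fredholm operator has finite codimension, and likewise an operator in $\Phi_-(X)$ has an infinite-dimensional kernel. This contradiction shows $A$ is Fredholm.

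The only mildly delicate point is justifying that the ``semi-Fredholm but not Fredholm'' case is captured by $\Phi_\pm(X)$, which is immediate from the definitions in Section~\ref{sec:semi-Fredholm}: $n$-normality with finite $d(A;X)$ together with $d$-normality is exactly Fredholmness, so if $A$ is $n$-normal (resp.\ $d$-normal) but not Fredholm, then $d(A;X)=\infty$ (resp.\ $n(A;X)=\infty$), i.e.\ $A\in\Phi_+(X)$ (resp.\ $A\in\Phi_-(X)$). No real obstacle is expected here; the lemma is essentially a packaging of Theorem~\ref{th:stability}(b) with the observation that the classes $\Phi_\pm(X)$ are disjoint from $\Phi(X)$, and the proof is short.
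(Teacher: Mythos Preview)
Your argument is correct and follows essentially the same route as the paper: assume $A$ is semi-Fredholm but not Fredholm, conclude $A\in\Phi_+(X)\cup\Phi_-(X)$, invoke Theorem~\ref{th:stability}(b) to force $A_n\in\Phi_\pm(X)$ for large $n$, and reach a contradiction with $A_n\in\Phi(X)$. The additional explanations you give (why non-Fredholm semi-Fredholm operators lie in $\Phi_\pm(X)$, and why $\Phi(X)$ is disjoint from $\Phi_\pm(X)$) are correct elaborations of steps the paper leaves implicit.
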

\begin{proof}
Assume $A$ is semi-Fredholm, but not Fredholm. Then either
$A\in\Phi_-(X)$ or $A\in\Phi_+(X)$. By Theorem~\ref{th:stability}(b), either
$A_n\in\Phi_-(X)$ or $A_n\in\Phi_+(X)$ for all sufficiently large $n$. That is,
$A_n$ are not Fredholm. This contradicts the hypothesis.
\end{proof}
We refer to the monograph by Gohberg and Krupnik \cite{GK92} for a detailed
presentation of the theory of semi-Fredholm operators on Banach spaces.
\subsection{Semi-Fredholmness of block operators}
Let a Banach space $X$ be represented as the direct sum of its subspaces
$X=X_1\dot{+}X_2$. Then every operator $A\in\cB(X)$ can be written in the
form of an operator matrix
\[
A=\left[\begin{array}{cc}
A_{11} & A_{12} \\ A_{21} & A_{22}
\end{array}\right],
\]
where $A_{ij}\in\cB(X_j,X_i)$ and $i,j=1,2$. The following result is stated
without proof in \cite{Spitkovsky80}. Its proof is given in \cite{Spitkovsky81}
(see also \cite[Theorem~1.12]{LS87}).
\begin{theorem}\label{th:Spitkovsky}
\begin{enumerate}
\item[{\rm(a)}]
Suppose $A_{21}$ is compact. If $A$ is $n$-normal ($d$-normal), then $A_{11}$
(resp. $A_{22}$) is $n$-normal (resp. $d$-normal).
\item[{\rm(b)}]
Suppose $A_{12}$ or $A_{21}$ is compact. If $A_{11}$ (resp. $A_{22}$) is Fredholm,
then $A_{22}$ (resp. $A_{11}$) is $n$-normal, $d$-normal, Fredholm if and only
if $A$ has the corresponding property.
\end{enumerate}
\end{theorem}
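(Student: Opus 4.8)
The plan is to base everything on one quantitative characterization of $n$-normality and to dualize it for $d$-normality, and then to dispatch part~(b) by purely formal row and column operations modulo compact operators. The characterization I would record as a short classical lemma (see \cite{GK92}) is: an operator $B\in\cB(Y,Z)$ is $n$-normal if and only if there exist $c>0$ and a compact operator $K$ on $Y$ with $\|By\|+\|Ky\|\ge c\|y\|$ for all $y\in Y$ (``bounded below modulo a compact''); dually, $B$ is $d$-normal if and only if $B^*$ is $n$-normal, which via the closed range theorem is the same inequality for $B^*$. Throughout I write $J_i\colon X_i\to X$ for the inclusions and $P_i\colon X\to X_i$ for the complementary bounded projections, so that $A_{ij}=P_iAJ_j$; since the norm on $X=X_1\dot{+}X_2$ is only equivalent to (not equal to) any combination of the summand norms, every estimate is read up to the constants $\|J_i\|,\|P_i\|$.

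For part~(a), suppose $A$ is $n$-normal and $A_{21}$ is compact. Fix $c>0$ and a compact $K$ with $\|Az\|+\|Kz\|\ge c\|z\|$ for all $z\in X$, and specialize to $z=J_1x_1$ with $x_1\in X_1$. Since $AJ_1x_1=J_1A_{11}x_1+J_2A_{21}x_1$ and $\|J_1x_1\|\ge\|x_1\|/\|P_1\|$, this gives
\[
\|J_1\|\,\|A_{11}x_1\|+\|J_2\|\,\|A_{21}x_1\|+\|KJ_1x_1\|\ \ge\ c\,\|J_1x_1\|\ \ge\ c'\,\|x_1\|
\]
with $c'=c/\|P_1\|>0$. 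Now $A_{21}$ is compact and $KJ_1$ is compact, so $\kappa x_1:=(A_{21}x_1,KJ_1x_1)$ defines a compact operator; after dividing by $\|J_1\|$ and absorbing constants the inequality reads $\|A_{11}x_1\|+C\|\kappa x_1\|\ge c''\|x_1\|$, whence $A_{11}$ is $n$-normal. The identical computation with the roles of $1$ and $2$ interchanged shows that if $A$ is $n$-normal and $A_{12}$ is compact then $A_{22}$ is $n$-normal. Applying this mirror statement to $A^*$, whose $(i,j)$ block is $A_{ji}^*$ and whose $(1,2)$ block $A_{21}^*$ is compact, shows that $A_{22}^*$ is $n$-normal, and by the closed range theorem $A_{22}$ is then $d$-normal.

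For part~(b), assume first $A_{21}$ compact and $A_{11}$ Fredholm, and let $R_{11}$ be a regularizer of $A_{11}$ from Theorem~\ref{th:regularization}, which is itself Fredholm. Modulo compact operators we may drop $A_{21}$, reducing $A$ to its upper triangular part. Left multiplication by the Fredholm operator $F$ that acts as $R_{11}$ on $X_1$ and as $I$ on $X_2$ replaces the $(1,1)$ block $R_{11}A_{11}$ by $I$ modulo a compact, and subsequent right multiplication by the invertible (hence Fredholm) operator $G$ whose only off-diagonal entry is the $(1,2)$ block $-R_{11}A_{12}$ clears that corner, yielding
\[
FAG\ \equiv\ \left[\begin{array}{cc}I&0\\0&A_{22}\end{array}\right]\qquad(\mbox{modulo compact}).
\]
(When instead $A_{12}$ is compact one drops $A_{12}$ and uses two left multiplications, reaching the same diagonal form.) The diagonal operator is $n$-normal, $d$-normal, or Fredholm exactly when $A_{22}$ is, since its kernel, range and cokernel split off the first coordinate. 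Finally Theorem~\ref{th:Atkinson-Yood}(a)--(c) shows that multiplication by the Fredholm operators $F,G$ and perturbation by a compact operator preserve each of $\Phi_+(X)$, $\Phi_-(X)$, $\Phi(X)$ in both directions (for the converse one multiplies by regularizers of $F$ and $G$), so $A$ enjoys any one of the three properties if and only if $A_{22}$ does. The ``resp.'' statements, with $A_{22}$ Fredholm and $A_{11}$ as the conclusion, follow from the symmetric reduction to the diagonal operator acting as $A_{11}$ on $X_1$ and as $I$ on $X_2$.

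I expect the main obstacle to be part~(a): setting up and correctly applying the ``bounded below modulo compact'' characterization, combining $A_{21}$ and $KJ_1$ into a single compact operator, and transferring the $d$-normal case to the adjoint via the closed range theorem and the block structure of $A^*$, all while tracking the non-canonical norm on $X_1\dot{+}X_2$. By contrast part~(b) is essentially formal once Theorems~\ref{th:Atkinson-Yood} and~\ref{th:regularization} are available; the only genuine care is verifying that the equivalences hold in both directions for all three operator classes at once.
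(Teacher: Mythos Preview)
The paper does not actually prove this theorem: it attributes the statement to \cite{Spitkovsky80} and refers to \cite{Spitkovsky81} and \cite[Theorem~1.12]{LS87} for the proof, without reproducing any argument. So there is no in-paper proof to compare your proposal against.

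That said, your proposal is a correct and self-contained argument along the standard lines one finds in those references. The ``bounded below modulo a compact'' characterization of $n$-normality is exactly the right tool for part~(a), and your dualization via $A^*$ (noting that $(A^*)_{12}=A_{21}^*$ is compact) cleanly yields the $d$-normal half. For part~(b) the row/column elimination to $\mathrm{diag}(I,A_{22})$ modulo compacts, together with Theorems~\ref{th:Atkinson-Yood} and~\ref{th:regularization}, is the expected reduction; your remark that the converse direction comes from multiplying by regularizers of $F$ and $G$ is the point that is sometimes glossed over, and it is good that you make it explicit. One small cosmetic note: in the parenthetical about the case $A_{12}$ compact, after dropping $A_{12}$ the matrix is lower triangular, so one needs a \emph{right} multiplication by $\mathrm{diag}(R_{11},I)$ followed by a \emph{left} multiplication to clear the $(2,1)$ corner, not ``two left multiplications''; this does not affect the argument.
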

\section{Singular integrals on weighted variable Lebesgue spaces}
\label{sec:SIO}
\subsection{Duality of weighted variable Lebesgue spaces}
Suppose $\Gamma$ is a rectifiable Jordan curve and $p:\Gamma\to(1,\infty)$
is a continuous function. Since $\Gamma$ is compact, we have
\[
1<\underline{p}:=\min_{t\in\Gamma}p(t),
\quad
\overline{p}:=\max_{t\in\Gamma}p(t)<\infty.
\]
Define the conjugate exponent $p^*$ for the exponent $p$ by
\[
p^*(t):=\frac{p(t)}{p(t)-1}\quad (t\in\Gamma).
\]
Suppose $\varrho$ is a Khvedelidze weight. If $\varrho\equiv 1$, then we will
write $L^{p(\cdot)}(\Gamma)$ and $\|\cdot\|_{p(\cdot)}$ instead of
$L^{p(\cdot)}(\Gamma,1)$ and $\|\cdot\|_{p(\cdot),1}$, respectively.
\begin{theorem}[see \cite{KR91}, Theorem~2.1]
\label{th:Hoelder}
If $f\in L^{p(\cdot)}(\Gamma)$ and $g\in L^{p^*(\cdot)}(\Gamma)$, then
$fg\in L^1(\Gamma)$ and
\[
\|fg\|_1\le (1+1/\underline{p}-1/\overline{p})\,\|f\|_{p(\cdot)}\|g\|_{p^*(\cdot)}.
\]
\end{theorem}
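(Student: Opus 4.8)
The plan is to derive the inequality from the pointwise Young inequality applied with the \emph{variable} exponent $p=p(\tau)$ at each point of $\Gamma$, after normalizing $f$ and $g$ to have unit norm. First I would dispose of the trivial cases: if $\|f\|_{p(\cdot)}=0$ or $\|g\|_{p^*(\cdot)}=0$, then $f=0$ or $g=0$ almost everywhere, whence $fg=0\in L^1(\Gamma)$ and both sides of the asserted inequality vanish. So assume both norms are positive and put $F:=f/\|f\|_{p(\cdot)}$ and $G:=g/\|g\|_{p^*(\cdot)}$, so that $\|F\|_{p(\cdot)}=\|G\|_{p^*(\cdot)}=1$.

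The key preliminary step, and the one I expect to be the main (if modest) obstacle, is the \emph{unit-ball property}: the equality $\|F\|_{p(\cdot)}=1$ forces the modular bound
\[
\int_\Gamma |F(\tau)|^{p(\tau)}\,|d\tau|\le 1,
\]
and likewise $\int_\Gamma |G(\tau)|^{p^*(\tau)}\,|d\tau|\le 1$. To prove this I would pick $\lambda_n\downarrow 1$ with $\int_\Gamma |F(\tau)/\lambda_n|^{p(\tau)}\,|d\tau|\le 1$, which is possible by the definition of the Luxemburg--Nakano norm as an infimum; then, since $x\mapsto x^{p(\tau)}$ is increasing and $|F|/\lambda_n$ increases pointwise to $|F|$ as $\lambda_n\downarrow 1$, the integrands increase pointwise to $|F|^{p(\tau)}$, and the claim follows by the monotone convergence theorem. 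Here the finiteness $\overline{p}<\infty$, guaranteed by the compactness of $\Gamma$ and the continuity of $p$, ensures that the exponents, and hence all the integrals involved, are well behaved.

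Next I would apply Young's inequality $ab\le a^{p(\tau)}/p(\tau)+b^{p^*(\tau)}/p^*(\tau)$ pointwise with $a=|F(\tau)|$ and $b=|G(\tau)|$, and integrate over $\Gamma$:
\[
\int_\Gamma |F(\tau)G(\tau)|\,|d\tau|
\le \int_\Gamma \frac{|F(\tau)|^{p(\tau)}}{p(\tau)}\,|d\tau|
+\int_\Gamma \frac{|G(\tau)|^{p^*(\tau)}}{p^*(\tau)}\,|d\tau|.
\]
Since $p(\tau)\ge\underline{p}$ gives $1/p(\tau)\le 1/\underline{p}$, and $p(\tau)\le\overline{p}$ gives $1/p^*(\tau)=1-1/p(\tau)\le 1-1/\overline{p}$, combining these with the two modular bounds $\le 1$ yields
\[
\int_\Gamma |F(\tau)G(\tau)|\,|d\tau|
\le \frac{1}{\underline{p}}+\Big(1-\frac{1}{\overline{p}}\Big)
=1+\frac{1}{\underline{p}}-\frac{1}{\overline{p}}.
\]

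Finally I would undo the normalization. The bound above shows $FG\in L^1(\Gamma)$, hence $fg=\|f\|_{p(\cdot)}\|g\|_{p^*(\cdot)}\,FG\in L^1(\Gamma)$; multiplying the displayed inequality through by $\|f\|_{p(\cdot)}\|g\|_{p^*(\cdot)}$ gives exactly
\[
\|fg\|_1\le\Big(1+\frac{1}{\underline{p}}-\frac{1}{\overline{p}}\Big)\,\|f\|_{p(\cdot)}\|g\|_{p^*(\cdot)},
\]
which is the assertion. The whole argument is elementary once the unit-ball property is in place; that property is the only point that uses, beyond measure-theoretic routine, the structure of the Luxemburg--Nakano norm together with $\overline{p}<\infty$.
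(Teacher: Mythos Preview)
Your argument is correct and is precisely the standard proof of the H\"older inequality for variable exponent Lebesgue spaces: normalize, use the unit-ball property $\|F\|_{p(\cdot)}=1\Rightarrow \int_\Gamma |F|^{p(\cdot)}\le 1$ (via monotone convergence as you do), apply Young's inequality pointwise, and bound $1/p(\tau)\le 1/\underline{p}$ and $1/p^*(\tau)\le 1-1/\overline{p}$. The paper does not supply its own proof of this statement; it merely cites \cite[Theorem~2.1]{KR91}, whose proof is exactly the one you have written.
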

The above H\"older type inequality in the more general setting of Musielak-Orlicz
spaces is contained in \cite[Theorem~3.13]{Musielak83}.
\begin{theorem}\label{th:linear-functional}
The general form of a linear functional on $L^{p(\cdot)}(\Gamma,\varrho)$
is given by
\[
G(f)=\int_\Gamma f(\tau)\overline{g(\tau)}\,|d\tau|
\quad
(f\in L^{p(\cdot)}(\Gamma,\varrho)),
\]
where  $g\in L^{p^*(\cdot)}(\Gamma,\varrho^{-1})$. The norms in the dual space
$[L^{p(\cdot)}(\Gamma,\varrho)]^*$ and in the space $L^{p^*(\cdot)}(\Gamma,\varrho^{-1})$
are equivalent.
\end{theorem}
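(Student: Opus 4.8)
The plan is to reduce the weighted assertion to the unweighted duality $[L^{p(\cdot)}(\Gamma)]^*\cong L^{p^*(\cdot)}(\Gamma)$ by means of an isometric multiplication operator, and then to prove the latter by a Radon--Nikodym representation combined with the converse of the H\"older inequality of Theorem~\ref{th:Hoelder}.

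First I would exploit the weight. Since a Khvedelidze weight $\varrho$ is real-valued, positive and finite almost everywhere on $\Gamma$, it follows straight from the definition of the Luxemburg--Nakano norm that $\|f\|_{p(\cdot),\varrho}=\|\varrho f\|_{p(\cdot)}$; hence $M_\varrho\colon f\mapsto\varrho f$ is an isometric isomorphism of $L^{p(\cdot)}(\Gamma,\varrho)$ onto $L^{p(\cdot)}(\Gamma)$ with inverse $M_{\varrho^{-1}}$, and likewise $M_{\varrho^{-1}}$ maps $L^{p^*(\cdot)}(\Gamma)$ isometrically onto $L^{p^*(\cdot)}(\Gamma,\varrho^{-1})$. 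Given a bounded functional $G$ on $L^{p(\cdot)}(\Gamma,\varrho)$, I would pass to $\widetilde G:=G\circ M_{\varrho^{-1}}$, a bounded functional on $L^{p(\cdot)}(\Gamma)$ with $\|\widetilde G\|=\|G\|$. Once $\widetilde G$ is represented as $\widetilde G(h)=\int_\Gamma h\,\overline{g_0}\,|d\tau|$ with $g_0\in L^{p^*(\cdot)}(\Gamma)$ and $\|g_0\|_{p^*(\cdot)}$ equivalent to $\|\widetilde G\|$, substituting $h=\varrho f$ and using that $\varrho$ is real gives $G(f)=\int_\Gamma f\,\overline{\varrho g_0}\,|d\tau|$; so $g:=\varrho g_0$ does the job and $\|g\|_{p^*(\cdot),\varrho^{-1}}=\|g_0\|_{p^*(\cdot)}$. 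In this way the full statement, norm equivalence included, is inherited from the unweighted case.

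The easy half of the unweighted duality is immediate from H\"older: for $g_0\in L^{p^*(\cdot)}(\Gamma)$, Theorem~\ref{th:Hoelder} gives $\bigl|\int_\Gamma h\,\overline{g_0}\,|d\tau|\bigr|\le(1+1/\underline p-1/\overline p)\|h\|_{p(\cdot)}\|g_0\|_{p^*(\cdot)}$, so the pairing is a bounded functional of norm at most a constant times $\|g_0\|_{p^*(\cdot)}$. For the representation I would use that, because $\Gamma$ is compact and $1<\underline p\le\overline p<\infty$, simple functions are dense in $L^{p(\cdot)}(\Gamma)$ and $\|\chi_E\|_{p(\cdot)}\le|E|^{1/\overline p}\to0$ as $|E|\to0$. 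Consequently $\nu(E):=\widetilde G(\chi_E)$ is a finite, countably additive, $|d\tau|$-absolutely continuous set function, so the Radon--Nikodym theorem supplies $g_0\in L^1(\Gamma)$ with $\widetilde G(\chi_E)=\int_E\overline{g_0}\,|d\tau|$; by linearity and density this extends to $\widetilde G(h)=\int_\Gamma h\,\overline{g_0}\,|d\tau|$ for all bounded $h$.

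The main obstacle, and the genuinely variable-exponent part, is to upgrade $g_0\in L^1(\Gamma)$ to $g_0\in L^{p^*(\cdot)}(\Gamma)$ with the bound $\|g_0\|_{p^*(\cdot)}\le C\|\widetilde G\|$. I would establish the converse H\"older inequality
\[
\|g_0\|_{p^*(\cdot)}\le C\sup\Big\{\Big|\int_\Gamma h\,\overline{g_0}\,|d\tau|\Big|:\ h\ \text{simple},\ \|h\|_{p(\cdot)}\le1\Big\},
\]
whose right-hand side is dominated by $\|\widetilde G\|$. To prove it I would test against the truncations $h_n:=\operatorname{sgn}(g_0)\,|g_0|^{p^*(\cdot)-1}\chi_{\{|g_0|\le n\}}$; the identity $(p^*(\tau)-1)p(\tau)=p^*(\tau)$ makes both the modular $\int_\Gamma|h_n|^{p(\tau)}|d\tau|$ and the pairing $\int_\Gamma h_n\,\overline{g_0}\,|d\tau|$ equal to the truncated modular $I_n:=\int_{\{|g_0|\le n\}}|g_0|^{p^*(\tau)}|d\tau|$. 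The elementary inequalities between the modular and the Luxemburg--Nakano norm, valid since $1<\underline p\le\overline p<\infty$, then bound $\|h_n\|_{p(\cdot)}$ by a power of $I_n$, which turns $I_n\le\|\widetilde G\|\,\|h_n\|_{p(\cdot)}$ into a uniform bound for $I_n$; letting $n\to\infty$ and applying the monotone convergence theorem controls the modular of $g_0$ at exponent $p^*$, hence $\|g_0\|_{p^*(\cdot)}$, by a constant multiple of $\|\widetilde G\|$. This converse estimate is precisely the associate-space half of the variable-exponent duality \cite{KR91}, and together with the reduction of the second paragraph it yields both the integral representation of $G$ and the asserted equivalence of the norms of $[L^{p(\cdot)}(\Gamma,\varrho)]^*$ and $L^{p^*(\cdot)}(\Gamma,\varrho^{-1})$.
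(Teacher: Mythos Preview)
The paper does not prove this theorem at all: immediately after the statement it simply records that ``the above result can be extracted from \cite[Corollary~13.14]{Musielak83}'' and, for $\varrho\equiv1$, refers to \cite[Corollary~2.7]{KR91}. So there is no in-paper argument to compare against.

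Your proposal is correct and is essentially the standard proof behind those citations. The reduction to the unweighted case via the isometry $M_\varrho\colon L^{p(\cdot)}(\Gamma,\varrho)\to L^{p(\cdot)}(\Gamma)$ is the obvious move (and is exactly why the weighted statement follows at once from the unweighted one). For the unweighted duality your Radon--Nikodym step, followed by the truncation $h_n=\operatorname{sgn}(g_0)\,|g_0|^{p^*(\cdot)-1}\chi_{\{|g_0|\le n\}}$ and the modular identity $(p^*-1)p=p^*$, is precisely the argument of Kov\'a\v cik--R\'akosn\'{\i}k \cite{KR91}; the Musielak--Orlicz version \cite{Musielak83} is organized the same way in the language of complementary Young functions. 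One minor point worth making explicit when you write it out: you need $\overline{p}<\infty$ both for the density of simple (or bounded) functions in $L^{p(\cdot)}(\Gamma)$ and for the modular--norm inequalities you invoke; this is available here because $p$ is continuous on the compact curve $\Gamma$. With that recorded, your sketch constitutes a complete proof of what the paper merely cites.
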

The above result can be extracted from \cite[Corollary~13.14]{Musielak83}.
For the case $\varrho=1$, see also \cite[Corollary~2.7]{KR91}.
\subsection{Smirnov classes and Hardy type subspaces}
Let $\Gamma$ be a rectifiable Jordan curve in the complex plane $\C$. We denote
by $D_+$ and $D_-$ the bounded and unbounded components of $\C\setminus\Gamma$,
respectively. We orient $\Gamma$ counter-clockwise. Without loss of generality
we assume that $0\in D_+$. A function $f$ analytic in $D_+$ is said to be in
the Smirnov class $E^q(D_+)$ ($0<q<\infty$) if there exists a sequence of
rectifiable Jordan curves $\Gamma_n$ in $D_+$ tending to the boundary $\Gamma$
in the sense that $\Gamma_n$ eventually surrounds each compact subset of $D_+$
such that
\begin{equation}\label{eq:Smirnov}
\sup_{n\ge 1}\int_{\Gamma_n}|f(z)|^q|dz|<\infty.
\end{equation}
The Smirnov class $E^q(D_-)$ is the set of all analytic functions in
$D_-\cup\{\infty\}$ for which \eqref{eq:Smirnov} holds with some sequence
of curves $\Gamma_n$ tending to the boundary in the sense that every compact
subset of $D_-\cup\{\infty\}$ eventually lies outside $\Gamma_n$. We denote
by $E_0^q(D_-)$ the set of functions in $E^q(D_-)$ which vanish at infinity.
The functions in $E^q(D_\pm)$ have nontangential boundary values almost
everywhere on $\Gamma$ (see, e.g. \cite[Theorem~10.3]{Duren70}). We will
identify functions in $E^q(D_\pm)$ with their nontangential boundary values.
The next result is a consequence of the H\"older inequality.
\begin{lemma}\label{le:Hoelder-Smirnov}
Let $\Gamma$ be a rectifiable Jordan curve. Suppose $0<q_1,q_2,\dots, q_r<\infty$
and $f_j \in E^{q_j}(D_\pm)$ for all $j\in\{1,2,\dots,r\}$. Then
$f_1f_2\dots f_r\in E^q(D_\pm)$, where
$1/q=1/q_1+1/q_2+\dots+1/q_r$.
\end{lemma}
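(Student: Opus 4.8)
The plan is to prove Lemma~\ref{le:Hoelder-Smirnov} by combining the defining approximation property of the Smirnov classes with the finite-exponent H\"older inequality applied on each approximating curve $\Gamma_n$. First I would treat the case $f_j\in E^{q_j}(D_+)$. Since the notion of "a sequence of rectifiable Jordan curves $\Gamma_n$ tending to $\Gamma$ from inside" is the same for all the $f_j$ (it only requires $\Gamma_n$ to eventually surround each compact subset of $D_+$), I would first remark that we may use a \emph{common} sequence $\Gamma_n$ witnessing \eqref{eq:Smirnov} simultaneously for $f_1,\dots,f_r$: given individual sequences, one can interleave or, more robustly, choose any fixed exhausting sequence of such curves (for instance level curves of a conformal map from the disk) and observe that \eqref{eq:Smirnov} for a single $f_j$ with respect to one such sequence forces it for every such sequence — this last fact is standard in the theory of Smirnov classes (see e.g.\ \cite[Theorem~10.1]{Duren70}). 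The product $g:=f_1f_2\cdots f_r$ is analytic in $D_+$, so it remains to bound $\sup_n\int_{\Gamma_n}|g|^q\,|dz|$.

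The key step is the pointwise-then-integral H\"older estimate. With $1/q=\sum_{j=1}^r 1/q_j$, set $p_j:=q_j/q>1$, so that $\sum_{j=1}^r 1/p_j=1$ and $|g|^q=\prod_{j=1}^r |f_j|^{q}=\prod_{j=1}^r\bigl(|f_j|^{q_j}\bigr)^{1/p_j}$. Integrating over $\Gamma_n$ and applying the classical $r$-term H\"older inequality with exponents $p_1,\dots,p_r$ (with respect to the length measure $|dz|$ on $\Gamma_n$) gives
\[
\int_{\Gamma_n}|g(z)|^q\,|dz|
\le\prod_{j=1}^r\left(\int_{\Gamma_n}|f_j(z)|^{q_j}\,|dz|\right)^{1/p_j}.
\]
Taking the supremum over $n$ and using \eqref{eq:Smirnov} for each $f_j$ shows $\sup_n\int_{\Gamma_n}|g|^q\,|dz|<\infty$, hence $g\in E^q(D_+)$. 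The case $D_-$ is identical after the usual reduction: functions in $E^{q_j}(D_-)$ are analytic on $D_-\cup\{\infty\}$, one uses a common exhausting sequence of curves that eventually leaves every compact subset of $D_-\cup\{\infty\}$ (e.g.\ images of circles under a conformal map of $\{|z|>1\}$ onto $D_-\cup\{\infty\}$), and the same H\"older computation on $\Gamma_n$ applies verbatim; the product is again analytic at $\infty$ because each factor is.

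The main obstacle — really the only subtlety — is the bookkeeping around the approximating curves: one must make sure the \emph{same} sequence $\Gamma_n$ can be used for all $r$ factors, since a priori each $f_j$ comes with its own witnessing sequence. I would handle this by invoking the standard fact that membership in $E^q(D_\pm)$ is independent of the particular admissible exhausting sequence — equivalently, that one may always test \eqref{eq:Smirnov} against the conformal image of concentric circles — after which the H\"older step is completely routine. No properties of boundary values are needed for the statement as phrased (it asserts only $f_1\cdots f_r\in E^q(D_\pm)$), so the argument stays entirely on the approximating curves.
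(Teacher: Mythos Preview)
Your proposal is correct and follows exactly the route the paper indicates: the paper gives no detailed proof but simply remarks that ``the next result is a consequence of the H\"older inequality,'' and your argument makes this precise by applying the $r$-term H\"older inequality on a common exhausting sequence of curves. The one subtlety you flag---that the same approximating sequence can be used for all factors---is handled correctly via the standard independence of the defining condition from the choice of admissible sequence (cf.\ \cite[Theorem~10.1]{Duren70}).
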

Let $\cR$ denote the set of all rational functions without poles on $\Gamma$.
\begin{theorem}\label{th:Lusin-Privalov}
Let $\Gamma$ be a rectifiable Jordan curve and $0<q<\infty$. If $f$ belongs
to $E^q(D_\pm)+\cR$ and its nontangential boundary values vanish on a subset
$\gamma\subset\Gamma$ of positive measure, then $f$ vanishes identically in
$D_\pm$.
\end{theorem}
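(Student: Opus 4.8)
The plan is to reduce the assertion to the classical Lusin--Privalov boundary uniqueness theorem for the Hardy space $H^q$ on the unit disk, after first removing the rational summand by multiplying through by a carefully chosen rational factor. Throughout I treat $f=g+r$ with $g\in E^q(D_\pm)$ and $r\in\cR$ fixed, and I read the conclusion ``$f$ vanishes identically in $D_\pm$'' as the statement that the meromorphic function $g+r$ is identically zero on $D_\pm$.

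First I would clear the poles of $r$ that lie in the relevant domain. I would construct a rational function $\psi\in\cR$ that is bounded and analytic on $\overline{D_\pm}$ (and at the point at infinity, in the case of $D_-$), is nonvanishing on $\Gamma$, and has a zero at each pole of $r$ inside $D_\pm$ of at least the matching multiplicity. For $D_+$ one may take $\psi$ to be the polynomial vanishing at the poles of $r$ in $D_+$; for the unbounded domain $D_-$ a polynomial is inadmissible, since it is unbounded at infinity, so I would instead use a ratio such as $\prod_j\big((z-b_j)/(z-a)\big)^{m_j}$, where the $b_j$ are the poles of $r$ in $D_-$ and $a$ is a fixed point of $D_+$, increasing the order of the pole at $a$ if $r$ is not proper so that $\psi$ (and $\psi r$) stay bounded at infinity. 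By construction $\psi r$ has no poles in $\overline{D_\pm}$, so it is a rational function bounded on $\overline{D_\pm}$ and therefore lies in $E^q(D_\pm)$; likewise $\psi g\in E^q(D_\pm)$, because $\psi$ is bounded and analytic on $D_\pm$ and Lemma~\ref{le:Hoelder-Smirnov} shows the Smirnov classes are stable under such products. Hence $\psi f=\psi g+\psi r\in E^q(D_\pm)$, and since $\psi$ is finite and nonzero on $\Gamma$ while the boundary values of $f$ vanish on $\gamma$, the boundary values of $\psi f$ vanish on the same set $\gamma$ of positive measure.

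Next I would transfer the problem to the unit disk $\mathbb{D}$ via the Riemann map $\varphi$ onto $D_\pm$. Because $\Gamma$ is rectifiable, $\varphi$ extends to a homeomorphism of the closures, $\varphi'\in H^1(\mathbb{D})$, and the boundary correspondence is absolutely continuous with nonzero nontangential limits almost everywhere; consequently $\varphi$ carries subsets of $\partial\mathbb{D}$ of positive measure to subsets of $\Gamma$ of positive measure and conversely, so the preimage of $\gamma$ again has positive measure. Under the standard identification $h\mapsto(h\circ\varphi)(\varphi')^{1/q}$, the class $E^q(D_\pm)$ corresponds to $H^q(\mathbb{D})$, and $\psi f$ is carried to a function $F\in H^q(\mathbb{D})$ whose nontangential boundary values vanish on a subset of $\partial\mathbb{D}$ of positive measure. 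Every function in $H^q(\mathbb{D})$ is of bounded type, so the classical Lusin--Privalov (F. and M. Riesz) uniqueness theorem forces $F\equiv0$, whence $\psi f\equiv0$ in $D_\pm$. As $\psi$ has only finitely many zeros, $f$ vanishes identically in $D_\pm$.

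The main obstacle is twofold: making the reduction of the first step uniform across the two cases, where the unbounded domain $D_-$ and the possible pole of $r$ at infinity force the use of a genuine rational factor rather than a polynomial; and, more essentially, justifying that the conformal boundary correspondence preserves null sets. The latter is exactly where the rectifiability of $\Gamma$ is indispensable, and I would isolate it as the facts that $\varphi'\in H^1(\mathbb{D})$ and $|\varphi'|>0$ almost everywhere on $\partial\mathbb{D}$, so that arc length on $\Gamma$ pulls back to $|\varphi'|\,d\theta$ and sets of positive linear measure correspond on the two boundaries. Everything else reduces to the disk theorem, which I would cite rather than reprove.
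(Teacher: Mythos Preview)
Your argument is correct. The paper does not actually prove this theorem: it simply states that the result ``follows from the Lusin--Privalov theorem for meromorphic functions'' and cites Privalov's monograph. That black-box theorem already covers functions of the form $g+r$ with $g\in E^q(D_\pm)$ and $r$ rational, since such functions are meromorphic in $D_\pm$ with nontangential boundary values; the paper therefore sees no need to strip off the rational part or to pass explicitly to the disk.

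Your route is a legitimate unpacking of that citation. By multiplying through by a rational $\psi$ that is bounded and analytic on $D_\pm$, nonvanishing on $\Gamma$, and kills the poles of $r$ (including the one at infinity in the $D_-$ case), you reduce to $\psi f\in E^q(D_\pm)$; the conformal transfer then lands you in $H^q(\mathbb{D})$, where the standard uniqueness theorem applies. What this buys is a more self-contained argument that relies only on the well-known $H^q$ theory on the disk and on the absolute continuity of the boundary correspondence for rectifiable $\Gamma$, rather than on the general meromorphic version of Privalov's theorem. The cost is the extra bookkeeping with $\psi$, where the $D_-$ case genuinely requires a rational rather than polynomial factor, as you note.

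One minor correction: your appeal to Lemma~\ref{le:Hoelder-Smirnov} to justify $\psi g\in E^q(D_\pm)$ is off, since that lemma only yields membership in a smaller Smirnov class $E^{q'}$ with $1/q'=1/q+1/q_\psi$. The statement you need is the simpler one that a bounded analytic function times an $E^q$ function is again in $E^q$, which follows immediately from the defining integral condition.
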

This result follows from the Lusin-Privalov
theorem for meromorphic functions (see, e.g. \cite[p.~292]{Privalov50}).

We refer to the monographs by Duren \cite{Duren70} and Privalov \cite{Privalov50}
for a detailed exposition of the theory of Smirnov classes over domains
with rectifiable boundary.
\begin{lemma}\label{le:basic}
Let $\Gamma$ be a Carleson Jordan curve, let $p:\Gamma\to(1,\infty)$ be a
continuous function satisfying {\rm(\ref{eq:Dini-Lipschitz})}, and let $\varrho$
be a Khvedelidze weight satisfying {\rm(\ref{eq:Khvedelidze})}.
Then $P^2=P$ and $Q^2=Q$ on $L^{p(\cdot)}(\Gamma,\varrho)$.
\end{lemma}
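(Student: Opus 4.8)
The plan is to show that $S^2 = I$ on $L^{p(\cdot)}(\Gamma,\varrho)$, which immediately gives $P^2 = P$ and $Q^2 = Q$ since $P = (I+S)/2$ and $Q = (I-S)/2$. Recall that under the hypotheses of the lemma (Carleson curve, Dini–Lipschitz variable exponent, admissible Khvedelidze weight), Theorem~\ref{th:KPS} guarantees that $S$ is bounded on $L^{p(\cdot)}(\Gamma,\varrho)$. The identity $S^2 = I$ is classical on curves for the unweighted Lebesgue spaces, so the issue is purely one of establishing it on this nonstandard space by a density argument.

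First I would exhibit a dense subset of $L^{p(\cdot)}(\Gamma,\varrho)$ on which the identity $S^2f = f$ holds by elementary computation. The natural candidate is the set of rational functions $\cR$ with no poles on $\Gamma$. For such $f$, one has the partial fraction decomposition $f = f_+ + f_-$ with $f_+$ analytic in $D_+$ (a polynomial part plus the poles in $D_-$ reorganized appropriately—more precisely $f_+ \in E^q(D_+)$ for every $q$, being rational with no poles in $\overline{D_+}$) and $f_- \in E_0^q(D_-)$ the part with poles in $D_+$. The standard Plemelj–Sokhotski relations give $Sf_+ = f_+$ and $Sf_- = -f_-$, hence $Sf = f_+ - f_-$ and $S^2f = f_+ + f_- = f$. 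One must check that $\cR$ is dense in $L^{p(\cdot)}(\Gamma,\varrho)$: since continuous functions are dense (the space is built from a nonatomic finite measure with $p$ bounded, and weighted variable Lebesgue norms satisfy absolute continuity of the norm under our exponent bounds), and rational functions are dense in $C(\Gamma)$ by a Mergelyan/Runge-type argument for Jordan curves, density of $\cR$ follows.

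Then I would conclude by continuity: $S^2 - I$ is a bounded operator on $L^{p(\cdot)}(\Gamma,\varrho)$ (using boundedness of $S$ from Theorem~\ref{th:KPS}) that vanishes on the dense set $\cR$, hence $S^2 = I$ on the whole space. Consequently $P^2 = \tfrac14(I+S)^2 = \tfrac14(I + 2S + S^2) = \tfrac14(2I + 2S) = \tfrac12(I+S) = P$, and similarly $Q^2 = Q$.

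The main obstacle is the density of $\cR$ in $L^{p(\cdot)}(\Gamma,\varrho)$, or more precisely justifying that bounded operators agreeing on $\cR$ agree everywhere; this needs the fact that the norm in $L^{p(\cdot)}(\Gamma,\varrho)$ is absolutely continuous (equivalently, that simple functions, and then continuous functions, are dense), which holds because $\overline{p} < \infty$ and the Khvedelidze weight $\varrho$ together with $\varrho^{-1}$ produce a space with no singular part—this can be cited from the general theory of Musielak–Orlicz spaces \cite{Musielak83} or deduced from the dominated convergence theorem for the modular. Once density is in hand, everything else is the classical computation with the Plemelj formulas on rational functions, which requires only that the boundary values of Cauchy integrals of rational functions are computed pointwise and do not depend on the function space.
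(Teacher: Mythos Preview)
Your approach is correct: verify $S^2 = I$ on the rational functions $\cR$ via the Plemelj--Sokhotski relations (for rational functions this is a direct residue computation valid on any rectifiable Jordan curve), then extend to all of $L^{p(\cdot)}(\Gamma,\varrho)$ by density using the boundedness of $S$ from Theorem~\ref{th:KPS}. The paper gives no detailed proof of its own here, deducing the lemma from Theorem~\ref{th:KPS} together with \cite[Lemma~6.4]{Karlovich03}, and your sketch is exactly the standard argument that citation stands in for.
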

This result follows from Theorem~\ref{th:KPS} and \cite[Lemma~6.4]{Karlovich03}.

In view of Lemma~\ref{le:basic}, the Hardy type subspaces
$PL^{p(\cdot)}(\Gamma,\varrho)$, $QL^{p(\cdot)}(\Gamma,\varrho)$, and
$QL^{p(\cdot)}(\Gamma,\varrho)\stackrel{\cdot}{+}\C$ of $L^{p(\cdot)}(\Gamma,\varrho)$
are well defined. Combining Theorem~\ref{th:KPS} and \cite[Lemma~6.9]{Karlovich03}
we obtain the following.
\begin{lemma}\label{le:Hardy-Smirnov}
Let $\Gamma$ be a Carleson Jordan curve, let $p:\Gamma\to(1,\infty)$ be a
continuous function satisfying {\rm(\ref{eq:Dini-Lipschitz})}, and let $\varrho$
be a Khvedelidze weight satisfying {\rm(\ref{eq:Khvedelidze})}. Then
\[
\begin{split}
E^1(D_+)\cap L^{p(\cdot)}(\Gamma,\varrho)
&=
PL^{p(\cdot)}(\Gamma,\varrho),
\\
E_0^1(D_-)\cap L^{p(\cdot)}(\Gamma,\varrho)
&=
QL^{p(\cdot)}(\Gamma,\varrho),
\\
E^1(D_-)\cap L^{p(\cdot)}(\Gamma,\varrho)
&=
QL^{p(\cdot)}(\Gamma,\varrho)\stackrel{\cdot}{+}\C.
\end{split}
\]
\end{lemma}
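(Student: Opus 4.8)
The plan is to deduce all three equalities from the behaviour of the Cauchy transform together with the Sokhotski--Plemelj formulas, the only inputs peculiar to the weighted variable Lebesgue space being the continuous embedding $L^{p(\cdot)}(\Gamma,\varrho)\hookrightarrow L^1(\Gamma)$ and the boundedness of $S$ granted by Theorem~\ref{th:KPS}. \emph{Step~1 (embedding).} I would first observe that the Khvedelidze condition (\ref{eq:Khvedelidze}) is exactly what is needed for $\varrho\in L^{p(\cdot)}(\Gamma)$ and $\varrho^{-1}\in L^{p^*(\cdot)}(\Gamma)$: near a knot $t_k$ the local exponents of $|\varrho|^{p(\cdot)}$ and $|\varrho^{-1}|^{p^*(\cdot)}$ are $\lambda_k p(t_k)$ and $-\lambda_k p^*(t_k)$, and both exceed $-1$ precisely because $0<1/p(t_k)+\lambda_k<1$. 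Hence, writing $f=(f\varrho)\,\varrho^{-1}$ and applying the H\"older inequality of Theorem~\ref{th:Hoelder}, every $f\in L^{p(\cdot)}(\Gamma,\varrho)$ lies in $L^1(\Gamma)$; in addition $\C\subset L^{p(\cdot)}(\Gamma,\varrho)$. Since $S$ is bounded on $L^{p(\cdot)}(\Gamma,\varrho)$ by Theorem~\ref{th:KPS}, both $f$ and $Sf$ belong to $L^1(\Gamma)$ for every $f\in L^{p(\cdot)}(\Gamma,\varrho)$, and $P,Q$ are complementary projections by Lemma~\ref{le:basic}.

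\emph{Step~2 (Cauchy transform and Plemelj formulas).} For $f\in L^1(\Gamma)$ put $(\Phi f)(z):=\frac{1}{2\pi i}\int_\Gamma\frac{f(\tau)}{\tau-z}\,d\tau$ for $z\in\C\setminus\Gamma$, and write $f^+:=\Phi f|_{D_+}$, $f^-:=\Phi f|_{D_-}$; these are analytic and $f^-(\infty)=0$. The key step is the assertion that whenever $f\in L^1(\Gamma)$ and $Sf\in L^1(\Gamma)$ one has $f^+\in E^1(D_+)$, $f^-\in E_0^1(D_-)$, and the nontangential boundary values satisfy $f^+=Pf$, $f^-=-Qf$ almost everywhere on $\Gamma$; this is a known fact about Cauchy integrals on rectifiable Jordan curves (see, e.g., \cite{BK97}), and by Step~1 it applies to every $f\in L^{p(\cdot)}(\Gamma,\varrho)$. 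I will also use the standard Cauchy-type vanishing: if $\varphi\in E^1(D_+)$ then $\Phi\varphi\equiv 0$ on $D_-$, and if $\psi\in E_0^1(D_-)$ then $\Phi\psi\equiv 0$ on $D_+$.

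\emph{Step~3 (the three identities).} For $\supseteq$ in the first line, take $\varphi\in PL^{p(\cdot)}(\Gamma,\varrho)$; then $\varphi=P\varphi$, so by Step~2 the function $\varphi^+$ lies in $E^1(D_+)$ with boundary values $P\varphi=\varphi$, whence $\varphi\in E^1(D_+)\cap L^{p(\cdot)}(\Gamma,\varrho)$. For $\subseteq$, take $\varphi\in E^1(D_+)\cap L^{p(\cdot)}(\Gamma,\varrho)$; then $\Phi\varphi\equiv 0$ on $D_-$, so $-Q\varphi=\varphi^-=0$ by the Plemelj formula and $\varphi=P\varphi\in PL^{p(\cdot)}(\Gamma,\varrho)$. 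The second line is proved the same way with the roles of $D_+$ and $D_-$ interchanged, using $\Phi\psi\equiv 0$ on $D_+$ for $\psi\in E_0^1(D_-)$ together with the formula $f^+=Pf$. For the third line I would use that every element of $E^1(D_-)$ equals its value at infinity plus an element of $E_0^1(D_-)$, i.e. $E^1(D_-)=E_0^1(D_-)\stackrel{\cdot}{+}\C$; combined with $\C\subset L^{p(\cdot)}(\Gamma,\varrho)$ from Step~1 and the second line, this yields $E^1(D_-)\cap L^{p(\cdot)}(\Gamma,\varrho)=QL^{p(\cdot)}(\Gamma,\varrho)\stackrel{\cdot}{+}\C$. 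The sum on the right is direct because $1\in E^1(D_+)$ forces $Q1=0$ by the first line, hence $Qc=0$ for every constant $c$, so a constant lying in $QL^{p(\cdot)}(\Gamma,\varrho)$ must vanish.

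\emph{Main obstacle.} The only genuinely substantial point is the first assertion of Step~2: that $\Phi f$ really belongs to the Smirnov class $E^1(D_\pm)$ — not merely to the space of functions analytic in $D_\pm$ — and that the Plemelj formulas hold in the almost-everywhere sense on an arbitrary Carleson Jordan curve. This is where the harmonic analysis underlying Theorem~\ref{th:KPS} enters, and in the present setting it is exactly the content of \cite[Lemma~6.9]{Karlovich03}. Once it is available, everything else reduces to the algebra of the complementary projections $P,Q$ from Lemma~\ref{le:basic} and classical complex analysis of $E^1$-functions.
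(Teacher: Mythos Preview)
Your proposal is correct and takes essentially the same approach as the paper: the paper proves this lemma by a one-line appeal to Theorem~\ref{th:KPS} together with \cite[Lemma~6.9]{Karlovich03}, and you have simply unpacked what that combination amounts to, correctly isolating the Smirnov-class membership of the Cauchy transform and the a.e.\ Plemelj formulas on a Carleson curve as the substantive input from \cite{Karlovich03}. Your Step~1 embedding into $L^1(\Gamma)$ via H\"older is a slightly more direct route than the paper's later $L^{1+\varepsilon}$ embedding (Lemma~\ref{le:embedding}), but either suffices here.
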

\subsection{Singular integral operators on the dual space}
For a rectifiable Jordan curve $\Gamma$ we have $d\tau=e^{i\Theta_\Gamma(\tau)}|d\tau|$
where $\Theta_\Gamma(\tau)$ is the angle between the positively oriented real
axis and the naturally oriented tangent of $\Gamma$ at $\tau$ (which exists
almost everywhere). Let the operator $H_\Gamma$ be defined by
$(H_\Gamma\varphi)(t)=e^{-i\Theta_\Gamma(t)}\overline{\varphi(t)}$ for $t\in\Gamma$.
Note that $H_\Gamma$ is additive but
$H_\Gamma(\alpha\varphi)=\overline{\alpha}H_\Gamma\varphi$ for $\alpha\in\C$.
Evidently, $H_\Gamma^2=I$.

From Theorem~\ref{th:KPS} and \cite[Lemma~6.6]{Karlovich03} we get the following.
\begin{lemma}\label{le:S-adjoint}
Let $\Gamma$ be a Carleson Jordan curve, let $p:\Gamma\to(1,\infty)$ be a
continuous function satisfying {\rm(\ref{eq:Dini-Lipschitz})}, and let $\varrho$
be a Khvedelidze weight satisfying {\rm(\ref{eq:Khvedelidze})}. The adjoint
operator of $S\in\cB(L^{p(\cdot)}(\Gamma,\varrho))$ is the operator
$-H_\Gamma SH_\Gamma\in\cB(L^{p^*(\cdot)}(\Gamma,\varrho^{-1}))$.
\end{lemma}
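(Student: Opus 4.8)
The plan is to compute the adjoint of $S$ on $L^{p(\cdot)}(\Gamma,\varrho)$ by pairing with the duality bracket from Theorem~\ref{th:linear-functional}, namely $\langle f,g\rangle=\int_\Gamma f(\tau)\overline{g(\tau)}\,|d\tau|$, and to show that $\langle Sf,g\rangle=\langle f,-H_\Gamma S H_\Gamma g\rangle$ for all $f\in L^{p(\cdot)}(\Gamma,\varrho)$ and $g\in L^{p^*(\cdot)}(\Gamma,\varrho^{-1})$. Since the paper says this follows from Theorem~\ref{th:KPS} together with \cite[Lemma~6.6]{Karlovich03}, the essential point is just to explain how the cited lemma applies: Theorem~\ref{th:KPS} guarantees, under hypotheses \eqref{eq:Dini-Lipschitz} and \eqref{eq:Khvedelidze}, that $S$ is bounded on both $L^{p(\cdot)}(\Gamma,\varrho)$ and on the dual space $L^{p^*(\cdot)}(\Gamma,\varrho^{-1})$ (note $1/p^*(t_k)+(-\lambda_k)=1-1/p(t_k)-\lambda_k\in(0,1)$ precisely when \eqref{eq:Khvedelidze} holds), so the expression $-H_\Gamma S H_\Gamma$ makes sense as a bounded operator there.

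First I would recall the classical transposition identity for the Cauchy singular integral on $L^2$-type densities: for nice $f,g$ one has $\int_\Gamma (Sf)(t)\,\overline{g(t)}\,|dt| = \int_\Gamma \bigl(\tfrac1{\pi i}\int_\Gamma \tfrac{f(\tau)}{\tau-t}d\tau\bigr)\overline{g(t)}\,|dt|$, and after writing $d\tau=e^{i\Theta_\Gamma(\tau)}|d\tau|$, $|dt|$-integration, and interchanging the order of integration (justified via the principal-value definition together with the Carleson property and the H\"older inequality of Theorem~\ref{th:Hoelder}), the kernel $\tfrac{1}{\tau-t}$ turns into $-\tfrac{1}{t-\tau}$, producing the operator $-H_\Gamma S H_\Gamma$ acting on $g$. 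I would carry out this computation first on a dense set — say rational functions $\cR$ restricted to $\Gamma$, or more simply on $\C[\tau]+\tfrac1\tau\C[1/\tau]$ — where the principal values cause no trouble, establishing $\langle Sf,g\rangle=\langle f,-H_\Gamma S H_\Gamma g\rangle$ there.

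Next I would pass to the general case by density and continuity: by Theorem~\ref{th:KPS} the operator $S$ is bounded on $L^{p(\cdot)}(\Gamma,\varrho)$, hence $f\mapsto\langle Sf,g\rangle$ is a bounded functional; similarly $-H_\Gamma S H_\Gamma$ is bounded on $L^{p^*(\cdot)}(\Gamma,\varrho^{-1})$ (using that $H_\Gamma$ is a conjugate-linear isometry exchanging the two spaces and $H_\Gamma^2=I$), so $f\mapsto\langle f,-H_\Gamma S H_\Gamma g\rangle$ is also bounded. Since rational functions (or polynomials plus $\cR$) are dense in $L^{p(\cdot)}(\Gamma,\varrho)$ and the two functionals agree on this dense set, they agree everywhere; by Theorem~\ref{th:linear-functional} this identifies $S^*$ with $-H_\Gamma S H_\Gamma$. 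The only delicate part is to justify the interchange of integrals in the principal-value sense — the main obstacle is handling the near-diagonal singularity $|\tau-t|\to 0$ uniformly, which is exactly where the Carleson condition on $\Gamma$ enters; but since the statement is explicitly attributed to \cite[Lemma~6.6]{Karlovich03}, in the paper itself one would simply cite that lemma and record the consequence, as the excerpt does.
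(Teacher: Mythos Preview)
Your proposal is correct and amounts to exactly what the paper does: the paper gives no proof at all, merely recording that the statement follows from Theorem~\ref{th:KPS} (boundedness of $S$ on both $L^{p(\cdot)}(\Gamma,\varrho)$ and its dual) together with \cite[Lemma~6.6]{Karlovich03}, and your sketch is precisely the content of that cited lemma --- compute $\langle Sf,g\rangle$ by Fubini on a dense class, identify the resulting operator as $-H_\Gamma S H_\Gamma$, and extend by continuity. One small slip: $H_\Gamma$ does not ``exchange the two spaces'' but is a conjugate-linear isometric involution on each of $L^{p(\cdot)}(\Gamma,\varrho)$ and $L^{p^*(\cdot)}(\Gamma,\varrho^{-1})$ separately (since $|H_\Gamma\varphi|=|\varphi|$); this does not affect your argument.
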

\begin{lemma}\label{le:duality}
Let $\Gamma$ be a Carleson Jordan curve, let $p:\Gamma\to(1,\infty)$ be a
continuous function satisfying {\rm(\ref{eq:Dini-Lipschitz})}, and let $\varrho$
be a Khvedelidze weight satisfying {\rm(\ref{eq:Khvedelidze})}. Suppose
$a\in L^\infty(\Gamma)$ and $a^{-1}\in L^\infty(\Gamma)$.
\begin{enumerate}
\item[{\rm (a)}]
The operator $aP+Q$ is $n$-normal on $L^{p(\cdot)}(\Gamma,\varrho)$ if and only
if the operator $a^{-1}P+Q$ is $d$-normal on $L^{p^*(\cdot)}(\Gamma,\varrho^{-1})$.
In this case
\begin{equation}\label{eq:duality-1}
n\big(aP+Q;L^{p(\cdot)}(\Gamma,\varrho)\big)
=
d\big(a^{-1}P+Q;L^{p^*(\cdot)}(\Gamma,\varrho^{-1})\big).
\end{equation}

\item[{\rm (b)}]
The operator $aP+Q$ is $d$-normal on $L^{p(\cdot)}(\Gamma,\varrho)$ if and only
if the operator $a^{-1}P+Q$ is $n$-normal on $L^{p^*(\cdot)}(\Gamma,\varrho^{-1})$.
In this case
\[
d\big(aP+Q;L^{p(\cdot)}(\Gamma,\varrho)\big)
=
n\big(a^{-1}P+Q;L^{p^*(\cdot)}(\Gamma,\varrho^{-1})\big).
\]
\end{enumerate}
\end{lemma}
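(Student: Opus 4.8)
The plan is to deduce both statements from the general duality principle relating $n$-normality of an operator to $d$-normality of its adjoint, combined with the explicit description of the adjoint of $S$ given in Lemma~\ref{le:S-adjoint}. Recall the standard fact (see, e.g., \cite{GK92}): for a bounded operator $T$ on a Banach space $X$, the operator $T$ is $n$-normal on $X$ if and only if its adjoint $T^*$ is $d$-normal on $X^*$, and in that case $n(T;X)=d(T^*;X^*)$; symmetrically, $T$ is $d$-normal on $X$ if and only if $T^*$ is $n$-normal on $X^*$, with $d(T;X)=n(T^*;X^*)$. By Theorem~\ref{th:linear-functional}, the dual of $L^{p(\cdot)}(\Gamma,\varrho)$ may be identified (up to equivalence of norms) with $L^{p^*(\cdot)}(\Gamma,\varrho^{-1})$, and under this identification the relevant adjoints are computed from Lemma~\ref{le:S-adjoint}.

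So the first step is to compute the adjoint of $aP+Q$ on $L^{p(\cdot)}(\Gamma,\varrho)$. Since $P=(I+S)/2$ and $Q=(I-S)/2$, Lemma~\ref{le:S-adjoint} gives $S^*=-H_\Gamma S H_\Gamma$, hence $P^*=H_\Gamma Q H_\Gamma$ and $Q^*=H_\Gamma P H_\Gamma$ (using $H_\Gamma^2=I$). The adjoint of the multiplication operator $aI$ is the multiplication operator $\overline{a}\,I$ under the pairing $(f,g)\mapsto\int_\Gamma f\overline{g}\,|d\tau|$; but note that the conjugation in this pairing interacts with the conjugate-linearity of $H_\Gamma$, so one must track conjugates carefully. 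Writing things out, $(aP+Q)^*=P^*\,\overline{a}\,I+Q^* = H_\Gamma Q H_\Gamma\,\overline{a}\,I+H_\Gamma P H_\Gamma$. Conjugating by $H_\Gamma$ on both sides and using that $H_\Gamma\overline{a}H_\Gamma$ acts as multiplication by $a$ (since $H_\Gamma$ carries $\overline{\varphi}$ back to $\varphi$ up to the unimodular factor $e^{-i\Theta_\Gamma}$, which commutes with multiplication by $a$), one finds that $H_\Gamma(aP+Q)^*H_\Gamma$ equals $QaI+P$, or after a further elementary manipulation, an operator of the form $c\,(a^{-1}P+Q)$ for some invertible $c\in L^\infty(\Gamma)$, or perhaps more cleanly $(a^{-1}P+Q)$ multiplied on left and right by invertible operators. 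The point is that $(aP+Q)^*$ is, up to multiplication by boundedly invertible operators (which preserve $n$-normality and $d$-normality and the defect numbers), similar to $a^{-1}P+Q$ on $L^{p^*(\cdot)}(\Gamma,\varrho^{-1})$.

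Granting this identification, part~(a) follows immediately: $aP+Q$ is $n$-normal on $L^{p(\cdot)}(\Gamma,\varrho)$ iff $(aP+Q)^*$ is $d$-normal on the dual, iff (by the similarity just established, and Theorem~\ref{th:linear-functional}) $a^{-1}P+Q$ is $d$-normal on $L^{p^*(\cdot)}(\Gamma,\varrho^{-1})$; and the defect numbers match via $n(aP+Q;L^{p(\cdot)}(\Gamma,\varrho))=d((aP+Q)^*)=d(a^{-1}P+Q;L^{p^*(\cdot)}(\Gamma,\varrho^{-1}))$, giving \eqref{eq:duality-1}. Part~(b) is the mirror image, interchanging the roles of $n$-normal and $d$-normal and using $d(aP+Q)=n((aP+Q)^*)$.

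The main obstacle I anticipate is purely bookkeeping: correctly handling the conjugate-linearity of $H_\Gamma$ together with the complex-conjugate in the duality pairing, so as to verify that the adjoint of $aI$ really produces $a^{-1}$ (and not $\overline{a}^{-1}$ or $\overline{a}$) after conjugation by $H_\Gamma$. One clean way to organize this is to introduce the conjugation operator $C\varphi=\overline{\varphi}$, write $H_\Gamma=e^{-i\Theta_\Gamma}C$, and use the identities $CaC=\overline{a}I$ and $CSC=\overline{S}$ (complex conjugate of the kernel), together with the fact that the pairing $\langle f,g\rangle=\int f\overline g$ satisfies $\langle Tf,g\rangle=\langle f,T^*g\rangle$ with $T^*$ the Banach-space adjoint; one then checks that everything is consistent and the factor $a$ reappears with the correct exponent $-1$. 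Once the algebra of adjoints is pinned down, the rest is an application of the cited general theory of semi-Fredholm operators.
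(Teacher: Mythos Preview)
Your approach is correct and essentially identical to the paper's: compute $(aP+Q)^*=H_\Gamma(P+QaI)H_\Gamma$ via Lemma~\ref{le:S-adjoint} together with $(aI)^*=H_\Gamma aH_\Gamma$ (equivalently $\overline{a}\,I$, as you note), then relate $P+QaI$ to $a^{-1}P+Q$ by invertible factors. The explicit factorization the paper writes down is $P+QaI=(I+Pa^{-1}Q)(a^{-1}P+Q)(I-Qa^{-1}P)\,aI$, checked directly using $P^2=P$, $Q^2=Q$, $PQ=QP=0$ from Lemma~\ref{le:basic}; your first guess of a single scalar multiplier $c\in L^\infty(\Gamma)$ does not suffice, but your second alternative---left and right multiplication by invertible operators---is exactly what is needed.
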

\begin{proof}
By Theorem~\ref{th:linear-functional}, the space $L^{p^*(\cdot)}(\Gamma,\varrho^{-1})$
may be identified with the dual space $[L^{p(\cdot)}(\Gamma,\varrho)]^*$.
Let us prove part (a). The operator $aP+Q$ is $n$-normal on $L^{p(\cdot)}(\Gamma,\varrho)$
if and only if its adjoint $(aP+Q)^*$ is $d$-normal on the dual space
$L^{p^*(\cdot)}(\Gamma,\varrho^{-1})$ and
\begin{equation}\label{eq:duality-2}
n\big(aP+Q;L^{p(\cdot)}(\Gamma,\varrho)\big)
=
d\big((aP+Q)^*;L^{p^*(\cdot)}(\Gamma,\varrho^{-1})\big).
\end{equation}
 From Theorem~\ref{th:linear-functional} it follows that
\begin{equation}\label{eq:duality-4}
(aI)^*=H_\Gamma aH_\Gamma.
\end{equation}
Combining Lemma~\ref{le:S-adjoint} and (\ref{eq:duality-4}), we get
\begin{equation}\label{eq:duality-5}
(aP+Q)^*=H_\Gamma(P+QaI)H_\Gamma.
\end{equation}
On the other hand, taking into account Lemma~\ref{le:basic},
it is easy to check that
\begin{equation}\label{eq:duality-6}
P+QaI=(I+Pa^{-1}Q)(a^{-1}P+Q)(I-Qa^{-1}P)aI,
\end{equation}
where $I+Pa^{-1}Q$, $I-Qa^{-1}P$, and $aI$ are invertible operators
on $L^{p^*(\cdot)}(\Gamma,\varrho^{-1})$. From (\ref{eq:duality-5}) and
(\ref{eq:duality-6}) it follows that $(aP+Q)^*$ and $a^{-1}P+Q$ are
$d$-normal on the space $L^{p^*(\cdot)}(\Gamma,\varrho^{-1})$ only
simultaneously and
\begin{equation}\label{eq:duality-7}
d\big((aP+Q)^*;L^{p^*(\cdot)}(\Gamma,\varrho^{-1})\big)=
d\big(a^{-1}P+Q;L^{p^*(\cdot)}(\Gamma,\varrho^{-1})\big).
\end{equation}
Combining (\ref{eq:duality-2}) and (\ref{eq:duality-7}), we arrive at
(\ref{eq:duality-1}). Part (a) is proved.
The proof of part (b) is analogous.
\end{proof}
Denote by $L_{N\times N}^\infty(\Gamma)$ the algebra of all $N\times N$ matrix
functions with entries in the space $L^\infty(\Gamma)$.
\begin{lemma}\label{le:duality-matrix}
Let $\Gamma$ be a Carleson Jordan curve, let $p:\Gamma\to(1,\infty)$ be a
continuous function satisfying {\rm (\ref{eq:Dini-Lipschitz})}, and let
$\varrho$ be a Khvedelidze weight satisfying {\rm (\ref{eq:Khvedelidze})}.
Suppose $a\in L_{N\times N}^\infty(\Gamma)$ and $a^T$ is the transposed matrix
of $a$. Then the operator $P+aQ$ is $n$-normal (resp. $d$-normal)
on $L_N^{p(\cdot)}(\Gamma,\varrho)$ if and only if the operator $a^TP+Q$ is
$d$-normal (resp. $n$-normal) on $L_N^{p^*(\cdot)}(\Gamma,\varrho^{-1})$.
\end{lemma}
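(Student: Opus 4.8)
The plan is to reduce the matrix statement of Lemma~\ref{le:duality-matrix} to the scalar duality result Lemma~\ref{le:duality} together with elementary algebraic manipulations. First I would observe that the adjoint of an operator on $L_N^{p(\cdot)}(\Gamma,\varrho)$ acts on $L_N^{p^*(\cdot)}(\Gamma,\varrho^{-1})$ (by Theorem~\ref{th:linear-functional} applied componentwise), and that, just as in the scalar case, $S^*=-H_\Gamma S H_\Gamma$ acting entrywise on the direct sum and $(aI)^*=H_\Gamma a^T H_\Gamma$, where the transpose appears because for matrix multiplication operators the pairing $\int_\Gamma (af)\cdot\overline{g}\,|d\tau|=\int_\Gamma f\cdot\overline{a^Tg}\,|d\tau|$ (modulo the antilinear twist carried by $H_\Gamma$) introduces $a^T$. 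Thus I would compute
\[
(P+aQ)^*=H_\Gamma\big(P+Qa^TI\big)H_\Gamma
\]
on $L_N^{p^*(\cdot)}(\Gamma,\varrho^{-1})$, exactly paralleling \eqref{eq:duality-5}.

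Next, the operator $P+aQ$ is $n$-normal (resp.\ $d$-normal) on $L_N^{p(\cdot)}(\Gamma,\varrho)$ if and only if its adjoint $(P+aQ)^*$ is $d$-normal (resp.\ $n$-normal) on the dual space; this is the standard Banach space duality for semi-Fredholm properties (as already used in the proof of Lemma~\ref{le:duality}). Since $H_\Gamma$ is an additive bijection with $H_\Gamma^2=I$ that is a conjugate-linear isometry, conjugation by $H_\Gamma$ preserves closedness of the image and the dimensions of kernel and cokernel, so $(P+aQ)^*$ and $P+Qa^TI$ share the same $n$-normality/$d$-normality status. It therefore suffices to show that $P+Qa^TI$ and $a^TP+Q$ are simultaneously $d$-normal (resp.\ $n$-normal) on $L_N^{p^*(\cdot)}(\Gamma,\varrho^{-1})$.

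For the last reduction I would like to invoke an identity analogous to \eqref{eq:duality-6}, namely
\[
P+Qa^TI=(I+Pb Q)(a^TP+Q)(I-Q b P)\,a^TI
\]
with $b$ a suitable inverse of $a^T$; however, here $a$ (hence $a^T$) is only assumed to lie in $L_{N\times N}^\infty(\Gamma)$, not to be invertible, so this factorization is not available in general. This is the main obstacle, and I expect it to be resolved exactly as in the scalar case by first treating the invertible case and then passing to the general case through a perturbation/compactness argument — or, more likely in the spirit of this paper, by noting that $n$-normality (resp.\ $d$-normality) of $P+aQ$ already forces $a$ to be invertible in $L_{N\times N}^\infty(\Gamma)$ (this is precisely the content announced for Section~\ref{sec:necessity}), so that without loss of generality one may assume $a^{-1}\in L_{N\times N}^\infty(\Gamma)$ and then apply the invertible-case factorization. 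With invertibility in hand, $I+Pa^{-T}Q$, $I-Qa^{-T}P$, and $a^TI$ are invertible on $L_N^{p^*(\cdot)}(\Gamma,\varrho^{-1})$, so $P+Qa^TI$ and $a^TP+Q$ differ by composition with invertible operators and hence have the same $d$-normality (resp.\ $n$-normality) status; chaining the equivalences back through the adjoint and $H_\Gamma$ gives the claim. (Strictly, to keep the lemma self-contained one may instead argue that if $a$ is not invertible then neither $P+aQ$ nor $a^TP+Q$ has the relevant property, so the biconditional holds trivially, while if $a$ is invertible the above factorization applies; I would phrase the proof so as not to create a circular dependence on Section~\ref{sec:necessity}.)
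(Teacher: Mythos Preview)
Your overall strategy matches the paper's, but there is a concrete error in the adjoint computation that cascades into an unnecessary obstacle. Since $P^*=H_\Gamma QH_\Gamma$, $Q^*=H_\Gamma PH_\Gamma$, and $(aI)^*=H_\Gamma a^TH_\Gamma$, one gets
\[
(P+aQ)^*=P^*+Q^*(aI)^*=H_\Gamma QH_\Gamma+H_\Gamma PH_\Gamma\cdot H_\Gamma a^TH_\Gamma
=H_\Gamma\big(Pa^TI+Q\big)H_\Gamma,
\]
not $H_\Gamma(P+Qa^TI)H_\Gamma$ as you wrote. With the correct expression $Pa^TI+Q$ one needs to compare this to $a^TP+Q$, and here the paper uses the identity
\[
Pa^TI+Q=(I+Pa^TQ)(a^TP+Q)(I-Qa^TP),
\]
which is valid for \emph{every} $a\in L_{N\times N}^\infty(\Gamma)$: the outer factors are invertible unconditionally because $(Pa^TQ)^2=Pa^T(QP)a^TQ=0$ and $(Qa^TP)^2=0$ (using $PQ=QP=0$ from Lemma~\ref{le:basic}), so $I\pm Pa^TQ$ and $I\pm Qa^TP$ are inverses of one another. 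No invertibility of $a$ is required, and the whole discussion about assuming $a^{-1}\in L_{N\times N}^\infty(\Gamma)$ disappears.

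This matters beyond tidiness: your proposed fallback---invoking the results of Section~\ref{sec:necessity} to force $a$ invertible, or declaring the biconditional trivially true when $a$ is singular---is circular, since Theorem~\ref{th:semi-Fredholmness-necessity} explicitly relies on Lemma~\ref{le:duality-matrix} in its proof. So the argument as written has a genuine gap; correcting the adjoint and using the nilpotency-based factorization above closes it cleanly.
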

\begin{proof}
In view of Theorem~\ref{th:linear-functional}, the space
$L_N^{p^*(\cdot)}(\Gamma,\varrho^{-1})$ may be identified with the dual space
$[L_N^{p(\cdot)}(\Gamma,\varrho)]^*$, and the general form of a linear functional
on $L_N^{p(\cdot)}(\Gamma,\varrho)$ is given by
\[
G(f)=\sum_{j=1}^N\int_\Gamma f_j(\tau)\overline{g_j(\tau)}\,|d\tau|,
\]
where $f=(f_1,\dots,f_N)\in L_N^{p(\cdot)}(\Gamma,\varrho)$ and
$g=(g_1,\dots,g_N)\in L_N^{p^*(\cdot)}(\Gamma,\varrho^{-1})$, and the norms
in $[L_N^{p(\cdot)}(\Gamma,\varrho)]^*$ and in $L_N^{p^*(\cdot)}(\Gamma,\varrho^{-1})$
are equivalent. It is easy to see that $(aI)^*=H_\Gamma a^TH_\Gamma$, where
$H_\Gamma$ is defined on $L_N^{p^*(\cdot)}(\Gamma,\varrho^{-1})$ elementwise.
From Lemma~\ref{le:S-adjoint} it follows that $P^*=H_\Gamma QH_\Gamma$ and
$Q^*=H_\Gamma PH_\Gamma$ on $L_N^{p^*(\cdot)}(\Gamma,\varrho^{-1})$. Then
\begin{equation}\label{eq:duality-matrix-1}
(P+aQ)^*=H_\Gamma(Pa^TI+Q)H_\Gamma.
\end{equation}
On the other hand, it is easy to see that
\begin{equation}\label{eq:duality-matrix-2}
Pa^TI+Q=(I+Pa^TQ)(a^TP+Q)(I-Qa^TP),
\end{equation}
where the operators $I+Pa^TQ$ and $I-Qa^TP$ are invertible on
$L_N^{p^*(\cdot)}(\Gamma,\varrho^{-1})$. From (\ref{eq:duality-matrix-1})
and (\ref{eq:duality-matrix-2}) it follows that $(P+aQ)^*$ and $a^TP+Q$
are $n$-normal (resp. $d$-normal) on $L_N^{p^*(\cdot)}(\Gamma,\varrho^{-1})$
only simultaneously. This implies the desired statement.
\end{proof}
\section{Closedness of the image of $aP+Q$ in the scalar case}
\label{sec:closed-range}
\subsection{Functions in $L^{p(\cdot)}(\Gamma,\varrho)$
are better than integrable if $S$ is bounded}
\begin{lemma}\label{le:embedding}
Suppose $\Gamma$ is a Carleson Jordan curve and $p:\Gamma\to(1,\infty)$ is a
continuous function satisfying {\rm (\ref{eq:Dini-Lipschitz})}. If $\varrho$
is a Khvedelidze weight satisfying {\rm (\ref{eq:Khvedelidze})}, then there
exists an $\eps>0$ such that $L^{p(\cdot)}(\Gamma,\varrho)$ is continuously
embedded in $L^{1+\eps}(\Gamma)$.
\end{lemma}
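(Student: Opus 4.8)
The plan is to show that the Khvedelidze weight $\varrho$ belongs to the variable-exponent Muckenhoupt-type class associated with some constant exponent $1+\eps$ strictly larger than $1$, and then invoke the boundedness of $S$ at that constant exponent to deduce the embedding. More precisely, I would argue as follows. By Theorem~\ref{th:KPS}, the hypotheses guarantee that $S$ is bounded on $L^{p(\cdot)}(\Gamma,\varrho)$, and in particular the Khvedelidze exponents satisfy the strict inequalities $0<1/p(t_k)+\lambda_k<1$ for every $k$. Since the map $k\mapsto(1/p(t_k)+\lambda_k)$ takes finitely many values, all strictly between $0$ and $1$, there is some $q>1$, which we may take of the form $q=1+\eps$, such that $0<1/q+\lambda_k<1$ for all $k$ simultaneously (choose $q$ close enough to $1$ that $1/q$ is close to $1$; note each $\lambda_k<1-1/p(t_k)<1$, and each $\lambda_k>-1/p(t_k)>-1$, so pushing $1/q$ towards $1$ keeps $1/q+\lambda_k<1$, while $1/q+\lambda_k>0$ is automatic once $1/q>\max_k(-\lambda_k)$, which holds for $q$ near $1$ since $-\lambda_k<1$). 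This is precisely the Khvedelidze condition \eqref{eq:Khvedelidze} for the constant exponent $q=1+\eps$, so by Theorem~\ref{th:KPS} the operator $S$ is bounded on the classical weighted Lebesgue space $L^{1+\eps}(\Gamma,\varrho)$.

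Next I would pass from boundedness of $S$ at the constant exponent to the pointwise inclusion $L^{p(\cdot)}(\Gamma,\varrho)\subset L^{1+\eps}(\Gamma)$. The key observation is that $f\in L^{p(\cdot)}(\Gamma,\varrho)$ means $f\varrho\in L^{p(\cdot)}(\Gamma)$ (up to the usual normalization), hence $|f\varrho|^{1+\eps}\varrho^{-(1+\eps)}=|f|^{1+\eps}$ should be integrable once we control $\int_\Gamma |f\varrho|^{1+\eps}\varrho^{-(1+\eps)}\,|d\tau|$; but since $1+\eps<p(\tau)$ everywhere, we have $|f(\tau)\varrho(\tau)|^{1+\eps}\le 1+|f(\tau)\varrho(\tau)|^{p(\tau)}$ pointwise, so $f\varrho$ lies in the ordinary Lebesgue space $L^{1+\eps}(\Gamma)$ with a norm bound. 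The remaining point is that $\varrho^{-(1+\eps)}$ is integrable against $L^{1+\eps/\text{something}}$ functions in the right way; here one uses that the exponents $-(1+\eps)\lambda_k$ appearing in $\varrho^{-(1+\eps)}$ are each strictly greater than $-1$ by our choice of $\eps$, so applying Hölder's inequality in $L^1(\Gamma)$ with a suitable pair of conjugate exponents — or more directly estimating near each $t_k$ — yields $\int_\Gamma|f|^{1+\eps'}\,|d\tau|<\infty$ for some possibly smaller $\eps'>0$, together with a norm inequality $\|f\|_{1+\eps'}\le C\|f\|_{p(\cdot),\varrho}$. One then relabels $\eps'$ as $\eps$.

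The main obstacle, and the place where care is needed, is the interaction between the weight $\varrho$ and the exponent near the distinguished points $t_k$: one must verify that the loss incurred by removing the weight, i.e. integrability of $\varrho^{-(1+\eps)}$ raised to the conjugate exponent of $p(\cdot)/(1+\eps)$, is compensated by the slack in \eqref{eq:Khvedelidze}. Concretely, near $t_k$ the weight behaves like $|t-t_k|^{\lambda_k}$ and the exponent is essentially $p(t_k)$, and the condition $1/p(t_k)+\lambda_k<1$ is exactly what makes $|t-t_k|^{-\lambda_k q'}$ locally integrable for $q'$ the Hölder-conjugate of $p(t_k)/(1+\eps)$ when $\eps$ is small; similarly $1/p(t_k)+\lambda_k>0$ handles the case $\lambda_k<0$. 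Making this rigorous requires the local logarithmic control \eqref{eq:Dini-Lipschitz} on $p$ so that $p(\tau)$ may be replaced by $p(t_k)$ up to harmless factors in a neighbourhood of $t_k$, which is a standard but slightly technical estimate; away from the points $t_k$ the weight $\varrho$ is bounded above and below and there is nothing to do. Once these local estimates are assembled and combined with the trivial pointwise bound $|g|^{1+\eps}\le 1+|g|^{p(\cdot)}$, the continuous embedding follows.
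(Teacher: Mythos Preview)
Your third paragraph contains a workable strategy --- establish $\varrho^{-(1+\eps)}\in L^{[p(\cdot)/(1+\eps)]^*}(\Gamma)$ by direct local estimates near each $t_k$ (freezing $p$ at $p(t_k)$ via the log-H\"older condition \eqref{eq:Dini-Lipschitz} and integrating powers of $|t-t_k|$ using the Carleson property), then apply the variable-exponent H\"older inequality (Theorem~\ref{th:Hoelder}) --- but the first two paragraphs are off-track. Your first paragraph proves that $S$ is bounded on the \emph{constant}-exponent space $L^{1+\eps}(\Gamma,\varrho)$, yet this fact is never invoked afterwards. More seriously, the global constant-exponent route in your second paragraph fails: knowing only that $f\varrho\in L^{1+\eps}(\Gamma)$ and then stripping off the weight by classical H\"older would require $\varrho^{-1}\in L^r(\Gamma)$ with $r>(1+\eps)/\eps$, i.e.\ $\lambda_k<\eps/(1+\eps)$ for every $k$ with $\lambda_k>0$, which is far stronger than \eqref{eq:Khvedelidze}. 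The crucial information that $p(t_k)$ may be much larger than $1+\eps$ is lost the moment you collapse to a global constant exponent; only the localized version you sketch in paragraph~3 survives.

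The paper's proof takes a different and slicker route to the same key fact $\varrho^{-(1+\eps)}\in L^{[p(\cdot)/(1+\eps)]^*}(\Gamma)$, avoiding all direct local computation. It chooses $\eps>0$ so that $0<(1/p(t_k)+\lambda_k)(1+\eps)<1$ for all $k$, whence by Theorem~\ref{th:KPS} the operator $S$ is bounded on $L^{p(\cdot)/(1+\eps)}(\Gamma,\varrho^{1+\eps})$ --- note the \emph{variable} exponent $p(\cdot)/(1+\eps)$, not your constant $1+\eps$. Equivalently $A:=\varrho^{1+\eps}S\varrho^{-1-\eps}I$ is bounded on $L^{p(\cdot)/(1+\eps)}(\Gamma)$; the commutator of $A$ with the multiplication $(Vg)(t)=tg(t)$ is a rank-one operator whose boundedness forces the functional $g\mapsto\int_\Gamma g\,\varrho^{-(1+\eps)}\,d\tau$ to be bounded on $L^{p(\cdot)/(1+\eps)}(\Gamma)$, and duality (Theorem~\ref{th:linear-functional}) then yields $\varrho^{-(1+\eps)}\in L^{[p(\cdot)/(1+\eps)]^*}(\Gamma)$. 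The final H\"older step is identical to what you propose. Thus the paper extracts the needed integrability of $\varrho^{-(1+\eps)}$ purely from Theorem~\ref{th:KPS} and a commutator trick, whereas your direct approach requires as separate input the lemma that $|t-t_k|^\alpha$ is locally integrable on Carleson curves for $\alpha>-1$, together with a careful exponent-freezing argument near each $t_k$.
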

\begin{proof}
If (\ref{eq:Khvedelidze}) holds, then there exists a number $\eps>0$ such that
\[
0<(1/p(t_k)+\lambda_k)(1+\eps)<1
\quad\mbox{for all}\quad k\in\{1,\dots,m\}.
\]
Hence, by Theorem~\ref{th:KPS}, the operator $S$ is bounded on
$L^{p(\cdot)/(1+\eps)}(\Gamma,\varrho^{1+\eps})$. In that case the operator
$\varrho^{1+\eps}S\varrho^{-1-\eps}I$ is bounded on $L^{p(\cdot)/(1+\eps)}(\Gamma)$.
Obviously, the operator $V$ defined by $(Vg)(t)=tg(t)$ is bounded on
$L^{p(\cdot)/(1+\eps)}(\Gamma)$, and
\[
((AV-VA)g)(t)=\frac{\varrho^{1+\eps}(t)}{\pi i}
\int_\Gamma\frac{g(\tau)}{\varrho^{1+\eps}(\tau)}\,d\tau.
\]
Since $AV-VA$ is bounded on $L^{p(\cdot)/(1+\eps)}(\Gamma)$, there exists a
constant $C>0$ such that
\[
\left|\int_\Gamma\frac{g(\tau)}{\varrho^{1+\eps}(\tau)}\,d\tau\right|
\|\varrho^{1+\eps}\|_{p(\cdot)/(1+\eps)}
=
\left\|
\varrho^{1+\eps}\int_\Gamma\frac{g(\tau)}{\varrho^{1+\eps}(\tau)}\,d\tau
\right\|_{p(\cdot)/(1+\eps)}
\le
C\|g\|_{p(\cdot)/(1+\eps)}
\]
for all $g\in L^{p(\cdot)/(1+\eps)}(\Gamma)$. Since $\varrho(\tau)>0$
a.e. on $\Gamma$, we have $\|\varrho^{1+\eps}\|_{p(\cdot)/(1+\eps)}>0$.
Hence
\[
\Lambda(g)=\int_\Gamma\frac{g(\tau)}{\varrho^{1+\eps}(\tau)}
e^{i\Theta_\Gamma(\tau)}\,|d\tau|
\]
is a bounded linear functional on $L^{p(\cdot)/(1+\eps)}(\Gamma)$. From
Theorem~\ref{th:linear-functional} it follows that
$\varrho^{-1-\eps}\in L^{[p(\cdot)/(1+\eps)]^*}(\Gamma)$, where
\[
\left(\frac{p(t)}{1+\eps}\right)^*=\frac{p(t)}{p(t)-(1+\eps)}
\]
is the conjugate exponent for $p(\cdot)/(1+\eps)$. By Theorem~\ref{th:Hoelder},
\begin{equation}\label{eq:embedding-1}
\int_\Gamma|f(\tau)|^{1+\eps}|d\tau|
\le
C_{p(\cdot),\eps}
\left\|\,|f|^{1+\eps}\varrho^{1+\eps}\right\|_{p(\cdot)/(1+\eps)}
\|\varrho^{-1-\eps}\|_{[p(\cdot)/(1+\eps)]^*}.
\end{equation}
It is easy to see that
\begin{equation}\label{eq:embedding-2}
\left\|\,|f|^{1+\eps}\varrho^{1+\eps}\right\|_{p(\cdot)/(1+\eps)}
=
\|f\varrho\|_{p(\cdot)}^{1+\eps}
=
\|f\|_{p(\cdot),\varrho}^{1+\eps}.
\end{equation}
From (\ref{eq:embedding-1}) and (\ref{eq:embedding-2}) it follows that
$\|f\|_{1+\eps}\le C_{p(\cdot),\eps,\varrho}\|f\|_{p(\cdot),\varrho}$
for all $f\in L^{p(\cdot)}(\Gamma,\varrho)$, where
$C_{p(\cdot),\eps,\varrho}:=
(C_{p(\cdot),\eps}\|\varrho^{-1-\eps}\|_{[p(\cdot)/(1+\eps)]^*})^{1/(1+\eps)}<\infty$.
\end{proof}
\subsection{Criterion for Fredholmness of $aP+Q$ in the scalar case}
\begin{theorem}[see \cite{Karlovich05}, Theorem~3.3]
\label{th:criterion-Fredholmness-scalar}
Let $\Gamma$ be a Carleson Jordan curve satisfying the logarithmic whirl
condition {\rm(\ref{eq:spiralic})} at each point $t\in\Gamma$, let
$p:\Gamma\to(1,\infty)$ be a continuous function satisfying
{\rm(\ref{eq:Dini-Lipschitz})}, and let $\varrho$ be a Khvedelidze weight
satisfying {\rm(\ref{eq:Khvedelidze})}. Suppose $a\in PC(\Gamma)$. The
operator $aP+Q$ is Fredholm on $L^{p(\cdot)}(\Gamma,\varrho)$ if and only
if $a(t\pm 0)\ne 0$ and
\begin{equation}\label{eq:Fredholm}
-\frac{1}{2\pi}\arg\frac{a(t-0)}{a(t+0)}
+
\frac{\delta(t)}{2\pi}\log\left|\frac{a(t-0)}{a(t+0)}\right|
+
\frac{1}{p(t)}+\lambda(t)\notin\Z
\end{equation}
for all $t\in\Gamma$, where
\[
\lambda(t):=\left\{
\begin{array}{lcl}
\lambda_k, &\mbox{if} & t=t_k, \quad k\in\{1,\dots,m\},\\
0,         &\mbox{if} & t\notin\Gamma\setminus\{t_1,\dots,t_m\}.
\end{array}
\right.
\]
\end{theorem}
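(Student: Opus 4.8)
The plan is to prove the criterion by localization, following \cite{Karlovich05}: the Allan--Douglas local principle reduces the Fredholmness of $aP+Q$ to local invertibility at each point of $\Gamma$, and the local algebras are analysed by means of the two projections theorem \cite{FRS93} (equivalently, by an explicit Mellin convolution model). First I would fix the algebra $\mathcal{A}$, the smallest closed subalgebra of $\cB(L^{p(\cdot)}(\Gamma,\varrho))$ containing $S$ and all $cI$ with $c\in PC(\Gamma)$, together with the ideal $\mathcal{K}$ of compact operators. The preliminary facts needed are that $\mathcal{K}\subset\mathcal{A}$ and that $cS-Sc\in\mathcal{K}$ for every $c\in C(\Gamma)$; combined with $P^{2}=P$, $Q^{2}=Q$ from Lemma~\ref{le:basic} (so $P+Q=I$ and $PQ=QP=0$), this shows that the coset of $C(\Gamma)$ lies in the centre of $\mathcal{A}/\mathcal{K}$ and, by Gelfand theory, is isomorphic to $C(\Gamma)$. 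Hence the Allan--Douglas principle applies: $aP+Q$ is Fredholm on $L^{p(\cdot)}(\Gamma,\varrho)$ if and only if its local coset $\pi_{t}(aP+Q)$ is invertible in the local algebra $\mathcal{A}_{t}$ for every $t\in\Gamma$.

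Next I would dispose of the points of continuity. If $a$ is continuous at $t$ with $a(t)\neq 0$, then $a\equiv a(t)$ modulo the local ideal at $t$, so $\pi_{t}(aP+Q)=\pi_{t}(a(t)P+Q)$; since $(a(t)P+Q)(a(t)^{-1}P+Q)=P+Q=I$, this is invertible, and at such $t$ condition (\ref{eq:Fredholm}) holds automatically because its first two summands vanish and $1/p(t)+\lambda(t)\in(0,1)$ by (\ref{eq:Khvedelidze}). If $a$ is continuous at $t$ with $a(t)=0$, the local operator reduces to the non-invertible idempotent $Q$, and more generally if $a$ has a one-sided value equal to $0$ at $t$ the local coset is not invertible; in all these cases $aP+Q$ is not Fredholm, which matches the requirement $a(t\pm 0)\neq 0$ in the statement.

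The third and main step is the analysis at a jump point $t$ with $a(t\pm 0)\neq 0$. Using the factorisation $aP+Q=(a(t+0)P+Q)\big((a(t+0)^{-1}a)P+Q\big)$ with the first factor invertible, and a further continuous invertible factor inside $\mathcal{A}_{t}$, one reduces $\pi_{t}(aP+Q)$ to the local coset of a canonical model operator $g_{t}P+Q$, where $g_{t}\in PC(\Gamma)$ has at $t$ the prescribed jump ratio $g_{t}(t-0)/g_{t}(t+0)=a(t-0)/a(t+0)$ and is continuous and nonvanishing elsewhere (a suitable branch of a power of $(\tau-t)$). The local algebra $\mathcal{A}_{t}$ is generated by the two idempotents $p:=\pi_{t}(P)$ and $q:=\pi_{t}(\chi I)$, with $\chi$ the characteristic function of an arc issuing from $t$; the two projections theorem \cite{FRS93} identifies $\mathcal{A}_{t}$ with an algebra of $2\times2$ matrix functions over the ``local spectrum'' $\sigma_{t}$ of $pqp+(e-p)(e-q)(e-p)$, and invertibility of $\pi_{t}(g_{t}P+Q)$ becomes the condition that $0$ does not lie on a concrete arc joining $a(t-0)$ to $a(t+0)$ in $\C$ cut out by $\sigma_{t}$. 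The heart of the matter is the explicit computation of $\sigma_{t}$: the logarithmic whirl condition (\ref{eq:spiralic}) lets one pass near $t$ to an exact Mellin convolution model, the Dini--Lipschitz condition (\ref{eq:Dini-Lipschitz}) makes $p(\cdot)$ locally interchangeable with the constant $p(t)$, and the boundedness range of Theorem~\ref{th:KPS} forces the relevant indicator of $L^{p(\cdot)}(\Gamma,\varrho)$ at $t$ to collapse to the single number $1/p(t)+\lambda(t)\in(0,1)$. Consequently the arc cut out by $\sigma_{t}$ is a genuine one-dimensional curve (a circular arc when $\delta(t)=0$), and $0$ lies off it precisely when the quantity on the left of (\ref{eq:Fredholm}) is not an integer.

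Assembling the three steps gives the theorem: $aP+Q$ is Fredholm iff it is locally invertible at every $t\in\Gamma$, iff $a(t\pm 0)\neq 0$ and (\ref{eq:Fredholm}) holds for all $t\in\Gamma$. I expect the computation of $\sigma_{t}$ in the third step to be the main obstacle, and within it the key point is to prove that the indicator of the space at a jump point degenerates to the single value $1/p(t)+\lambda(t)$ — this is what turns the a priori two-dimensional ``leaf'' between $a(t-0)$ and $a(t+0)$ into an arc and produces the clean non-integrality criterion. The quantitative ingredients (submultiplicativity of Luxemburg--Nakano norms, local comparison of the variable exponent with its value at $t$, and norm estimates for $S$ on the perturbed spaces $L^{p(\cdot)/(1+\eps)}(\Gamma,\varrho^{1+\eps})$ in the spirit of Lemma~\ref{le:embedding}) are all concentrated there.
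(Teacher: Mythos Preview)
Your outline is correct and is essentially the localization route via the Allan--Douglas principle and the two projections theorem that \cite{Karlovich05} uses for the full algebra result. However, the paper itself does not reprove Theorem~\ref{th:criterion-Fredholmness-scalar}; it quotes it from \cite[Theorem~3.3]{Karlovich05} and, in the paragraph following the statement, indicates a different decomposition of the argument: the necessity portion is taken from \cite[Theorem~8.1]{Karlovich03} (a general necessary condition on Banach function spaces, obtained via indicator functions of the space and weight rather than via the two projections machinery), while the sufficiency portion is deduced directly from \cite[Lemma~7.1]{Karlovich03} together with Theorem~\ref{th:KPS}, i.e.\ by an explicit regularization using the boundedness of the weighted operators $wSw^{-1}I$ with $w(\tau)=|(t-\tau)^\gamma|$, $\gamma\in\C$---this is precisely where the logarithmic whirl condition~(\ref{eq:spiralic}) enters, since it makes such $w$ equivalent to a Khvedelidze weight.

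What each approach buys: your localization proof is uniform (one machine handles both directions and generalises immediately to the matrix and algebra settings), but requires you to establish that $\mathcal{K}\subset\mathcal{A}$ and to compute the local spectrum $\sigma_t$ from scratch, which is where all the analytic work (degeneration of the indicator set to the single value $1/p(t)+\lambda(t)$) is hidden. The paper's cited route is more modular: necessity comes from a soft general principle valid on arbitrary Banach function spaces, and sufficiency is constructive---one writes down a concrete regularizer for $aP+Q$ using factorizations of $a$ by functions of the form $(t-\tau)^\gamma$ and checks boundedness via Theorem~\ref{th:KPS}. The constructive sufficiency proof has the advantage that it makes transparent why the restriction~(\ref{eq:spiralic}) is needed (and why a sharper boundedness theorem than Theorem~\ref{th:KPS} would be required to drop it), whereas in your approach this restriction is buried inside the spectral computation.
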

The necessity portion of this result was obtained in \cite[Theorem~8.1]{Karlovich03}
for spaces with variable exponents satisfying \eqref{eq:Dini-Lipschitz}
under the assumption that $S$ is bounded on $L^{p(\cdot)}(\Gamma,w)$,
where $\Gamma$ is an arbitrary rectifiable Jordan curve and $w$ is an arbitrary
weight (not necessarily power). The sufficiency portion follows from
\cite[Lemma~7.1]{Karlovich03} and Theorem~\ref{th:KPS} (see \cite{Karlovich05}
for details). The restriction \eqref{eq:spiralic} comes up in the proof
of the sufficiency portion because under this condition one can guarantee
the boundedness of the weighted operator $wSw^{-1}I$, where
$w(\tau)=|(t-\tau)^\gamma|$ and $\gamma\in\C$. If $\Gamma$ does not satisfy
\eqref{eq:spiralic}, then the weight $w$ is not equivalent to a Khvedelidze
weight and Theorem~\ref{th:KPS} is not applicable to the operator $wSw^{-1}I$,
that is, a more general result than Theorem~\ref{th:KPS} is needed to treat
the case of arbitrary Carleson curves. As far as we know, such a result is not
known in the case of variable exponents. For a constant exponent $p$, the result of
Theorem~\ref{th:criterion-Fredholmness-scalar} (for arbitrary Muckenhoupt
weights) is proved in \cite{BK95} (see also \cite[Proposition~7.3]{BK97}
for the case of arbitrary Muckenhoupt weights and arbitrary Carleson curves).
\subsection{Criterion for the closedness of the image of $aP+Q$}
\begin{theorem}\label{th:criterion-closedness}
Let $\Gamma$ be a Carleson Jordan curve satisfying the logarithmic whirl
condition {\rm(\ref{eq:spiralic})} at each point $t\in\Gamma$, let
$p:\Gamma\to(1,\infty)$ be a continuous function satisfying
{\rm(\ref{eq:Dini-Lipschitz})}, and let $\varrho$ be a Khvedelidze weight
satisfying {\rm(\ref{eq:Khvedelidze})}. Suppose $a\in PC(\Gamma)$ has
finitely many jumps and $a(t\pm 0)\ne 0$ for all $t\in\Gamma$. Then
the image of $aP+Q$ is closed in $L^{p(\cdot)}(\Gamma,\varrho)$ if and only
if {\rm(\ref{eq:Fredholm})} holds for all $t\in\Gamma$.
\end{theorem}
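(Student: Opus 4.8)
The plan is to derive the ``if'' part at once from the Fredholm criterion, and to obtain the ``only if'' part by combining a Smirnov--class description of the kernel of $aP+Q$ with a perturbation argument based on Lemma~\ref{le:stability}. If \eqref{eq:Fredholm} holds at every $t\in\Gamma$, then $aP+Q$ is Fredholm on $L^{p(\cdot)}(\Gamma,\varrho)$ by Theorem~\ref{th:criterion-Fredholmness-scalar} (recall that $a(t\pm0)\ne0$ is assumed), so its image is closed. For the converse we first observe that wherever $a$ is continuous \eqref{eq:Fredholm} reduces to $1/p(t)+\lambda(t)\notin\Z$, which holds because $1<\underline{p}\le\overline{p}<\infty$ and because of \eqref{eq:Khvedelidze}; hence \eqref{eq:Fredholm} can fail at most at the finitely many jumps of $a$. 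I will show that closedness of $\im(aP+Q)$ forces $aP+Q$ to be Fredholm, which by Theorem~\ref{th:criterion-Fredholmness-scalar} yields \eqref{eq:Fredholm} everywhere. This rests on two facts: (i) $\dim\Ker(aP+Q)<\infty$ for every $a\in PC(\Gamma)$ with finitely many jumps and $a(t\pm0)\ne0$, so that a closed image makes $aP+Q$ an $n$-normal, hence semi-Fredholm, operator; and (ii) a semi-Fredholm operator of the form $aP+Q$ is necessarily Fredholm.

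For (i), take $f\in\Ker(aP+Q)$ and set $f_\pm:=Pf,\ Qf$; by Lemma~\ref{le:basic} and the definition of $P,Q$ we have $f=f_++f_-$ and $af_++f_-=0$ on $\Gamma$. Lemma~\ref{le:embedding} gives $f_\pm\in L^{1+\eps}(\Gamma)$, so by Lemma~\ref{le:Hardy-Smirnov} (together with Smirnov's theorem) $f_+\in E^{1+\eps}(D_+)$ and $f_-\in E_0^{1+\eps}(D_-)$. If $Pf=0$ for every $f\in\Ker(aP+Q)$ then $\Ker(aP+Q)=\{0\}$; otherwise fix $f^{(1)}$ with $f_+^{(1)}:=Pf^{(1)}\not\equiv0$. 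By Theorem~\ref{th:Lusin-Privalov}, $f_+^{(1)}\ne0$ a.e.\ on $\Gamma$, and since $a^{-1}\in L^\infty(\Gamma)$ also $f_-^{(1)}=-af_+^{(1)}\ne0$ a.e.\ on $\Gamma$. Hence for any $f\in\Ker(aP+Q)$ the function $r:=f_+/f_+^{(1)}=f_-/f_-^{(1)}$ is defined a.e.\ on $\Gamma$ and is the boundary function both of a function meromorphic in $D_+$ and of one in $D_-$, with a pole of bounded order at $\infty$; thus $r$ is rational, with poles confined to the zeros of $f_+^{(1)}$ in $D_+$, the zeros of $f_-^{(1)}$ in $D_-$, and $\infty$, of order bounded by the respective multiplicities. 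The map $f\mapsto f_+/f_+^{(1)}$ is a linear bijection of $\Ker(aP+Q)$ onto this space of rational functions, with inverse $r\mapsto rf_+^{(1)}+rf_-^{(1)}$ (which lies in $L^{p(\cdot)}(\Gamma,\varrho)$ because $r\in L^\infty(\Gamma)$, and in $\Ker(aP+Q)$ by Lemma~\ref{le:Hardy-Smirnov}). Therefore $\dim\Ker(aP+Q)<\infty$ provided $f_+^{(1)}$ has finitely many zeros in $D_+$ and $f_-^{(1)}$ finitely many in $D_-$; equivalently, provided the kernel of $aP+Q$ on $L^{1+\eps}(\Gamma)$ — which contains $\Ker(aP+Q)$ by Lemma~\ref{le:embedding} — is finite-dimensional. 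This last statement is that the homogeneous Riemann boundary value problem $af_++f_-=0$ with $a\in PC(\Gamma)$, $a^{\pm1}\in L^\infty(\Gamma)$ and finitely many jumps has finite-dimensional solution space in Smirnov classes, i.e.\ that $-a=f_-^{(1)}/f_+^{(1)}$ realizes a generalized factorization of $-a$ with a single finite partial index — the classical factorization theory of piecewise continuous symbols. This is the main obstacle of the proof: all the remaining steps are soft, but this one genuinely uses the structure of $PC$ functions.

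For (ii), suppose $aP+Q$ is semi-Fredholm but not Fredholm; by the necessity part of Theorem~\ref{th:criterion-Fredholmness-scalar}, \eqref{eq:Fredholm} fails at some jump $t_0$ of $a$. For each jump point $t_j$ of $a$ choose a piecewise continuous function $c_{t_j,\eta}$ with a single jump of ratio $e^{2\pi i\eta}$ at $t_j$, continuous and nonvanishing on $\Gamma\setminus\{t_j\}$, with $\|c_{t_j,\eta}-1\|_\infty\le C_j|\eta|$, and put $a_n:=a\prod_j c_{t_j,\eta_{j,n}}$. Passing from $a$ to $a_n$ shifts the left-hand side of \eqref{eq:Fredholm} at $t_j$ by $-\eta_{j,n}$ and leaves it unchanged at every other point; for each $j$ at most one value of $\eta$ in $(0,1)$ makes \eqref{eq:Fredholm} fail at $t_j$, so picking $\eta_{j,n}\in(0,1/n)$ outside these finitely many exceptional values we get, by Theorem~\ref{th:criterion-Fredholmness-scalar}, that $a_nP+Q$ is Fredholm for every $n$, while $\eta_{j,n}\to0$. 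Since $\|(a_nP+Q)-(aP+Q)\|_{\cB}\le\|a_n-a\|_\infty\|P\|_{\cB}\to0$ and $aP+Q$ is semi-Fredholm, Lemma~\ref{le:stability} makes $aP+Q$ Fredholm, a contradiction; this proves (ii). Putting (i) and (ii) together: closedness of $\im(aP+Q)$ together with $\dim\Ker(aP+Q)<\infty$ makes $aP+Q$ $n$-normal, hence semi-Fredholm, hence Fredholm, and Theorem~\ref{th:criterion-Fredholmness-scalar} then gives \eqref{eq:Fredholm} at every $t\in\Gamma$, completing the proof.
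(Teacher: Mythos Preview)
Your argument is correct in outline but takes a substantially longer route than the paper's, and your step~(i) carries unnecessary baggage.

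The paper's proof is a two--line contrapositive: it observes that since $a$ has finitely many jumps, for all sufficiently small $\eps>0$ condition~\eqref{eq:Fredholm} holds with the constant exponent $1+\eps$ in place of $p(\cdot)$ (because $1/(1+\eps)$ is close to $1$, so the left-hand side avoids the lattice $\Z$ at every jump). Hence Theorem~\ref{th:criterion-Fredholmness-scalar}, applied with constant exponent, makes both $aP+Q$ and $a^{-1}P+Q$ Fredholm on $L^{1+\eps_0}(\Gamma)$ for some $\eps_0$ chosen via Lemma~\ref{le:embedding} so that $L^{p(\cdot)}(\Gamma,\varrho)$ and $L^{p^*(\cdot)}(\Gamma,\varrho^{-1})$ both embed into $L^{1+\eps_0}(\Gamma)$. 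The embedding gives $n(aP+Q;L^{p(\cdot)}(\Gamma,\varrho))\le n(aP+Q;L^{1+\eps_0}(\Gamma))<\infty$, and Lemma~\ref{le:duality}(b) together with the embedding of the dual gives $d(aP+Q;L^{p(\cdot)}(\Gamma,\varrho))=n(a^{-1}P+Q;L^{p^*(\cdot)}(\Gamma,\varrho^{-1}))<\infty$. Thus if~\eqref{eq:Fredholm} fails, $aP+Q$ is not Fredholm by Theorem~\ref{th:criterion-Fredholmness-scalar}, yet both defect numbers are finite, so the image cannot be closed.

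Compared with this, your step~(i) reaches the same conclusion $n<\infty$ but via a detour: the entire Smirnov-class discussion (the map $f\mapsto f_+/f_+^{(1)}$, rationality of $r$, etc.) is conditional on $f_+^{(1)},f_-^{(1)}$ having finitely many zeros, which you do not prove, and you then fall back on the inclusion $\Ker(aP+Q;L^{p(\cdot)}(\Gamma,\varrho))\subset\Ker(aP+Q;L^{1+\eps}(\Gamma))$ and an appeal to ``classical factorization theory''. That appeal is exactly what Theorem~\ref{th:criterion-Fredholmness-scalar} (with constant exponent) delivers for free, so the Smirnov paragraph can be deleted. Your step~(ii) is a genuine alternative to the paper's duality argument: instead of bounding $d$ directly, you perturb $a$ to $a_n$ satisfying~\eqref{eq:Fredholm} everywhere and invoke Lemma~\ref{le:stability}. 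This is correct and self-contained, but it is more work than the single application of Lemma~\ref{le:duality}(b) the paper uses, and it still ultimately rests on Theorem~\ref{th:criterion-Fredholmness-scalar}. In short: the paper bounds both $n$ and $d$ by embedding and duality; you bound only $n$ and then upgrade semi-Fredholm to Fredholm by perturbation.
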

\begin{proof}
The idea of the proof is borrowed from \cite[Proposition~7.16]{BK97}.
The sufficiency part follows from Theorem~\ref{th:criterion-Fredholmness-scalar}.
Let us prove the necessity part. Assume that $a(t\pm 0)\ne 0$ for all
$t\in\Gamma$. Since the number of jumps, that is, the points $t\in\Gamma$ at
which $a(t-0)\ne a(t+0)$, is finite, it is clear that
\[
\begin{split}
&
-\frac{1}{2\pi}\arg\frac{a(t-0)}{a(t+0)}
+\frac{\delta(t)}{2\pi}\log\left|\frac{a(t-0)}{a(t+0)}\right|
+\frac{1}{1+\eps}\notin\Z,
\\[3mm]
&
-\frac{1}{2\pi}\arg\frac{a(t+0)}{a(t-0)}
+\frac{\delta(t)}{2\pi}\log\left|\frac{a(t+0)}{a(t-0)}\right|
+\frac{1}{1+\eps}\notin\Z
\end{split}
\]
for all $t\in\Gamma$ and all sufficiently small $\eps>0$. By
Theorem~\ref{th:criterion-Fredholmness-scalar}, the operators $aP+Q$
and $a^{-1}P+Q$ are Fredholm on the Lebesgue space $L^{1+\eps}(\Gamma)$
whenever $\eps>0$ is sufficiently small. From Lemma~\ref{le:embedding}
it follows that we can pick $\eps_0>0$ such that
\[
L^{p(\cdot)}(\Gamma,\varrho)\subset L^{1+\eps_0}(\Gamma),
\quad
L^{p^*(\cdot)}(\Gamma,\varrho^{-1})\subset L^{1+\eps_0}(\Gamma)
\]
and $aP+Q$, $a^{-1}P+Q$ are Fredholm on $L^{1+\eps_0}(\Gamma)$. Then
\begin{equation}\label{eq:criterion-closedness-1}
n\big(aP+Q;L^{p(\cdot)}(\Gamma,\varrho)\big)
\le
n\big(aP+Q;L^{1+\eps_0}(\Gamma)\big)<\infty,
\end{equation}
and taking into account Lemma~\ref{le:duality}(b),
\begin{equation}\label{eq:criterion-closedness-2}
\begin{split}
d\big(aP+Q;L^{p(\cdot)}(\Gamma,\varrho)\big)
&=
n\big(a^{-1}P+Q;L^{p^*(\cdot)}(\Gamma,\varrho^{-1})\big)
\\
&\le
n\big(a^{-1}P+Q;L^{1+\eps_0}(\Gamma)\big)<\infty.
\end{split}
\end{equation}
If (\ref{eq:Fredholm}) does not hold, then $aP+Q$ is not Fredholm on
$L^{p(\cdot)}(\Gamma,\varrho)$ in view of Theorem~\ref{th:criterion-Fredholmness-scalar}.
From this fact and (\ref{eq:criterion-closedness-1})--(\ref{eq:criterion-closedness-2})
we conclude that the image of $aP+Q$ is not closed in $L^{p(\cdot)}(\Gamma,\varrho)$,
which contradicts the hypothesis.
\end{proof}
\section{Necessary condition for semi-Fredholmness of $aP+bQ$.\\ The matrix case}
\label{sec:necessity}
\subsection{Two lemmas on approximation of measurable matrix functions}
Let the algebra $L_{N\times N}^\infty(\Gamma)$ be equipped with the norm
\[
\|a\|_{L_{N\times N}^\infty(\Gamma)}:=N\max_{1\le i,j\le N}\|a_{ij}\|_{L^\infty(\Gamma)}.
\]
\begin{lemma}[see \cite{LS87}, Lemma~3.4]
\label{le:approx1}
Let $\Gamma$ be a rectifiable Jordan curve. Suppose $a$ is a measurable
$N\times N$ matrix function on $\Gamma$ such that
$a^{-1}\notin L_{N\times N}^\infty(\Gamma)$. Then for every $\eps>0$ there
exists a matrix function $a_\eps\in L_{N\times N}^\infty(\Gamma)$ such that
$\|a_\eps\|_{L_{N\times N}^\infty(\Gamma)}<\eps$ and the matrix function
$a-a_\eps$ degenerates on a subset $\gamma\subset\Gamma$ of positive measure.
\end{lemma}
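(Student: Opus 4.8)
The plan is to split into two cases according to whether $a(t)$ is singular on a set of positive measure. Put $\gamma_0:=\{t\in\Gamma:\det a(t)=0\}$. If $|\gamma_0|>0$ there is nothing to prove: take $a_\eps\equiv 0$ and $\gamma=\gamma_0$. So assume $\det a(t)\neq 0$ for almost every $t\in\Gamma$, so that $a^{-1}$ is a genuine measurable matrix function on $\Gamma$; the hypothesis $a^{-1}\notin L_{N\times N}^\infty(\Gamma)$ then says exactly that $t\mapsto\|a(t)^{-1}\|$ fails to be essentially bounded (the spectral norm and the entrywise maximum differ only by an $N$-dependent factor). Consequently, for every $\delta>0$ the set $E_\delta:=\{t\in\Gamma:\|a(t)^{-1}\|>1/\delta\}$ has positive measure. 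The goal is to produce, on a positive-measure subset of $E_\delta$ with $\delta$ fixed small at the end, a perturbation $a_\eps$ having a single nonzero entry of modulus $O_N(\delta)$ that turns one row of $a$ into a linear combination of the remaining rows, thereby making $a-a_\eps$ singular there; this is the measurable matrix analogue of the obvious scalar statement.

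First I would produce, measurably in $t$, an approximate left null vector of $a(t)$ on $E_\delta$, using the adjugate. Let $C_{ij}(t)$ be the $(i,j)$ cofactor of $a(t)$ and let $w^{(j)}(t):=\big(C_{1j}(t),\dots,C_{Nj}(t)\big)$ be the $j$-th row of $\mathrm{adj}\,a(t)$; the classical cofactor identities give $w^{(j)}(t)\,a(t)=\det(a(t))\,e_j^{T}$ for every $j$. On $E_\delta$ one has $N\max_{i,l}|C_{il}(t)|\ge\|\mathrm{adj}\,a(t)\|=|\det a(t)|\,\|a(t)^{-1}\|>|\det a(t)|/\delta$, so, choosing $j(t)$ to be the least index maximizing $\|w^{(j)}(t)\|$ and setting $u(t):=w^{(j(t))}(t)/\|w^{(j(t))}(t)\|$, one gets a measurable unit vector with $u(t)\,a(t)=c(t)\,e_{j(t)}^{T}$, where $|c(t)|=|\det a(t)|/\|w^{(j(t))}(t)\|<N\delta$. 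Everything here is measurable because the $C_{ij}$ are polynomials in the entries of $a$ and the normalizing denominator is measurable and nonzero on $E_\delta$; no measurable selection theorem is required.

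Since $u(t)$ is a unit vector, $\max_{1\le l\le N}|u_l(t)|\ge 1/\sqrt N$; let $k(t)$ be the least index $l$ realizing this and split $E_\delta$ into the measurable sets $\{t\in E_\delta:k(t)=l\}$, $l=1,\dots,N$. At least one of them, call it $\gamma$, has positive measure, and on $\gamma$ the index $k(t)$ equals a fixed $k$. For $t\in\gamma$ let $b(t)$ be $a(t)$ with its $k$-th row $r_k(t)$ replaced by $\tilde r_k(t):=-\sum_{i\neq k}\big(u_i(t)/u_k(t)\big)\,r_i(t)$, and for $t\notin\gamma$ put $b(t):=a(t)$; set $a_\eps:=a-b$. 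Then $a_\eps(t)=0$ off $\gamma$, while for almost every $t\in\gamma$ the matrix $a_\eps(t)$ has a single nonzero entry, in position $(k,j(t))$, namely $c(t)/u_k(t)$, of modulus $<N^{3/2}\delta$. Hence $\|a_\eps\|_{L_{N\times N}^\infty(\Gamma)}=N\max_{i,l}\|(a_\eps)_{il}\|_{L^\infty(\Gamma)}\le N^{5/2}\delta$, which is $<\eps$ provided $\delta$ was chosen $<\eps N^{-5/2}$; and on $\gamma$ the $k$-th row of $(a-a_\eps)(t)=b(t)$ lies in $\mathrm{span}\{r_i(t):i\neq k\}$, so $\det\big((a-a_\eps)(t)\big)=0$ for almost every $t\in\gamma$. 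Thus $a-a_\eps$ degenerates on $\gamma$, a set of positive measure.

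The only genuinely delicate point is keeping every selection measurable, and the adjugate formulas are tailored to that: they reduce the construction of an approximate null vector to algebraic operations on the entries of $a$, so measurability is automatic, and the remaining ingredients are elementary norm estimates and finitely many measurable partitions. One could alternatively invoke a measurable selection theorem for $t\mapsto\Ker a(t)$ on the exceptional set, but the cofactor route is more elementary and self-contained.
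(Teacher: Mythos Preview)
The paper does not prove this lemma; it is quoted from \cite{LS87}, Lemma~3.4, so there is no in-paper argument to compare against. Your proposal is a correct, self-contained proof.

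The argument is sound in both cases. When $\det a$ vanishes on a set of positive measure the choice $a_\eps=0$ is legitimate. In the main case, the adjugate construction works as claimed: with $w^{(j)}$ the $j$-th row of $\mathrm{adj}\,a$, the identity $w^{(j)}a=(\det a)\,e_j^T$ gives an explicit approximate left null vector after normalization; the chain $N\max_{i,l}|C_{il}|\ge\|\mathrm{adj}\,a\|=|\det a|\,\|a^{-1}\|>|\det a|/\delta$ yields $\|w^{(j(t))}\|>|\det a|/(N\delta)$ for the maximizing index $j(t)$, and hence $|c(t)|<N\delta$. Replacing the $k$-th row of $a$ by $-\sum_{i\ne k}(u_i/u_k)r_i$ on the positive-measure set $\gamma$ where $|u_k|\ge 1/\sqrt N$ then forces $\det(a-a_\eps)=0$ there, and the single nonzero entry of $a_\eps$ has modulus $<N^{3/2}\delta$, giving $\|a_\eps\|_{L_{N\times N}^\infty(\Gamma)}\le N^{5/2}\delta<\eps$ for $\delta<\eps N^{-5/2}$. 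Measurability is clear throughout because every step---cofactors, maxima over finitely many indices, normalization by a nonvanishing measurable scalar---is an elementary operation on the measurable entries of $a$; no selection theorem is needed.

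Two very minor remarks. First, you may wish to state once which matrix norm you use for $\|a(t)^{-1}\|$ (the spectral norm is implicit in the inequality $\|M\|\le N\max_{i,l}|M_{il}|$), simply to make the constants transparent. Second, ``the least index $l$ realizing this'' is slightly ambiguous; saying ``the least $l$ with $|u_l(t)|\ge 1/\sqrt N$'' removes any doubt. Neither point affects correctness.
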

\begin{lemma}[see \cite{LS87}, Lemma~3.6]
\label{le:approx2}
Let $\Gamma$ be a rectifiable Jordan curve. If $a$ belongs to
$L_{N\times N}^\infty(\Gamma)$, then for every $\eps>0$ there exists an
$a_\eps\in L_{N\times N}^\infty(\Gamma)$ such that
$\|a-a_\eps\|_{L_{N\times N}^\infty(\Gamma)}<\eps$ and
$a_\eps^{-1}\in L_{N\times N}^\infty(\Gamma)$.
\end{lemma}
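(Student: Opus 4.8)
The statement becomes trivial if one forgets about measurability: the invertible matrices are dense among the $N\times N$ complex matrices, so pointwise one may perturb $a(t)$ slightly to make it invertible. The only real content is therefore to do this \emph{measurably in $t$, with a bound on $\|a_\eps(t)^{-1}\|$ that is uniform in $t$}. The tempting route --- diagonalise $a(t)$ by a singular value decomposition and lift the singular values lying below $\eps$ up to $\eps$ --- does not work directly, because on the (possibly non-null) set where $a(t)$ is singular such a decomposition cannot be chosen measurably. So the plan is to correct $a$ by a \emph{scalar} multiple of the identity, the scalar being picked from a finite set by an obviously measurable rule.

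Set $r:=\eps/(2N)$, let $\omega_0,\dots,\omega_N$ be the $N+1$ points $re^{2\pi ik/(N+1)}$, and put $\rho:=r\sin(\pi/(N+1))>0$. The first, purely arithmetic, observation is that any two distinct $\omega_k$ are at distance at least $2\rho$, so any open disk of radius $\rho$ contains at most one of them. Next, for a fixed $t$ consider the monic polynomial $z\mapsto\det(a(t)+zI)$; its $N$ zeros are $-\mu_1(t),\dots,-\mu_N(t)$, where $\mu_j(t)$ are the eigenvalues of $a(t)$, all of modulus at most $R:=\|a\|_{L_{N\times N}^\infty(\Gamma)}$ (which dominates the operator norm $\|a(t)\|$ a.e.). Since there are $N$ zeros but $N+1$ candidate points $\omega_k$, some $\omega_k$ lies at distance $\ge\rho$ from every zero, and for that $k$
\[
|\det(a(t)+\omega_k I)|=\prod_{j=1}^N|\omega_k+\mu_j(t)|\ge\rho^N .
\]
Combined with the elementary inequality $\|M^{-1}\|\le\|M\|^{N-1}/|\det M|$ for an invertible matrix $M$ (valid because $|\det M|$ is the product of the singular values of $M$ and $\|M^{-1}\|$ is the reciprocal of the smallest), this shows that $a(t)+\omega_k I$ is invertible with $\|(a(t)+\omega_k I)^{-1}\|\le (R+r)^{N-1}/\rho^N$, a bound independent of $t$. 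Note that the eigenvalues of $a(t)$ enter only in this fixed-$t$ pigeonhole count; they need not depend measurably on $t$.

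The selection step is then immediate: define $k(t):=\min\{k\in\{0,\dots,N\}: |\det(a(t)+\omega_kI)|\ge\rho^N\}$. By the previous paragraph this set is nonempty for a.e.\ $t$, and $k(t)$ is measurable because each $t\mapsto\det(a(t)+\omega_kI)$ is a polynomial in the measurable entries of $a$. Put $a_\eps:=a+\omega_{k(\cdot)}I$. This is a bounded measurable matrix function, hence lies in $L_{N\times N}^\infty(\Gamma)$; since $a-a_\eps=-\omega_{k(\cdot)}I$ has only diagonal entries, each of modulus $r$, one gets $\|a-a_\eps\|_{L_{N\times N}^\infty(\Gamma)}=Nr=\eps/2<\eps$; and by the estimate above $a_\eps(t)$ is invertible with $\|a_\eps(t)^{-1}\|\le(R+r)^{N-1}/\rho^N$ for a.e.\ $t$, so all entries of $a_\eps^{-1}$ are essentially bounded and $a_\eps^{-1}\in L_{N\times N}^\infty(\Gamma)$.

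The only delicate point, as flagged at the outset, is the measurability of the correction; this is exactly why density of the invertible matrices does not settle the matter and why one proceeds via the finite, explicitly comparable family $\{\omega_k\}$. The pigeonhole count and the norm inequality are elementary, and the precise constants are immaterial (any $r<\eps/N$ will do). This parallels the treatment of Lemma~\ref{le:approx1}, and it is precisely the form in which the result is used in the sequel, to replace the coefficients of singular integral operators by ones invertible in $L_{N\times N}^\infty(\Gamma)$.
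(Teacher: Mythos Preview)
The paper does not supply a proof of this lemma; it merely cites \cite[Lemma~3.6]{LS87}. Your argument is correct and self-contained: the pigeonhole step (with $N+1$ equally spaced points $\omega_k$ of modulus $r$ and separation $2\rho$, so that the $N$ zeros of $z\mapsto\det(a(t)+zI)$ can each be within distance $<\rho$ of at most one $\omega_k$) indeed guarantees, for every $t$, an index $k$ with $|\det(a(t)+\omega_kI)|\ge\rho^N$; the selection $k(t)=\min\{k:|\det(a(t)+\omega_kI)|\ge\rho^N\}$ is measurable since each determinant is a polynomial in the measurable entries of $a$; and the Cramer-type bound $\|M^{-1}\|\le\|M\|^{N-1}/|\det M|$ yields the uniform estimate on $a_\eps(t)^{-1}$ needed to conclude $a_\eps^{-1}\in L_{N\times N}^\infty(\Gamma)$. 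The norm computation $\|a-a_\eps\|_{L_{N\times N}^\infty(\Gamma)}=Nr=\eps/2$ is consistent with the norm fixed in the paper.

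The argument in \cite{LS87} is organized slightly differently but rests on the same idea. There one first observes that the eigenvalues of $a(t)$ lie a.e.\ in a fixed disk, so only finitely many of the scalars $\lambda_n=\eps e^{-n}/N$ can be eigenvalues of $-a(t)$; choosing the first $n$ for which $|\det(a(t)+\lambda_nI)|$ exceeds a threshold depending on the gaps $|\lambda_n-\lambda_m|$ gives a measurable correction with uniformly bounded inverse. Your version with $(N+1)$st roots of unity makes the separation constant $\rho$ explicit and the pigeonhole count especially transparent, which is a minor expository gain; the substance is the same.
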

\subsection{Necessary condition for $d$-normality of $aP+Q$ and $P+aQ$}
\begin{lemma}\label{le:d-normality}
Suppose $\Gamma$ is a Carleson Jordan curve, $p:\Gamma\to(1,\infty)$ is a
continuous function satisfying {\rm(\ref{eq:Dini-Lipschitz})}, and $\varrho$
is a Khvedelidze weight satisfying {\rm(\ref{eq:Khvedelidze})}. If
$a\in L_{N\times N}^\infty(\Gamma)$ and at least one of the operators
$aP+Q$ or $P+aQ$ is $d$-normal on $L_N^{p(\cdot)}(\Gamma,\varrho)$, then
$a^{-1}\in L_{N\times N}^\infty(\Gamma)$.
\end{lemma}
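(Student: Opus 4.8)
The plan is to prove the contrapositive: if $a^{-1}\notin L_{N\times N}^\infty(\Gamma)$, then neither $aP+Q$ nor $P+aQ$ is $d$-normal on $L_N^{p(\cdot)}(\Gamma,\varrho)$. This follows the scheme of Litvinchuk and Spitkovsky \cite{LS87} (cf.\ also \cite{Spitkovsky92,GKS93}). Throughout put $X:=L^{p(\cdot)}(\Gamma,\varrho)$ and $X^*:=L^{p^*(\cdot)}(\Gamma,\varrho^{-1})$, identified via Theorem~\ref{th:linear-functional}; note that the pair $(p^*,\varrho^{-1})$ again satisfies \eqref{eq:Dini-Lipschitz} and \eqref{eq:Khvedelidze}, so that Theorem~\ref{th:KPS}, Lemma~\ref{le:basic}, Lemma~\ref{le:Hardy-Smirnov} and Lemma~\ref{le:embedding} apply on $X^*$ as well.

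Assume $a^{-1}\notin L_{N\times N}^\infty(\Gamma)$ and, say, that $aP+Q$ is $d$-normal on $X_N$. By Lemma~\ref{le:approx1}, for every $\eps>0$ there is an $a_\eps$ with $\|a_\eps\|_{L_{N\times N}^\infty(\Gamma)}<\eps$ such that $b:=a-a_\eps$ degenerates on a set $\gamma\subset\Gamma$ of positive measure. Since $P$ is bounded on $X_N$ (Theorem~\ref{th:KPS}) and the multiplication operator $a_\eps I$ has norm of order $\|a_\eps\|_{L_{N\times N}^\infty(\Gamma)}$, we have $\|(bP+Q)-(aP+Q)\|_{\cB(X_N)}=\|a_\eps P\|_{\cB(X_N)}=O(\eps)$; hence, by Theorem~\ref{th:stability}, the operator $bP+Q$ is $d$-normal on $X_N$ once $\eps$ is small enough, and it suffices to reach a contradiction from this. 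Passing to adjoints exactly as in the proof of Lemma~\ref{le:duality-matrix}, $bP+Q$ is $d$-normal on $X_N$ if and only if $(bP+Q)^*=H_\Gamma(P+Qb^TI)H_\Gamma$ is $n$-normal on $X_N^*$, and since $H_\Gamma$ is a conjugate-linear homeomorphism this forces $P+Qb^TI$ to be $n$-normal on $X_N^*$. An $n$-normal operator is bounded below modulo a finite-dimensional subspace, a property inherited by the restriction to any closed subspace; restricting $P+Qb^TI$ to its invariant subspace $QX_N^*$, on which it coincides with $Qb^TQ$, we conclude that $T:=Qb^TQ|_{QX_N^*}$ is $n$-normal on $QX_N^*$ (alternatively invoke Theorem~\ref{th:Spitkovsky}(b) for the triangular block form of $P+Qb^TI$ with respect to the decomposition of $X_N^*$ into $PX_N^*$ and $QX_N^*$). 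By Lemma~\ref{le:Hardy-Smirnov}, $QX_N^*=E_0^1(D_-)_N\cap L_N^{p^*(\cdot)}(\Gamma,\varrho^{-1})$; the case ``$P+bQ$ $d$-normal'' leads symmetrically to the $n$-normality of $Pb^TP|_{PX_N^*}$ on $PX_N^*=E^1(D_+)_N\cap L_N^{p^*(\cdot)}(\Gamma,\varrho^{-1})$.

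It remains to contradict the $n$-normality of $T=Qb^TQ|_{QX_N^*}$, which is the technical heart. First I would fix, for each positive integer $m$, a set $\gamma_m\subset\gamma$ with $|\gamma_m|>0$ and a \emph{constant} unit vector $e_m\in\C^N$ such that $|b^T(\tau)e_m|\le C_0/m$ for a.e.\ $\tau\in\gamma_m$, with $C_0$ depending only on $b$ and $N$: pick a measurable unit vector field $v$ on $\gamma$ with $b^Tv=0$ a.e., cover the unit sphere of $\C^N$ by finitely many balls of radius $1/m$, and take for $\gamma_m$ a positive-measure subset of $\gamma$ on which $v$ stays in one such ball, centred at $e_m$. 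Next, for each fixed $m$, I would construct functions $\phi_j^{(m)}\in QL^{p^*(\cdot)}(\Gamma,\varrho^{-1})$ ($j\ge 1$) with $\|\phi_j^{(m)}\|_{p^*(\cdot),\varrho^{-1}}=1$, with $\phi_j^{(m)}\to 0$ weakly in $X^*$, and with $\|\phi_j^{(m)}\chi_{\Gamma\setminus\gamma_m}\|_{p^*(\cdot),\varrho^{-1}}\to 0$ as $j\to\infty$; these are obtained by transplanting, via the conformal map of $D_-$ onto the exterior of the unit disc, suitably normalized rational functions concentrated ever more strongly near a Lebesgue density point of $\gamma_m$, the necessary norm estimates being reduced to the ordinary Lebesgue space $L^{1+\eps_0}(\Gamma)$ through the continuous embedding of Lemma~\ref{le:embedding}. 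Setting $g_j^{(m)}:=\phi_j^{(m)}e_m\in QX_N^*$ we get $\|g_j^{(m)}\|=1$, $g_j^{(m)}\to 0$ weakly, and $Tg_j^{(m)}=Q\big(\phi_j^{(m)}\,b^Te_m\big)$, so that $\|Tg_j^{(m)}\|\le\|Q\|\,C_0\big(m^{-1}+\|\phi_j^{(m)}\chi_{\Gamma\setminus\gamma_m}\|\big)$ and hence $\limsup_{j\to\infty}\|Tg_j^{(m)}\|\le\|Q\|\,C_0/m$. If $T$ were $n$-normal there would be $C>0$ and a closed finite-codimensional $M\subset QX_N^*$ with $\|Tg\|\ge C\|g\|$ on $M$; writing $g_j^{(m)}=p_j+r_j$ with $p_j\in M$ and $r_j$ in a fixed finite-dimensional complement, the weak nullity of $g_j^{(m)}$ gives $\|r_j\|\to 0$ (a finite-rank projection is compact), so $\|p_j\|\to 1$ and $\limsup_j\|Tp_j\|=\limsup_j\|Tg_j^{(m)}\|\le\|Q\|\,C_0/m$, whence $C\le\|Q\|\,C_0/m$; letting $m\to\infty$ yields $C\le 0$, a contradiction. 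The case of $Pb^TP|_{PX_N^*}$ is identical, with $E^1(D_+)$ in place of $E_0^1(D_-)$ and the conformal map of $D_+$ onto the unit disc.

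The main obstacle is exactly this singular-sequence construction. Unlike in the scalar case, degeneracy of the matrix function $b$ on $\gamma$ supplies only a \emph{measurable} family of null vectors, so analytic test vectors pointing in the null direction cannot be produced directly; the pigeonhole step on the unit sphere, which yields \emph{constant} near-null vectors on positive-measure subsets, is what circumvents this and reduces the problem to a scalar concentration estimate. Making that estimate quantitative in the weighted variable-exponent norm is the remaining delicate point, which I would handle by descending to $L^{1+\eps_0}(\Gamma)$ via Lemma~\ref{le:embedding} and using the density of $L^\infty(\Gamma)$ in $L^{p(\cdot)}(\Gamma,\varrho)$ to pass from $L^1$-smallness of the $\phi_j^{(m)}$ to weak convergence in $X^*$.
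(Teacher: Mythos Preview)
Your approach is genuinely different from the paper's. The paper argues directly from $d$-normality without duality or singular sequences: since the image of $P+a_\eps Q$ has finite codimension, it meets each of the $N$ infinite-dimensional manifolds $M_j$ of vector polynomials in $1/z$ (vanishing at $\infty$) supported in the $j$-th coordinate; writing $\psi_j^+ + a_\eps\psi_j^- = h_j$ with $0\ne h_j\in M_j$, assembling the $N\times N$ matrices $\Psi_\pm,H$ and taking determinants gives $\det(H-\Psi_+)=\det a_\eps\cdot\det\Psi_-$. The left side lies in $E^{1/N}(D_+)+\cR$ by Lemmas~\ref{le:Hardy-Smirnov} and~\ref{le:Hoelder-Smirnov} and has a pole of order $\ge N$ at $0$, hence is not identically zero, yet it vanishes on $\gamma$ since $\det a_\eps$ does --- contradicting the Lusin--Privalov uniqueness theorem (Theorem~\ref{th:Lusin-Privalov}). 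No quantitative estimates in the variable-exponent norm are needed.

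Your reduction via adjoints and Theorem~\ref{th:Spitkovsky} to the $n$-normality of $Qb^TQ|_{QX_N^*}$, and the pigeonhole trick producing constant near-null vectors $e_m$, are correct and handle the matrix aspect neatly. The genuine gap is the concentration estimate $\|\phi_j^{(m)}\chi_{\Gamma\setminus\gamma_m}\|_{X^*}\to 0$: Lemma~\ref{le:embedding} gives $X^*\hookrightarrow L^{1+\eps_0}$, which yields only \emph{lower} bounds on $\|\cdot\|_{X^*}$ and so cannot deliver this upper bound; for the same reason it does not by itself give $\|\phi_j^{(m)}\|_{L^1}\to 0$. To salvage the argument you would have to estimate directly in the weighted variable-exponent norm --- feasible if the density point $t_0$ is chosen away from the weight nodes $t_k$, so that near $t_0$ the norm is locally comparable to an unweighted $L^{p^*(t_0)}$ norm via \eqref{eq:Dini-Lipschitz} --- but this is exactly the analysis you have not supplied, and on a general Carleson curve it is not a mere transplant from the unit disc. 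The paper's determinant/Lusin--Privalov argument bypasses all of this.
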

\begin{proof}
This lemma is proved by analogy with \cite[Theorem~3.13]{LS87}.
For definiteness, let us consider the operator $P+aQ$. Assume that
$a^{-1}\notin L_{N\times N}^\infty(\Gamma)$. By Lemma~\ref{le:approx1},
for every $\eps>0$ there exists an $a_\eps\in L_{N\times N}^\infty(\Gamma)$
such that $\|a-a_\eps\|_{L_{N\times N}^\infty(\Gamma)}<\eps$ and $a_\eps$
degenerates on a subset $\gamma\subset\Gamma$ of positive measure. We have
\[
\|(P+aQ)-(P+a_\eps Q)\|_{\cB(L_N^{p(\cdot)}(\Gamma,\varrho))}
\le
\|a-a_\eps\|_{L_{N\times N}^\infty(\Gamma)}\|Q\|_{\cB(L_N^{p(\cdot)}(\Gamma,\varrho))}
=O(\eps)
\]
as $\eps\to 0$. Hence there is an $\eps>0$ such that $P+a_\eps Q$ is $d$-normal
together with $P+aQ$ due to Theorem~\ref{th:stability}. Since the image
of the operator $P+a_\eps Q$ is a subspace of finite codimension in
$L_N^{p(\cdot)}(\Gamma,\varrho)$, it has a nontrivial intersection with any
infinite-dimensional linear manifold contained in $L_N^{p(\cdot)}(\Gamma,\varrho)$.
In particular, the image of $P+a_\eps Q$ has a nontrivial intersection with linear
manifolds $M_j$, $j\in\{1,\dots,N\}$, of those vector-functions, the $j$-th
component of which is a polynomial of $1/z$ vanishing at infinity and all the
remaining components are identically zero. That is, there exist
\[
\psi_j^+\in PL_N^{p(\cdot)}(\Gamma,\varrho),
\quad
\psi_j^-\in QL_N^{p(\cdot)}(\Gamma,\rho),
\quad
h_j\in M_j,
\quad
h_j\not\equiv 0
\]
such that $\psi_j^++a_\eps\psi_j^-=h_j$ for all $j\in\{1,\dots,N\}$. Consider
the $N\times N$ matrix functions
\[
\Psi_+:=[\psi_1^+,\psi_2^+,\dots,\psi_N^+],
\quad
\Psi_-:=[\psi_1^-,\psi_2^-,\dots,\psi_N^-],
\quad
H:=[h_1,h_2,\dots,h_N],
\]
where $\psi_j^+$, $\psi_j^-$, and $h_j$ are taken as columns. Then
$H-\Psi_+=a_\eps\Psi_-$. Therefore,
\[
\det(H-\Psi_+)=\det a_\eps\det\Psi_-
\quad\mbox{a.e. on}\quad\Gamma.
\]
The left-hand side of this equality is a meromorphic function having a pole
at zero of at least $N$-th order. Thus, it is not identically zero in $D_+$.

On the other hand, each entry of $H-\Psi_+$ belongs to
\[
PL^{p(\cdot)}(\Gamma,\varrho)+\cR\subset E^1(D_+)+\cR
\]
(see Lemma~\ref{le:Hardy-Smirnov}). Hence, by Lemma~\ref{le:Hoelder-Smirnov},
$\det(H-\Psi_+)\in E^{1/N}(D_+)+\cR$ and $\det(H-\Psi_+)$
degenerates on $\gamma$ because $a_\eps$ degenerates on $\gamma$.
In view of Theorem~\ref{th:Lusin-Privalov}, $\det (H-\Psi_+)$
vanishes identically in $D_+$. This is a contradiction. Thus,
$a^{-1}$ belongs to $L_{N\times N}^\infty(\Gamma)$.
\end{proof}
\subsection{Necessary condition for semi-Fredholmness of $aP+bQ$}
\begin{theorem}\label{th:semi-Fredholmness-necessity}
Let $\Gamma$ be a Carleson Jordan curve, let $p:\Gamma\to(1,\infty)$ be a
continuous function satisfying {\rm (\ref{eq:Dini-Lipschitz})}, and let $\varrho$
be a Khvedelidze weight satisfying {\rm (\ref{eq:Khvedelidze})}.
If the coefficients $a$ and $b$ belong to $L_{N\times N}^\infty(\Gamma)$ and
the operator $aP+bQ$ is semi-Fredholm on
$L_N^{p(\cdot)}(\Gamma,\varrho)$, then $a^{-1},b^{-1}\in L_{N\times N}^\infty(\Gamma)$.
\end{theorem}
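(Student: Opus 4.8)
The plan is to reduce the semi-Fredholmness of $aP+bQ$ to the $d$-normality of an operator of the form $cP+Q$ or $P+cQ$, to which Lemma~\ref{le:d-normality} applies. First I would dispose of the case where one of the coefficients, say $b$, is already invertible in $L_{N\times N}^\infty(\Gamma)$: then one can write $aP+bQ = (b(b^{-1}a)P + bQ) = b(b^{-1}aP + Q)$, and since $bI$ is invertible, the semi-Fredholmness of $aP+bQ$ is equivalent to that of $(b^{-1}a)P+Q$. If this latter operator is $n$-normal, pass to the adjoint via Lemma~\ref{le:duality-matrix} (or rather to $b^{-1}aP+Q$ being $d$-normal, using that $n$-normality of $cP+Q$ on $L_N^{p(\cdot)}$ corresponds to $d$-normality of $c^TP+Q$ on the dual $L_N^{p^*(\cdot)}$); in either case we land on a $d$-normal operator of the form $cP+Q$, so Lemma~\ref{le:d-normality} forces $c^{-1}\in L_{N\times N}^\infty(\Gamma)$, whence $a^{-1}=(b^{-1}a)^{-1}b^{-1}\in L_{N\times N}^\infty(\Gamma)$. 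Symmetrically, if $a$ is invertible, factor $aP+bQ = a(P+a^{-1}bQ)$ and argue the same way to get $b^{-1}\in L_{N\times N}^\infty(\Gamma)$.

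The real work is the general case, where neither $a$ nor $b$ is assumed invertible a priori. Here I would use the approximation Lemma~\ref{le:approx2}: given $\eps>0$, choose $a_\eps, b_\eps\in L_{N\times N}^\infty(\Gamma)$ with $a_\eps^{-1}, b_\eps^{-1}\in L_{N\times N}^\infty(\Gamma)$ and $\|a-a_\eps\|+\|b-b_\eps\|<\eps$. Then
\[
\|(aP+bQ)-(a_\eps P+b_\eps Q)\|_{\cB(L_N^{p(\cdot)}(\Gamma,\varrho))}
\le
\|a-a_\eps\|\,\|P\| + \|b-b_\eps\|\,\|Q\| = O(\eps)
\]
as $\eps\to 0$. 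Since $aP+bQ$ is semi-Fredholm, Theorem~\ref{th:stability}(b) gives an $\eps>0$ for which $a_\eps P + b_\eps Q$ is semi-Fredholm too, hence in particular $d$-normal (a semi-Fredholm operator is $n$-normal or $d$-normal; if it is $n$-normal we may instead pass first to the adjoint through Lemma~\ref{le:duality-matrix}, which turns $n$-normality of $a_\eps P+b_\eps Q$ into $d$-normality of a companion operator $b_\eps^T P + a_\eps^T Q$ on the dual space whose coefficients are still invertible in $L_{N\times N}^\infty(\Gamma)$ precisely when $a_\eps,b_\eps$ are). Now apply the first paragraph's argument to $a_\eps P + b_\eps Q$: because $b_\eps^{-1}\in L_{N\times N}^\infty(\Gamma)$, this operator equals $b_\eps(b_\eps^{-1}a_\eps P+Q)$, so $d$-normality of $a_\eps P+b_\eps Q$ is equivalent to $d$-normality of $(b_\eps^{-1}a_\eps)P+Q$; by Lemma~\ref{le:d-normality} the coefficient $b_\eps^{-1}a_\eps$ has inverse in $L_{N\times N}^\infty(\Gamma)$, hence $a_\eps$ does, and then $a = a_\eps + (a-a_\eps)$ is a small perturbation of an invertible-in-$L^\infty$ matrix function, so for $\eps$ small enough $a^{-1}\in L_{N\times N}^\infty(\Gamma)$ as well. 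Symmetrically $b^{-1}\in L_{N\times N}^\infty(\Gamma)$.

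The main obstacle I anticipate is bookkeeping the two alternatives ($n$-normal versus $d$-normal) uniformly and making sure that the dualization step via Lemma~\ref{le:duality-matrix} really puts us in the hypotheses of Lemma~\ref{le:d-normality} — in particular that passing to the transposed matrix and to the dual space $L_N^{p^*(\cdot)}(\Gamma,\varrho^{-1})$ preserves all the structural assumptions (the dual exponent $p^*$ still satisfies the Dini--Lipschitz condition \eqref{eq:Dini-Lipschitz} because $p$ does and $p$ is bounded away from $1$ and $\infty$, and $\varrho^{-1}$ is again a Khvedelidze weight satisfying \eqref{eq:Khvedelidze} when $\varrho$ does, since $1/p^*(t_k)+(-\lambda_k)=1-(1/p(t_k)+\lambda_k)\in(0,1)$). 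Once that is set up, the two cases become genuinely symmetric and the perturbation argument closes uniformly; the only other point to be careful about is that invertibility in $L_{N\times N}^\infty(\Gamma)$ is stable under small $L^\infty$-perturbations, which follows from a Neumann series since $\|a-a_\eps\|\,\|a_\eps^{-1}\|<1$ for $\eps$ small.
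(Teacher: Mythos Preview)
Your argument has a genuine gap in the second paragraph. By invoking Lemma~\ref{le:approx2} for \emph{both} coefficients, you arrange that $a_\eps$ and $b_\eps$ are already invertible in $L_{N\times N}^\infty(\Gamma)$ before any Fredholm information is used. Applying Lemma~\ref{le:d-normality} to $(b_\eps^{-1}a_\eps)P+Q$ then only recovers the invertibility of $b_\eps^{-1}a_\eps$, which you already knew. The step ``$a=a_\eps+(a-a_\eps)$ is a small perturbation of an invertible element, so $a^{-1}\in L_{N\times N}^\infty(\Gamma)$'' is where the proof breaks: the Neumann-series estimate needs $\|a-a_\eps\|\,\|a_\eps^{-1}\|<1$, but Lemma~\ref{le:approx2} gives no control whatsoever on $\|a_\eps^{-1}\|$. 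Indeed, if that perturbation argument were valid, it would prove that \emph{every} $a\in L_{N\times N}^\infty(\Gamma)$ is invertible, since Lemma~\ref{le:approx2} applies to arbitrary $a$.

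The paper's proof avoids this by approximating only \emph{one} coefficient and using the semi-Fredholmness to extract genuine information about the \emph{other}. Concretely: choose $a_\eps$ invertible and close to $a$; then $a_\eps P+bQ$ is still semi-Fredholm and equals $a_\eps(P+a_\eps^{-1}bQ)$, so $P+a_\eps^{-1}bQ$ is semi-Fredholm. Now Lemma~\ref{le:d-normality} (after dualizing via Lemma~\ref{le:duality-matrix} in the $n$-normal case, exactly as you describe) forces $(a_\eps^{-1}b)^{-1}\in L_{N\times N}^\infty(\Gamma)$, hence $b^{-1}\in L_{N\times N}^\infty(\Gamma)$. With $b$ now genuinely invertible, return to the \emph{original} operator: $b^{-1}(aP+bQ)=b^{-1}aP+Q$ is semi-Fredholm, and a second application of Lemma~\ref{le:d-normality} (again possibly after dualizing) yields $a^{-1}b\in L_{N\times N}^\infty(\Gamma)$, so $a^{-1}\in L_{N\times N}^\infty(\Gamma)$. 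Your first paragraph already contains this reduction; the fix is simply to feed it with $b^{-1}$ obtained from the one-sided approximation, rather than trying to perturb $a_\eps$ back to $a$.
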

\begin{proof}
The proof is analogous to the proof of \cite[Theorem~3.18]{LS87}.
Suppose $aP+bQ$ is $d$-normal on $L_N^{p(\cdot)}(\Gamma,\varrho)$.
By Lemma~\ref{le:approx2}, for every $\eps>0$ there exist
$a_\eps\in L_{N\times N}^\infty(\Gamma)$ such that
$a_\eps^{-1}\in L_{N\times N}^\infty(\Gamma)$ and
$\|a-a_\eps\|_{L_{N\times N}^\infty(\Gamma)}<\eps$. Since
\[
\|(aP+bQ)-(a_\eps P+bQ)\|_{\cB(L_N^{p(\cdot)}(\Gamma,\varrho))}
\le
\|a-a_\eps\|_{L_{N\times N}^\infty(\Gamma)}
\|P\|_{\cB(L_N^{p(\cdot)}(\Gamma,\varrho))}
=O(\eps)
\]
as $\eps\to 0$, from Theorem~\ref{th:stability} it follows
that $\eps>0$ can be chosen so small that $a_\eps P+bQ$ is $d$-normal
on $L_N^{p(\cdot)}(\Gamma,\varrho)$, too. Since
$a_\eps^{-1}\in L_{N\times N}^\infty(\Gamma)$, the operator $a_\eps I$ is
invertible on $L_N^{p(\cdot)}(\Gamma,\varrho)$. From
Theorem~\ref{th:Atkinson-Yood} it follows that the operator
$P+a_\eps^{-1}bQ=a_\eps^{-1}(a_\eps P+ bQ)$ is $d$-normal. By
Lemma~\ref{le:d-normality}, $b^{-1}a_\eps$ belongs to $L_{N\times N}^\infty(\Gamma)$.
Hence $b^{-1}=b^{-1}a_\eps a_\eps^{-1}\in L_{N\times N}^\infty(\Gamma)$.

Furthermore, $b^{-1}aP+Q=b^{-1}(aP+bQ)$ and the operator $b^{-1}aP+Q$
is $d$-normal on $L_N^{p(\cdot)}(\Gamma,\varrho)$. By Lemma~\ref{le:d-normality},
$a^{-1}b\in L_{N\times N}^\infty(\Gamma)$. Then $a^{-1}=a^{-1}bb^{-1}$
belongs to $L_{N\times N}^\infty(\Gamma)$. That is, we have shown
that if $aP+bQ$ is $d$-normal on $L_N^{p(\cdot)}(\Gamma,\varrho)$, then
$a^{-1},b^{-1}\in L_{N\times N}^\infty(\Gamma)$.

If $aP+bQ$ is $n$-normal on $L_N^{p(\cdot)}(\Gamma,\varrho)$, then arguing
as above, we conclude that the operator $P+a_\eps^{-1}bQ$ is $n$-normal on
$L_N^{p(\cdot)}(\Gamma,\varrho)$. By Lemma~\ref{le:duality-matrix},
the operator $(a_\eps^{-1}b)^TP+Q$ is $d$-normal on
$L_N^{p^*(\cdot)}(\Gamma,\varrho^{-1})$. From Lemma~\ref{le:d-normality}
it follows that $[(a_\eps^{-1}b)^T]^{-1}\in L_{N\times N}^\infty(\Gamma)$.
Therefore, $b^{-1}=(a_\eps^{-1})^{-1}a_\eps^{-1}\in L_{N\times N}^\infty(\Gamma)$.
Furthermore, $b^{-1}aP+Q=b^{-1}(aP+bQ)$ and the operator $b^{-1}aP+Q=b^{-1}(aP+bQ)$
is $n$-normal on $L_N^{p(\cdot)}(\Gamma,\varrho)$. From Lemma~\ref{le:duality-matrix}
we get that the operator $P+(b^{-1}a)^TQ$ is $d$-normal on
$L_N^{p^*(\cdot)}(\Gamma,\varrho^{-1})$. Applying Lemma~\ref{le:d-normality} to
the operator $P+(b^{-1}a)^TQ$ acting on $L_N^{p^*(\cdot)}(\Gamma,\varrho^{-1})$,
we obtain $a^{-1}b\in L_{N\times N}^\infty(\Gamma)$. Thus
$a^{-1}=a^{-1}bb^{-1}\in L_{N\times N}^\infty(\Gamma)$.
\end{proof}
\section{Semi-Fredholmness and Fredholmness of $aP+bQ$ are equivalent}
\label{sec:SIO-matrix}
\subsection{Decomposition of piecewise continuous matrix functions}
Denote by $PC^0(\Gamma)$ the set of all piecewise continuous functions $a$
which have only a finite number of jumps and satisfy $a(t-0)=a(t)$ for all
$t\in\Gamma$. Let $C_{N\times N}(\Gamma)$ and $PC_{N\times N}^0(\Gamma)$ denote
the sets of $N\times N$ matrix functions with continuous entries  and with
entries in $PC^0(\Gamma)$, respectively. A matrix function
$a\in PC_{N\times N}(\Gamma)$ is said to be nonsingular if $\det a(t\pm 0)\ne 0$
for all $t\in\Gamma$.
\begin{lemma}[see \cite{CG81}, Chap.~VII, Lemma~2.2]
\label{le:decomposition}
Suppose $\Gamma$ is a rectifiable Jordan curve. If a matrix function
$f\in PC_{N\times N}^0(\Gamma)$ is nonsingular, then there exist an upper-triangular
nonsingular matrix function $g\in PC_{N\times N}^0(\Gamma)$ and nonsingular
matrix functions $c_1,c_2\in C_{N\times N}(\Gamma)$ such that $f=c_1gc_2$.
\end{lemma}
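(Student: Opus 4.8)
The plan is to reduce the factorization of a nonsingular $f \in PC_{N\times N}^0(\Gamma)$ to a normal form by successively ``continuizing'' the matrix column by column, peeling off a continuous factor at each stage and leaving a matrix whose remaining discontinuous behaviour is confined to fewer and fewer columns. Concretely, let $t_1,\dots,t_r$ be the (finitely many) jump points of $f$. At each $t_j$ the one-sided limits $f(t_j-0)=f(t_j)$ and $f(t_j+0)$ are both invertible, so the matrix $f(t_j+0)f(t_j-0)^{-1}$ is a well-defined invertible constant matrix attached to $t_j$. The idea is to build a continuous matrix function $c_2$ that ``carries'' all of these jump discrepancies: since the jump points are isolated, one can choose, near each $t_j$ and on a short arc to one side of it, a continuous interpolation between the identity and $f(t_j+0)f(t_j-0)^{-1}$, and set $c_2 \equiv I$ away from small neighbourhoods of the $t_j$'s. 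Then $f c_2^{-1}$ (or a suitable one-sided variant) has the same jumps cancelled, up to continuous error, and one is reduced to a genuinely \emph{continuous} nonsingular matrix function, which is trivially of the required form with $g=I$.

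The cleaner route, and the one I would actually carry out, is an induction on $N$ together with Gaussian elimination performed over the ring $PC^0(\Gamma)$. First I would note that a nonsingular $f \in PC_{N\times N}^0(\Gamma)$ has at least one entry in its first column that is nonsingular (does not degenerate) near each jump point — if not, a determinant argument shows $f$ would be singular somewhere. By multiplying on the left by a \emph{continuous} permutation-type matrix $c_1^{(1)} \in C_{N\times N}(\Gamma)$ (a rotation interpolating a permutation, chosen so as not to introduce discontinuities), bring a nonsingular entry to the $(1,1)$ position. Then use the first row to clear the rest of the first column: the elementary operations here are left-multiplication by matrices of the form $I + (\text{strictly lower triangular with } PC^0 \text{ entries})$, hence by nonsingular elements of $PC_{N\times N}^0(\Gamma)$; these are exactly the kind of factors that can be absorbed into the upper-triangular $g$ once everything is transposed appropriately, or kept on the other side. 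After this step $f$ has the block form $\operatorname{diag}$-like structure $\begin{bmatrix} \ast & \ast \\ 0 & f' \end{bmatrix}$ with $f' \in PC_{(N-1)\times(N-1)}^0(\Gamma)$ nonsingular, and the induction hypothesis finishes it. Tracking which elementary factors go left (continuous ones, into $c_1$), which stay in the middle (the triangular $PC^0$ part, into $g$), and pushing any residual continuous garbage to the right into $c_2$, is the bookkeeping that produces $f = c_1 g c_2$.

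The main obstacle, I expect, is not the algebra of the elimination but ensuring at every stage that (i) the entry we pivot on is invertible as an element of $PC^0(\Gamma)$ — i.e. that $a(t-0)=a(t)$ never vanishes and $a(t+0)\neq 0$ for all $t$, which is needed so that $1/a \in PC^0(\Gamma)$ and the elementary row operations stay inside the ring — and (ii) that the factors we choose to split off on the left are genuinely \emph{continuous}, not merely piecewise continuous. Point (i) is handled by the nonsingularity hypothesis plus a compactness/finiteness argument (only finitely many jump points, and at each the two one-sided limits are invertible, so on a suitable punctured neighbourhood the relevant scalar minors stay away from $0$); one may need to pass to a refinement where the arcs between consecutive $t_j$'s are treated separately. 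Point (ii) is where the ``logarithmic-whirl''-style care does \emph{not} enter — the curve is only assumed rectifiable here — and instead one simply uses that a jump of a matrix function at an isolated point can be written as (continuous factor)$\,\cdot\,$(matrix that is locally constant $=I$ to one side and $=$ the jump-ratio to the other side), the latter being absorbable into $g$ after conjugating by the permutation already placed in $c_1$. I would lean on \cite[Chap.~VII]{CG81} for the precise form of these local model factorizations at a single jump and then globalize by multiplying the local pieces together, using that the jump points are finitely many and pairwise disjoint so the local corrections have disjoint supports.
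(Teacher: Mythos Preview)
The paper does not prove this lemma; it simply quotes \cite[Chap.~VII, Lemma~2.2]{CG81}. So there is no in-paper argument to compare against, and your closing remark that you would ``lean on \cite[Chap.~VII]{CG81}'' for the local model factorizations is an appeal to the very source being cited.

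Both of your sketched routes have genuine gaps. In the first paragraph, a \emph{continuous} $c_2$ cannot cancel any jump of $f$: since $c_2(t_j-0)=c_2(t_j+0)$, the jump ratio of $fc_2^{-1}$ at $t_j$ equals that of $f$, so $fc_2^{-1}$ is continuous only if $f$ already was; the interpolation you describe actually produces a $c_2$ that itself jumps at each $t_j$, hence $c_2\notin C_{N\times N}(\Gamma)$. In the Gaussian-elimination route, two steps fail as written. First, the pivot: no single entry of the first column need be invertible in $PC^0(\Gamma)$ --- already for continuous $f$ the first column may be $(\cos\theta,\sin\theta)^\top$ on an arc --- and your proposed continuous left factor must, at each $t_j$, make \emph{both} one-sided first columns have nonzero first coordinate with the \emph{same} value $c_1^{(1)}(t_j)$, globally on the circle; you have not said how. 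Second, the bookkeeping: the row operations clearing below the $(1,1)$ entry are left multiplications by unit \emph{lower}-triangular matrices with $PC^0$ entries, and such factors are neither continuous (so cannot go into $c_1$) nor upper triangular (so cannot go into $g$); ``transposing'' swaps left/lower for right/upper, but then you are clearing rows rather than columns and the remainder is lower block-triangular. The argument in \cite{CG81} bypasses both issues by working with the jump ratios directly: at each $t_j$ choose $R_j$ conjugating $f(t_j-0)^{-1}f(t_j+0)$ to upper-triangular form (Schur) and set $L_j$ so that $L_jf(t_j-0)R_j$ is upper triangular; then $L_jf(t_j\pm 0)R_j$ are \emph{simultaneously} upper triangular, the finitely many constants $L_j,R_j$ are interpolated to continuous invertible matrix functions on $\Gamma$ using the connectedness of $GL_N(\C)$, and the remaining continuous variation on each arc is absorbed into $c_1$ using the connectedness of the invertible upper-triangular group.
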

\subsection{Compactness of commutators}
\begin{lemma}\label{le:compactness}
Let $\Gamma$ be a Carleson Jordan curve, let $p:\Gamma\to(1,\infty)$ be a
continuous function satisfying {\rm(\ref{eq:Dini-Lipschitz})}, and let $\varrho$
be a Khvedelidze weight satisfying {\rm(\ref{eq:Khvedelidze})}.
If $c$ belongs to $C_{N\times N}(\Gamma)$, then the commutators $cP-PcI$
and $cQ-QcI$ are compact on $L_N^{p(\cdot)}(\Gamma,\varrho)$.
\end{lemma}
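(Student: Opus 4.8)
The plan is to reduce the matrix statement to the scalar case by exploiting the fact that, for a continuous scalar function $c$, the commutator $cS-ScI$ is compact on $L^{p(\cdot)}(\Gamma,\varrho)$, and then to lift this to matrices entrywise. First I would recall that $P=(I+S)/2$ and $Q=(I-S)/2$, so it suffices to prove that $cS-ScI$ is compact; the statements for $cP-PcI$ and $cQ-QcI$ then follow by taking linear combinations with the identity, since $cI-cI=0$. For the scalar case, the standard approach is an approximation argument: the set of rational functions $\cR$ without poles on $\Gamma$, or at least the restrictions to $\Gamma$ of functions analytic in a neighbourhood of $\Gamma$, is dense in $C(\Gamma)$ in the sup norm. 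For $c\in\cR$ one checks directly that $cS-ScI$ has a weakly singular (in fact bounded) kernel,
\[
((cS-ScI)f)(t)=\frac{1}{\pi i}\int_\Gamma\frac{c(t)-c(\tau)}{\tau-t}f(\tau)\,d\tau,
\]
and the difference quotient $\big(c(t)-c(\tau)\big)/(\tau-t)$ is continuous on $\Gamma\times\Gamma$ (here one uses that $c$ is a rational function with no poles on $\Gamma$, so it extends analytically across $\Gamma$); hence this is an integral operator with continuous kernel on the compact set $\Gamma\times\Gamma$, which is compact on $L^1(\Gamma)$, and by Lemma~\ref{le:embedding} it maps $L^{p(\cdot)}(\Gamma,\varrho)$ continuously into $L^{1+\eps}(\Gamma)$, so one needs a boundedness-plus-compactness packaging on the variable-exponent space itself.

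More precisely, the cleanest route is the classical density trick: for general $c\in C(\Gamma)$, choose $c_n\in\cR$ (or $c_n$ analytic near $\Gamma$) with $\|c-c_n\|_{L^\infty(\Gamma)}\to 0$. Then
\[
\|(cS-ScI)-(c_nS-Sc_nI)\|_{\cB(L^{p(\cdot)}(\Gamma,\varrho))}
\le 2\|c-c_n\|_{L^\infty(\Gamma)}\,\|S\|_{\cB(L^{p(\cdot)}(\Gamma,\varrho))}\to 0,
\]
where the norm of $S$ is finite by Theorem~\ref{th:KPS}. Since each $c_nS-Sc_nI$ is compact and the compact operators form a closed ideal in $\cB(L^{p(\cdot)}(\Gamma,\varrho))$, the limit $cS-ScI$ is compact. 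The one point requiring care is the compactness of $c_nS-Sc_nI$ on the variable-exponent space rather than on $L^{1+\eps}(\Gamma)$; this follows because $c_nS-Sc_nI$ is an integral operator with a kernel $k_n(t,\tau)$ that is bounded and continuous on $\Gamma\times\Gamma$, hence can be uniformly approximated by degenerate (finite-rank) kernels $\sum_i u_i(t)v_i(\tau)$ with $u_i,v_i$ continuous; the corresponding finite-rank operators converge in $\cB(L^{p(\cdot)}(\Gamma,\varrho))$ to $c_nS-Sc_nI$ because on the compact curve $L^{p(\cdot)}(\Gamma,\varrho)\hookrightarrow L^1(\Gamma)$ and $L^\infty(\Gamma)\hookrightarrow L^{p(\cdot)}(\Gamma,\varrho)$ with the Hölder inequality of Theorem~\ref{th:Hoelder} controlling the integral $\int_\Gamma |v_i(\tau)f(\tau)|\,|d\tau|$; thus $c_nS-Sc_nI$ is a norm-limit of finite-rank operators and hence compact.

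Finally, the passage to matrices is purely formal: write $c=(c_{ij})\in C_{N\times N}(\Gamma)$, and note that $cP-PcI$ acts on a column $f=(f_1,\dots,f_N)$ with $i$-th component $\sum_j (c_{ij}P - Pc_{ij}I)f_j$, i.e. $cP-PcI$ is the operator matrix $\big(c_{ij}P-Pc_{ij}I\big)_{i,j}$ acting on $L_N^{p(\cdot)}(\Gamma,\varrho)$. Each entry is compact on $L^{p(\cdot)}(\Gamma,\varrho)$ by the scalar case, and a block operator on a finite direct sum all of whose entries are compact is itself compact; the same argument applies verbatim to $cQ-QcI$. The main obstacle, and the step deserving the most attention, is establishing compactness of the commutator with a rational (or locally analytic) coefficient on the variable-exponent space $L^{p(\cdot)}(\Gamma,\varrho)$ itself—one must not simply cite compactness on $L^{1+\eps}(\Gamma)$, but genuinely produce a finite-rank approximation in the operator norm of $\cB(L^{p(\cdot)}(\Gamma,\varrho))$, using the two-sided embeddings with $L^1$ and $L^\infty$ on the compact curve together with the Hölder inequality of Theorem~\ref{th:Hoelder}.
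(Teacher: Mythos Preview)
Your argument is correct. The paper itself does not give a proof at all: it simply records that the lemma follows from Theorem~\ref{th:KPS} (boundedness of $S$) together with \cite[Lemma~6.5]{Karlovich03}, so your write-up is strictly more self-contained than what appears here. The route you take---reduce to the scalar commutator $cS-ScI$, approximate $c$ uniformly by rational functions, observe that for rational $c$ the commutator is an integral operator with continuous kernel, and then approximate that kernel in sup norm by degenerate kernels to get finite-rank approximants in $\cB(L^{p(\cdot)}(\Gamma,\varrho))$---is the classical one and is essentially what lies behind the cited lemma in \cite{Karlovich03}.

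Two small points you might make explicit for completeness. First, the density of $\cR$ in $C(\Gamma)$ that you invoke is the Hartogs--Rosenthal theorem, applicable because a rectifiable curve has zero planar Lebesgue measure. Second, the embedding $L^\infty(\Gamma)\hookrightarrow L^{p(\cdot)}(\Gamma,\varrho)$ that you use to push the sup-norm kernel estimate back into the operator norm on $L^{p(\cdot)}(\Gamma,\varrho)$ amounts to $\varrho\in L^{p(\cdot)}(\Gamma)$; this is indeed a consequence of the strict inequality $\lambda_k>-1/p(t_k)$ contained in \eqref{eq:Khvedelidze}, and it is worth saying so since for a general weight this embedding can fail. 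With those two remarks added, your proof stands on its own.
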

This statement follows from Theorem~\ref{th:KPS} and \cite[Lemma~6.5]{Karlovich03}.
\subsection{Equivalence of semi-Fredholmness and Fredholmness of $aP+bQ$}
\begin{theorem}\label{th:criterion-Fredholmness-matrix}
Let $\Gamma$ be a Carleson Jordan curve satisfying the logarithmic whirl
condition {\rm(\ref{eq:spiralic})} at each point $t\in\Gamma$, let
$p:\Gamma\to(1,\infty)$ be a continuous function satisfying
{\rm(\ref{eq:Dini-Lipschitz})}, and let $\varrho$ be a Khvedelidze weight
satisfying {\rm(\ref{eq:Khvedelidze})}. If $a,b\in PC_{N\times N}^0(\Gamma)$, then
$aP+bQ$ is semi-Fredholm on $L_N^{p(\cdot)}(\Gamma,\varrho)$
if and only if it is Fredholm on $L_N^{p(\cdot)}(\Gamma,\varrho)$.
\end{theorem}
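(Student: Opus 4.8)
The plan is to reduce the matrix case to the scalar results of Section~\ref{sec:closed-range} by means of the triangular factorization of Lemma~\ref{le:decomposition}, combined with the duality and compactness tools already assembled. First I would observe that if $aP+bQ$ is semi-Fredholm on $L_N^{p(\cdot)}(\Gamma,\varrho)$, then by Theorem~\ref{th:semi-Fredholmness-necessity} both $a^{-1}$ and $b^{-1}$ lie in $L_{N\times N}^\infty(\Gamma)$; moreover, since $a,b\in PC_{N\times N}^0(\Gamma)$, the essential invertibility forces $\det a(t\pm 0)\ne 0$ and $\det b(t\pm 0)\ne 0$ for every $t\in\Gamma$, so $a$ and $b$ are nonsingular piecewise continuous matrix functions. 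Next, since $a I$ is invertible on $L_N^{p(\cdot)}(\Gamma,\varrho)$, Theorem~\ref{th:Atkinson-Yood}(c) shows $aP+bQ$ is semi-Fredholm (resp. Fredholm) if and only if $P+a^{-1}bQ$ is, so it suffices to treat an operator $P+cQ$ with $c:=a^{-1}b\in PC_{N\times N}^0(\Gamma)$ nonsingular.

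Apply Lemma~\ref{le:decomposition} to write $c=c_1 g c_2$ with $c_1,c_2\in C_{N\times N}(\Gamma)$ nonsingular and $g\in PC_{N\times N}^0(\Gamma)$ nonsingular and upper-triangular. Using Lemma~\ref{le:compactness}, the multiplication operators $c_1 I$ and $c_2 I$ commute with $P$ and $Q$ modulo compact operators, and $c_1 I$, $c_2 I$ are invertible; so $P+cQ = P + c_1 g c_2 Q$ equals $c_1(P + g Q)c_2$ plus a compact operator, up to multiplication on the left and right by the invertible operators $c_1 I, c_2^{-1}I$. By Theorem~\ref{th:Atkinson-Yood}(a),(c), $P+cQ$ is semi-Fredholm (resp. Fredholm) if and only if $P+gQ$ is. Thus we are reduced to an \emph{upper-triangular} coefficient $g$, where one can peel off the diagonal entries one at a time: with $g$ upper triangular, $P+gQ$ acting on $L_N^{p(\cdot)}(\Gamma,\varrho)=L^{p(\cdot)}(\Gamma,\varrho)\dot{+}L_{N-1}^{p(\cdot)}(\Gamma,\varrho)$ has block form $\left[\begin{smallmatrix} P+g_{11}Q & * \\ 0 & \text{(upper-triangular of size }N-1)\end{smallmatrix}\right]$, so Theorem~\ref{th:Spitkovsky}(a),(b) lets one induct on $N$, provided the scalar operators $P+g_{jj}Q$ (equivalently, after multiplying by the invertible $g_{jj}I$, the operators $g_{jj}^{-1}P+Q$) are Fredholm exactly when they are semi-Fredholm.

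This last scalar fact is precisely what Theorem~\ref{th:criterion-closedness} together with Theorem~\ref{th:criterion-Fredholmness-scalar} deliver: for a scalar $PC(\Gamma)$ function with finitely many jumps and nonvanishing one-sided limits, the image of $aP+Q$ is closed if and only if the Fredholm condition \eqref{eq:Fredholm} holds, which in turn is equivalent to Fredholmness; so semi-Fredholm (which entails closed image) already forces Fredholm in the scalar case. Feeding this into the induction, each diagonal block is Fredholm, hence by Theorem~\ref{th:Spitkovsky}(b) the full upper-triangular operator $P+gQ$ is Fredholm as soon as it is semi-Fredholm. Tracing the equivalences back through the factorization and the reduction $aP+bQ\leftrightarrow P+a^{-1}bQ$ completes the proof. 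The main obstacle is the bookkeeping in the reduction to the upper-triangular case: one must check that the passage $c=c_1 g c_2$, after absorbing the continuous factors via Lemma~\ref{le:compactness}, genuinely preserves \emph{each} of the properties ($n$-normal, $d$-normal, Fredholm) — this is exactly the content of parts (a) and (c) of Theorem~\ref{th:Atkinson-Yood}, but it must be invoked carefully because a compact perturbation and a one-sided invertible multiplier are being used simultaneously; once that is in place, the block-triangular induction via Theorem~\ref{th:Spitkovsky} is routine.
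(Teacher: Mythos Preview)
Your overall strategy---invoke Theorem~\ref{th:semi-Fredholmness-necessity}, reduce to a single coefficient, factor via Lemma~\ref{le:decomposition} into continuous times upper-triangular, then peel off scalar diagonal blocks using Theorem~\ref{th:Spitkovsky} and the scalar results of Section~\ref{sec:closed-range}---is exactly the paper's approach. The induction via Theorem~\ref{th:Spitkovsky} and the scalar step (closed range $\Rightarrow$ \eqref{eq:Fredholm} $\Rightarrow$ Fredholm) are handled correctly.

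There is, however, a genuine error in your reduction from $P+cQ$ to $P+gQ$. You claim that $P+c_1gc_2Q$ equals $c_1(P+gQ)c_2$ plus a compact operator (possibly after multiplying by invertible $c_1I$, $c_2^{-1}I$). This is false: modulo compacts one has
\[
c_1(P+gQ)c_2 \equiv c_1c_2\,P + c_1gc_2\,Q = c_1c_2\,P + cQ,
\]
and $(c_1c_2-I)P$ is \emph{not} compact unless $c_1c_2=I$. Conjugating by $c_1^{-1}I$ and $c_2^{-1}I$ does not help, for the same reason: the identity coefficient on the $P$ side does not absorb the continuous factors. The point is that invertible \emph{multiplication} operators alone cannot mediate between $P+cQ$ and $P+gQ$; one needs an auxiliary \emph{Fredholm} operator built from $P$ and $Q$ with continuous coefficients. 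The paper writes (working with $b^{-1}a=c_1gc_2$ and the form $gP+Q$)
\[
aP+bQ = bc_1\big[(gP+Q)(Pc_2I+Qc_1^{-1}I) + g(c_2P-Pc_2I) + (c_1^{-1}Q-Qc_1^{-1}I)\big],
\]
and then shows that $Pc_2I+Qc_1^{-1}I$ is Fredholm (it is regularized by $c_2^{-1}P+c_1Q$ modulo compacts, by Lemma~\ref{le:compactness} and Theorem~\ref{th:regularization}). With this extra Fredholm factor in place, Theorem~\ref{th:Atkinson-Yood} gives that $aP+bQ$ and $gP+Q$ share each of the properties $n$-normal/$d$-normal/Fredholm. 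Your proof is completed once you replace the incorrect sentence by this (or the analogous) identity.

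One minor omission: in the block-triangular induction you should, as the paper does, treat the $n$-normal and $d$-normal cases separately (Theorem~\ref{th:Spitkovsky}(a) with $A_{21}=0$ yields $A_{11}$ $n$-normal but $A_{22}$ $d$-normal), peeling from the top-left in the first case and from the bottom-right in the second.
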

\begin{proof}
The idea of the proof is borrowed from \cite[Theorem~3.1]{Spitkovsky92}.
Only the necessity portion of the theorem is nontrivial. If $aP+bQ$ is
semi-Fredholm, then $a$ and $b$ are nonsingular by
Theorem~\ref{th:semi-Fredholmness-necessity}. Hence $b^{-1}a$ is nonsingular.
In view of Lemma~\ref{le:decomposition}, there exist an upper-triangular
nonsingular matrix function $g\in PC_{N\times N}^0(\Gamma)$ and continuous
nonsingular matrix functions $c_1$, $c_2$ such that $b^{-1}a=c_1gc_2$.
It is easy to see that
\begin{equation}\label{eq:criterion-Fredholmness-matrix-1}
aP+bQ=bc_1\big[(gP+Q)(Pc_2I+Qc_1^{-1}I)+g(c_2P-Pc_2I)+(c_1^{-1}Q-Qc_1^{-1}I)\big].
\end{equation}
From Lemma~\ref{le:compactness} it follows that the operators $c_2P-Pc_2I$
and $c_1^{-1}Q-Qc_1^{-1}I$ are compact on $L_N^{p(\cdot)}(\Gamma,\varrho)$ and
\[
(Pc_2I+Qc_1^{-1}I)(c_2^{-1}P+c_1Q)=I+K_1,
\quad
(c_2^{-1}P+c_1Q)(Pc_2I+Qc_1^{-1}I)=I+K_2,
\]
where $K_1$ and $K_2$ are compact operators on $L_N^{p(\cdot)}(\Gamma,\varrho)$.
In view of these equalities, by Theorem~\ref{th:regularization},
the operator $Pc_2I+Qc_1^{-1}I$ is Fredholm on $L_N^{p(\cdot)}(\Gamma,\varrho)$.
Obviously, the operator $bc_1I$ is invertible because $bc_1$ is nonsingular.
From (\ref{eq:criterion-Fredholmness-matrix-1}) and Theorem~\ref{th:Atkinson-Yood}
it follows that $aP+bQ$ is $n$-normal, $d$-normal, Fredholm if and only if
$gP+Q$ has the corresponding property.

Let $g_j$, $j\in\{1,\dots,N\}$, be the elements of the main diagonal of the
upper-triangular matrix function $g$. Since $g$ is nonsingular, all $g_j$
are nonsingular, too. Assume for definiteness that $gP+Q$ is $n$-normal
on $L_N^{p(\cdot)}(\Gamma,\varrho)$. By Theorem~\ref{th:Spitkovsky}(a),
the operator $g_1P+Q$ is $n$-normal on $L^{p(\cdot)}(\Gamma,\varrho)$.
Hence the image of $g_1P+Q$ is closed. From Theorem~\ref{th:criterion-closedness}
it follows that (\ref{eq:Fredholm}) is fulfilled with $g_1$ in place of $a$.
Therefore, the operator $g_1P+Q$ is Fredholm on $L^{p(\cdot)}(\Gamma,\varrho)$
due to Theorem~\ref{th:criterion-Fredholmness-scalar}. Applying
Theorem~\ref{th:Spitkovsky}(b), we deduce that the operator $g^{(1)}P+Q$
is $n$-normal on $L_{N-1}^{p(\cdot)}(\Gamma,\varrho)$, where $g^{(1)}$ is
the $(N-1)\times (N-1)$ upper-triangular nonsingular matrix function obtained from
$g$ by deleting the first column and the first row.
Arguing as before with $g^{(1)}$ in place of $g$, we conclude that $g_2P+Q$ is
Fredholm on $L^{p(\cdot)}(\Gamma,\varrho)$ and $g^{(2)}P+Q$ is $n$-normal
on $L_{N-2}^{p(\cdot)}(\Gamma,\varrho)$, where $g^{(2)}$ is the
$(N-2)\times(N-2)$ upper-triangular nonsingular matrix function obtained
from $g^{(1)}$ by deleting the first column and the first row. Repeating
this procedure $N$ times, we can show that all operators $g_jP+Q$,
$j\in\{1,\dots,N\}$, are Fredholm on $L^{p(\cdot)}(\Gamma,\varrho)$.

If the operator $gP+Q$ is $d$-normal, then we can prove in a similar fashion
that all operators $g_jP+Q$, $j\in\{1,\dots,N\}$, are Fredholm on
$L^{p(\cdot)}(\Gamma,\varrho)$. In this case we start with $g_N$ and delete
the last column and the last row of the matrix $g^{(j-1)}$ on the $j$-th step
(we assume that $g^{(0)}=g$).

Since all operators $g_jP+Q$ are Fredholm on $L^{p(\cdot)}(\Gamma,\varrho)$,
from Theorem~\ref{th:Spitkovsky}(b) we obtain that the operator $gP+Q$ is Fredholm
on $L_N^{p(\cdot)}(\Gamma,\varrho)$. Hence $aP+bQ$ is Fredholm on
$L_N^{p(\cdot)}(\Gamma,\varrho)$, too.
\end{proof}
\section{Semi-Fredholmness and Fredholmness are equivalent for arbitrary
operators in $\alg(S,PC,L_N^{p(\cdot)}(\Gamma,\varrho))$}
\label{sec:algebra}
\subsection{Linear dilation}
The following statement shows that the semi-Fredholmness of an operator
in a dense subalgebra of $\alg(S,PC,L_N^{p(\cdot)}(\Gamma,\varrho))$ is
equivalent to the semi-Fredholmness of a simpler operator $aP+bQ$
with coefficients of $a,b$ of larger size.
\begin{lemma}\label{le:dilation}
Suppose $\Gamma$ is a Carleson Jordan curve, $p:\Gamma\to(1,\infty)$ is a
continuous function satisfying {\rm (\ref{eq:Dini-Lipschitz})}, and $\varrho$
is a Khvedelidze weight satisfying {\rm (\ref{eq:Khvedelidze})}.
Let
\[
A=\sum_{i=1}^k A_{i1}A_{i2}\dots A_{ir},
\]
where $A_{ij}=a_{ij}P+b_{ij}Q$ and all $a_{ij},b_{ij}$ belong to
$PC_{N\times N}^0(\Gamma)$. Then there exist functions
$a,b\in PC_{D\times D}^0(\Gamma)$, where $D:=N(k(r+1)+1)$,
such that $A$ is $n$-normal ($d$-normal, Fredholm) on
$L_N^{p(\cdot)}(\Gamma,\varrho)$ if and only
if $aP+bQ$ is $n$-normal (resp. $d$-normal, Fredholm) on
$L_{D}^{p(\cdot)}(\Gamma,\varrho)$.
\end{lemma}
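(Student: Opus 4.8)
The plan is to realize the operator $A$ on $L_N^{p(\cdot)}(\Gamma,\varrho)$ as a principal corner of a single operator of the form $aP+bQ$ acting on a larger direct sum, in such a way that Theorem~\ref{th:Spitkovsky} lets us transfer the $n$-normality, $d$-normality, and Fredholmness properties back and forth. First I would recall the standard linear dilation trick (used in \cite{GKS93} and \cite{LS87}): a product $A_{i1}A_{i2}\dots A_{ir}$ of operators $A_{ij}=a_{ij}P+b_{ij}Q$ can be written, up to multiplication by invertible operators and addition of compact operators, as a corner of a block operator whose blocks are each of the form (continuous matrix function)$\cdot P\ +\ $(continuous matrix function)$\cdot Q$ on a space $L_M^{p(\cdot)}(\Gamma,\varrho)$ with $M=N(r+1)$; then the sum over $i=1,\dots,k$ is handled by a further direct sum, giving total size $D=N(k(r+1)+1)$, the extra $N$ accounting for the diagonal block on which $A$ itself sits.

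The key steps, in order, would be: (1) For a single factor-product $B_i:=A_{i1}\cdots A_{ir}$, introduce the $(r+1)\times(r+1)$ block upper-triangular operator matrix $T_i$ on $L_{N(r+1)}^{p(\cdot)}(\Gamma,\varrho)$ with $I$ on the diagonal except for the $(1,1)$ entry, and with the $A_{ij}$ placed so that Gaussian elimination (multiplication on left and right by block-triangular operators with identity diagonal, which are invertible) produces the block $\mathrm{diag}(B_i,I,\dots,I)$; this is the purely algebraic heart of linear dilation and requires only that each $A_{ij}$ is bounded, which holds by Theorem~\ref{th:KPS}. (2) Verify that each entry of $T_i$ has the form $cP+dQ$ with $c,d\in PC_{N\times N}^0(\Gamma)$ (indeed $a_{ij}P+b_{ij}Q$, $I$, or $0$), so that $T_i$ itself can be written as $\alpha_i P+\beta_i Q$ with $\alpha_i,\beta_i\in PC_{N(r+1)\times N(r+1)}^0(\Gamma)$ — here one uses that $P,Q$ are complementary idempotents (Lemma~\ref{le:basic}) so that a block operator whose every entry is a linear combination of $P$ and $Q$ with matrix coefficients is globally of the form $\alpha P+\beta Q$. (3) Assemble the direct sum over $i$: form the block-diagonal operator $\mathrm{diag}(T_1,\dots,T_k)$ together with one further $L_N^{p(\cdot)}(\Gamma,\varrho)$-block carrying $A=\sum_i B_i$; combine the $k$ copies of $B_i$ into the single entry $A$ by another elimination step using block-triangular invertible operators, producing a $D\times D$ operator matrix, all of whose entries are still of the form (matrix in $PC^0$)$\cdot P$ + (matrix in $PC^0$)$\cdot Q$, hence equal to $aP+bQ$ for some $a,b\in PC_{D\times D}^0(\Gamma)$, and such that $aP+bQ$ is (after the invertible-operator conjugations) of the block form $\mathrm{diag}(A,I,\dots,I)$ modulo nothing — the eliminations are exact, not just modulo compacts. (4) Apply Theorem~\ref{th:Atkinson-Yood}: since the conjugating operators are invertible, $aP+bQ$ and $\mathrm{diag}(A,I,\dots,I)$ share the properties of being $n$-normal, $d$-normal, or Fredholm; and $\mathrm{diag}(A,I,\dots,I)$ has a given such property if and only if $A$ does, because the identity blocks are invertible (Theorem~\ref{th:Atkinson-Yood} again, or directly from the definitions of the defect numbers on a direct sum). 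Chaining these equivalences yields the claim.

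The main obstacle I anticipate is step (1)–(2): carrying out the linear dilation so that every block of the resulting operator matrix is honestly of the form $cP+dQ$ with $c,d\in PC^0_{N\times N}(\Gamma)$, rather than an arbitrary bounded operator, and then recognizing the whole block matrix as a single $aP+bQ$. The subtlety is bookkeeping: one must place the factors $A_{ij}$ and the auxiliary identity operators in the block matrix so that the Gaussian elimination uses only conjugation by operators that are (a) invertible and (b) themselves block-triangular with identity diagonal — so the eliminations introduce no extra error terms at all — while keeping the diagonal entries that will survive as exactly $I$ (or, in the one distinguished slot, $A$). Since $P$ and $Q$ are complementary idempotents by Lemma~\ref{le:basic}, a block operator each of whose entries lies in $\mathrm{span}\{P,Q\}$ with $N\times N$ matrix coefficients is automatically of the form $\alpha P+\beta Q$ with the coefficient matrices built blockwise from the entrywise coefficients; verifying this reduction and checking that $\alpha,\beta$ inherit membership in $PC^0_{D\times D}(\Gamma)$ is routine once the block layout is fixed. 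The sizes work out because each $B_i$ needs an $(r+1)$-fold dilation and the $k$ summands plus the distinguished $A$-block contribute the factor $k(r+1)+1$, matching $D=N(k(r+1)+1)$.
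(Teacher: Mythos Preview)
Your overall strategy matches the paper's: realize $A$ as a block of a single $D\times D$ operator of the form $aP+bQ$, relate it via invertible conjugation to $\mathrm{diag}(I,\dots,I,A)$, and transfer the $n$-normal/$d$-normal/Fredholm properties by Theorem~\ref{th:Atkinson-Yood} and Theorem~\ref{th:Spitkovsky}(b). But step~(1) as you describe it cannot work. If $T_i$ is block \emph{upper}-triangular with diagonal $(*,I,\dots,I)$, then its lower-left block is zero and its lower-right $r\times r$ block is invertible; the Schur complement with respect to that block is simply the $(1,1)$ entry $*$. Equivalently, conjugation by block-triangular operators with identity diagonal reduces such a $T_i$ to $\mathrm{diag}(*,I,\dots,I)$ with the \emph{same} $(1,1)$ entry. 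The superdiagonal $A_{ij}$'s never feed into the $(1,1)$ slot, so you can never produce $B_i=A_{i1}\cdots A_{ir}$ there unless you put it there to begin with---which defeats the purpose, since $B_i$ is not of the form $cP+dQ$.

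The fix, and what the paper actually does, is to use a block matrix that is \emph{not} triangular. The paper writes down a single identity (handling all products and the sum at once rather than in two stages)
\[
\left[\begin{array}{cc} I & 0 \\ W & I \end{array}\right]
\left[\begin{array}{cc} I & 0 \\ 0 & A \end{array}\right]
\left[\begin{array}{cc} Z & X \\ 0 & I \end{array}\right]
=
\left[\begin{array}{cc} Z & X \\ Y & O_N \end{array}\right],
\]
where $Z$ is the $kN(r+1)\times kN(r+1)$ upper-bidiagonal block matrix with $I_{kN}$'s on the diagonal and $B_\ell=\mathrm{diag}(A_{1\ell},\dots,A_{k\ell})$ on the superdiagonal, $X$ is a column of $O_N$'s and $-I_N$'s, $Y$ is a row of $I_N$'s and $O_N$'s, and $W$ is the row of partial products $A_{i1}\cdots A_{i\ell}$. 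The two outer factors on the left are invertible (block-triangular with invertible diagonal blocks $I$ and $Z$), and every block of the right-hand side is $O$, $\pm I$, or some $A_{ij}$, hence the right-hand side is of the form $aP+bQ$ with $a,b\in PC^0_{D\times D}(\Gamma)$. The crucial feature is the nonzero row $Y$ sitting \emph{below} the main diagonal; this is what makes the Schur complement come out to $A$ rather than to a single prescribed entry. With this corrected dilation your steps~(2)--(4) go through as written.
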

\begin{proof}
The idea of the proof is borrowed from \cite{GK71} (see also
\cite[Theorem~12.15]{BBKS99}). Denote by $O_s$ and $I_s$ the $s\times s$
zero and identity matrix, respectively. For $\ell=1,\dots,r$, let $B_\ell$
be the $kN\times kN$ matrix
\[
B_\ell=\mathrm{diag}(A_{1\ell},A_{2\ell},\dots,A_{k\ell}),
\]
then define the $kN(r+1)\times kN(r+1)$ matrix $Z$ by
\[
Z=\left[\begin{array}{ccccc}
I_{kN} & B_1    & O_{kN} & \dots & O_{kN} \\
O_{kN} & I_{kN} & B_2    & \dots & O_{kN} \\
\vdots & \vdots & \vdots & \ddots & \vdots\\
O_{kN} & O_{kN} & O_{kN} & \dots & B_r \\
O_{kN} & O_{kN} & O_{kN} & \dots & I_{kN}
\end{array}\right].
\]
Put
\[
X:=\mathrm{column}(\underbrace{O_N,\dots,O_N}_{kr},\underbrace{-I_N,\dots,-I_N}_k),
\quad
Y:=(\underbrace{I_N,\dots,I_N}_k,\underbrace{O_N,\dots,O_N}_{kr}).
\]
Define also $M_0=(\underbrace{I_N,\dots,I_N}_k)$ and for $\ell\in\{1,\dots,r\}$, let
\[
M_\ell:=(A_{11}A_{12}\dots A_{1\ell}\,,\,A_{21}A_{22}\dots A_{2\ell}\,,\,
\dots\,,\,A_{k1}A_{k2}\dots A_{k\ell}).
\]
Finally, put
\[
W:=(M_0,M_1,\dots,M_r).
\]
It can be verified straightforwardly that
\begin{equation}\label{eq:dilation}
\left[\begin{array}{cc}
I_{kN(r+1)} & O \\ W & I_N
\end{array}\right]
\left[\begin{array}{cc}
I_{kN(r+1)} & O \\ O & A
\end{array}\right]
\left[\begin{array}{cc}
Z & X \\ O & I_N
\end{array}\right]
=
\left[\begin{array}{cc}
Z & X \\ Y & O_N
\end{array}\right].
\end{equation}
It is clear that the outer terms on the left-hand side of (\ref{eq:dilation})
are invertible. Hence the middle factor of (\ref{eq:dilation}) and the
right-hand side of (\ref{eq:dilation}) are $n$-normal ($d$-normal, Fredholm)
only simultaneously in view of Theorem~\ref{th:Atkinson-Yood}. By
Theorem~\ref{th:Spitkovsky}(b), the operator $A$ is $n$-normal ($d$-normal,
Fredholm) if and only if the middle factor of (\ref{eq:dilation}) has the
corresponding property. Finally, note that the left-hand side of (\ref{eq:dilation})
has the form $aP+bQ$, where $a,b\in PC_{D\times D}^0(\Gamma)$.
\end{proof}
\subsection{Proof of Theorem~\ref{th:main}}
Obviously, for every $f\in PC(\Gamma)$ there exists a sequence $f_n\in PC^0(\Gamma)$
such that $\|f-f_n\|_{L^\infty(\Gamma)}\to 0$ as $n\to\infty$. Therefore,
for each operator $\alpha P+\beta Q$, where $\alpha=(\alpha_{rs})_{r,s=1}^N$,
$\beta=(\beta_{rs})_{rs=1}^N$ and $\alpha_{rs},\beta_{rs}\in PC(\Gamma)$
for all $r,s\in\{1,\dots,N\}$, there exist sequences
$\alpha^{(n)}=(\alpha_{rs}^{(n)})_{r,s=1}^N$, $\beta^{(n)}=(\beta_{rs}^{(n)})_{r,s=1}^N$
with $\alpha_{rs}^{(n)},\beta_{rs}^{(n)}\in PC^0(\Gamma)$ for all
$r,s\in\{1,\dots,N\}$ such that
\[
\begin{split}
&
\|(\alpha P+\beta Q)-(\alpha^{(n)}P+\beta^{(n)}Q)\|_{\cB(L_N^{p(\cdot)}(\Gamma,\varrho))}
\\
&\le
N\max_{1\le r,s\le N}\|\alpha_{rs}-\alpha_{rs}^{(n)}\|_{L^\infty(\Gamma)}
\|P\|_{\cB(L_N^{p(\cdot)}(\Gamma,\varrho))}
\\
&\quad
+N\max_{1\le r,s\le N}\|\beta_{rs}-\beta_{rs}^{(n)}\|_{L^\infty(\Gamma)}
\|Q\|_{\cB(L_N^{p(\cdot)}(\Gamma,\varrho))}=o(1)
\end{split}
\]
as $n\to\infty$.

Let $A\in\alg(S,PC;L_N^{p(\cdot)}(\Gamma,\varrho))$. Then there exists a
sequence of operators $A^{(n)}$ of the form
$\sum_{i=1}^kA_{i1}^{(n)}A_{i2}^{(n)}\dots A_{ir}^{(n)}$, where
$A_{ij}^{(n)}=a_{ij}^{(n)}P+b_{ij}^{(n)}Q$ and
$a_{ij}^{(n)},b_{ij}^{(n)}$ belong to $PC_{N\times N}(\Gamma)$,
such that $\|A-A^{(n)}\|_{\cB(L_N^{p(\cdot)}(\Gamma,\varrho))}\to 0$ as
$n\to\infty$. In view of what has been said above, without loss of generality,
we can assume that all matrix functions $a_{ij}^{(n)},b_{ij}^{(n)}$ belong
to $PC_{N\times N}^0(\Gamma)$.

If $A$ is semi-Fredholm, then for all sufficiently large $n$, the operators
$A^{(n)}$ are semi-Fredholm by Theorem~\ref{th:stability}. From Lemma~\ref{le:dilation}
it follows that for every semi-Fredholm operator
$\sum_{i=1}^kA_{i1}^{(n)}A_{i2}^{(n)}\dots A_{ir}^{(n)}$ there exist
$a^{(n)},b^{(n)}\in PC_{D\times D}^0(\Gamma)$, where $D:=N(k(r+1)+1)$,
such that $a^{(n)}P+b^{(n)}Q$ is semi-Fredholm on $L_{D}^{p(\cdot)}(\Gamma,\varrho)$.
By Theorem~\ref{th:criterion-Fredholmness-matrix}, $a^{(n)}P+b^{(n)}Q$
is Fredholm on $L_{D}^{p(\cdot)}(\Gamma,\varrho)$. Applying
Lemma~\ref{le:dilation} again, we conclude that
$\sum_{i=1}^kA_{i1}^{(n)}A_{i2}^{(n)}\dots A_{ir}^{(n)}$ is Fredholm on
$L_N^{p(\cdot)}(\Gamma,\varrho)$. Thus, for all sufficiently large $n$,
the operators $A^{(n)}$ are Fredholm. Lemma~\ref{le:stability} yields that
$A$ is Fredholm.
\qed

\end{document}